\theoremstyle{plain}
\newtheorem{theorem}{Theorem}[section]
\newtheorem{corollary}[theorem]{Corollary}
\newtheorem{proposition}[theorem]{Proposition}
\newtheorem{lemma}[theorem]{Lemma}
\newtheorem{thmx}{Theorem}
\theoremstyle{definition}
\newtheorem{remark}[theorem]{Remark}
\newtheorem{example}[theorem]{Example}
\newtheorem{definition}[theorem]{Definition}
\definecolor{fondo}{rgb}{0.898,0.996,0.898}
\title{Combinatorial Seshadri stratifications on normal toric varieties}
\author{Rocco Chiriv\`i}
\address{Dipartimento di Matematica e Fisica ``Ennio De Giorgi'', Universit\`a del Salento, Lecce, Italy}
\email{rocco.chirivi@unisalento.it}
\author{Martina Costa Cesari}
\address{Dipartimento di Matematica, Universit\`a di Bologna, Italy}
\email{martina.costacesari2@unibo.it}
\author{Xin Fang}
\address{Lehrstuhl f\"ur Algebra und Darstellungstheorie, RWTH Aachen, Pontdriesch 10-16, 52062 Aachen, Germany}
\email{xinfang.math@gmail.com}
\author{Peter Littelmann}
\address{Department Mathematik/Informatik, Universit\"at zu K\"oln, 50931, Cologne, Germany}
\email{peter.littelmann@math.uni-koeln.de}
\begin{document}
\maketitle
\begin{abstract}
We apply the theory of Seshadri stratifications to embedded toric varieties $X_P\subseteq \mathbb P(V)$ associated with a normal lattice polytope $P$. The approach presented here is purely combinatorial and 
completely independent of \cite{CFL}. In particular, we get a close connection between 
a certain class of triangulations of the polytope $P$, Seshadri stratifications of $X_P$ arising from torus orbit closures, and the associated degenerate semi-toric varieties.
In the last section we show that the approach here and the one in \cite{CFL} produce the same quasi-valuations
and hence the same degenerations of $X_P$.
\end{abstract}

\section*{Introduction}
\subsection*{Motivation}
One of the aims of the theory of Seshadri stratifications \cite{CFL} on embedded projective varieties $X\subseteq \mathbb P(V)$
is to use geometric data (subvarieties, vanishing order of functions) to construct a flat degeneration of $X$ into a union of toric varieties $X_0$.
The construction is motivated by the fact that, though
the degenerate variety $X_0$ is often more singular than $X$, its combinatorial structure makes it easier
to understand, and information about $X_0$ can often be ``lifted" to information on $X$. 

In this article, we focus on the case where $X$ is a toric variety.
Let $T\simeq \mathbb K^n$ be a torus with character lattice $M$, where $\mathbb K$ is an 
algebraically closed field of characteristic zero. Given a full dimensional normal lattice polytope 
$P\subseteq M_{\mathbb R}:=M\otimes_{\mathbb{Z}}\mathbb{R}$, we denote by $X_P\subseteq \mathbb P(V)$ the associated embedded toric variety (Section~\ref{sec:toric:variety}). The rich combinatorial structure of toric varieties makes it possible to present the theory
of Seshadri stratifications on toric varieties in a way
which uses merely the common combinatorial tools related to toric varieties: the polytope $P$, the weight monoid
$S$ of the homogeneous coordinate ring of $X_P$, and a certain class of triangulations of $P$. 
The exposition given here is completely independent  of \cite{CFL}.

\subsection*{Combinatorial Seshadri stratification} Let $A$ be the set of faces of $P$. For $\sigma\in A$ there is a unique $T$-orbits $O_\sigma\subseteq X_P$,
 denote by $X_\sigma$ its Zariski-closure in $X_P$.
For a collection of homogeneous $T$-eigenfunctions $f_\sigma\in\mathbb K[X_P]\setminus\{0\}$, indexed by $\sigma\in A$,
let $\mu_\sigma$ be the weight of $f_\sigma$ and set $m_\sigma=\deg f_\sigma$.
A \emph{combinatorial Seshadri stratification on $X_P$} is a collection of such pairs  $(X_\sigma,f_\sigma)_{\sigma\in A}$,
fulfilling the following compatibility condition: 
\begin{center}
for all $\sigma\in A$, the rational weight $-\frac{\mu_\sigma}{m_\sigma}$  lies in the relative interior of the face $\sigma$.
\end{center}

Given a combinatorial Seshadri stratification on $X_P$, for any fixed maximal chain $\mathfrak{C}$ in $A$, the set 
$\mathbb B_\mathfrak C=\{(m_\sigma,\mu_\sigma)\mid \sigma\in \mathfrak C\}$
turns out to be a basis of $\mathbb{Q}\oplus M_{\mathbb{Q}}$. Just using linear algebra,
we construct  for every maximal chain $\mathfrak C\subseteq A$
a valuation $\nu_\mathfrak C:\mathbb K[X_P]\setminus\{0\}\rightarrow \mathbb Q^\mathfrak C$
as follows:
for a homogeneous $T$-eigenfunction $f\in \mathbb K[X_P]$ of degree 
$m_f$ and weight $\mu_f$, the valuation $\nu_\mathfrak C(f)$ is  given by the coefficients of the expression 
of $(m_f,\mu_f)$ as a $\mathbb Q$-linear combination of the basis $\mathbb B_\mathfrak C$. 
The quasi-valuation $\nu:\mathbb K[X_P]\setminus\{0\}\rightarrow \mathbb Q^A$ 
associated to the combinatorial Seshadri stratification is defined by 
$$\nu(f)={\min}_{>^t}\{\nu_\mathfrak C(f)\mid \mathfrak C\ \textrm{maximal chain in\,}A\}\subseteq \mathbb Q^A,$$ 
where, in order to define the minimum, a linearization $>^t$ of the partial order on $A$ is fixed.
This quasi-valuation induces a filtration on $\mathbb K[X_P]$ by ideals, let $\mathrm{gr}_\nu\mathbb K[X_P]$ 
be the associated graded ring and set $X_0=\mathrm{Proj\,}(\mathrm{gr}_\nu\mathbb K[X])$.

This construction raises many natural questions: Are there many combinatorial Seshadri stratifications on $X_P$? 
What is the structure of $X_0$? How is the geometry of $X_P$ related to the
geometry of $X_0$? How to determine explicitly $\nu(f)$, etc... 

\subsection*{Results}
The aim of this article is to give answers to the above questions.
\subsubsection*{1}
First of all, to achieve a classification, we introduce on the set of combinatorial Seshadri stratifications on $X_P$ an equivalence relation (Definiton~\ref{equivalence:relation})
which ensures $\mathrm{gr}_\nu\mathbb K[X_P]\simeq \mathrm{gr}_{\nu'}\mathbb K[X_P]$
if $\nu$ and $\nu'$ are quasi-valuations associated to equivalent combinatorial Seshadri stratifications. 
Recall that a flag of faces in $A$ is chain, i.e. a totally ordered subset of $A$, 
see Section~\ref{Sec:Triangulations:and:Flags}.
A triangulation  $\mathcal T=(\Delta_C)_{C\in \mathcal F(A)}$ of $P$ indexed by flags of faces is, roughly
speaking, a marking $\{\mathbf v_\sigma\}_{\sigma\in A}$ of the faces of $P$ by rational points in the relative
interior of the faces. The simplices of the triangulation are given by: for a flag of faces $C$
the simplex $\Delta_C$ is the convex hull of points $\{\mathbf v_\sigma\}_{\sigma\in C}$.
\begin{thmx}
There is a bijection between the set of equivalence classes of combinatorial Seshadri stratifications
on $X_P$ and triangulations $\mathcal T$ of $P$ indexed by flags of faces in $A$.
\end{thmx}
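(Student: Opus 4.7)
The plan is to establish the bijection explicitly via the assignment $(X_\sigma, f_\sigma)_{\sigma \in A} \mapsto \{\mathbf v_\sigma\}_{\sigma \in A}$, where $\mathbf v_\sigma := -\mu_\sigma/m_\sigma \in M_\mathbb{Q}$. The compatibility condition built into the definition of a combinatorial Seshadri stratification is precisely the requirement that $\mathbf v_\sigma$ lie in the relative interior of $\sigma$, so this assignment lands in the set of markings that determine a triangulation $\mathcal T = (\Delta_C)_{C \in \mathcal F(A)}$ indexed by flags of faces.

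First I would verify well-definedness on equivalence classes. The intended form of Definition~\ref{equivalence:relation} should identify two stratifications $(X_\sigma, f_\sigma)$ and $(X_\sigma, f'_\sigma)$ exactly when, for every $\sigma$, the pairs $(m_\sigma, \mu_\sigma)$ and $(m'_\sigma, \mu'_\sigma)$ are positively proportional in $\mathbb Q \oplus M_\mathbb{Q}$; this captures the basic freedom of rescaling each $f_\sigma$ by a nonzero scalar or replacing it by a positive power. The ratio $-\mu_\sigma/m_\sigma$ is invariant under all such modifications, so the map descends to equivalence classes. Injectivity is then immediate: two stratifications producing the same marking have proportional pairs $(m_\sigma, \mu_\sigma)$, and since the orbit closures $X_\sigma$ depend only on the face $\sigma$, the stratifications are equivalent.

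For surjectivity, given a marking $\{\mathbf v_\sigma\}$ I would pick, for each face $\sigma$, an integer $m_\sigma \geq 1$ with $m_\sigma \mathbf v_\sigma \in M$. Normality of $P$ is the key input here: it guarantees that the degree-$m_\sigma$ piece of the homogeneous coordinate ring $\mathbb K[X_P]$ has a basis of $T$-eigenvectors indexed by $m_\sigma P \cap M$. Since $m_\sigma \mathbf v_\sigma \in m_\sigma \sigma \cap M \subseteq m_\sigma P \cap M$, the corresponding eigenvector yields a nonzero $f_\sigma \in \mathbb K[X_P]$ of degree $m_\sigma$ and weight $\mu_\sigma = -m_\sigma \mathbf v_\sigma$. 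Pairing these $f_\sigma$ with the canonical orbit closures $X_\sigma$ produces a combinatorial Seshadri stratification which maps back to the given marking.

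The main obstacle will be to align the equivalence relation of Definition~\ref{equivalence:relation} with the ray-equivalence of pairs $(m_\sigma, \mu_\sigma)$ just described, and in the surjectivity step to leverage normality of $P$ to extract a genuine eigenfunction of the prescribed weight and degree (rather than only some rational approximation). Once these two points are in place, confirming that the two constructions are mutual inverses reduces to a direct unwinding of definitions.
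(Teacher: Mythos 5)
Your argument correctly establishes the bijection between equivalence classes of combinatorial Seshadri stratifications and \emph{markings} of the faces of $P$ — which is precisely the content of Lemma~\ref{marking:bijection}, and your surjectivity step (choosing $m_\sigma$ to clear denominators and invoking normality to produce the eigenfunction via $\mathbb K[\hat X_P]_m=\bigoplus_{\chi\in mP\cap M}\mathbb K f_{m,\chi}$) matches the paper's. However, the theorem asserts a bijection with \emph{triangulations of $P$ indexed by flags of faces}, and there is a genuine gap between a marking and a triangulation that you do not close. You write that the assignment ``lands in the set of markings that determine a triangulation,'' but this is exactly what needs to be proved: given an arbitrary marking $(u_\sigma)_{\sigma\in A}$, one must show that the simplices $\Delta_C=\mathrm{conv}\{u_\sigma:\sigma\in C\}$, as $C$ ranges over flags, actually tile $P$ without overlaps or gaps — i.e.\ that $\mathcal T_{\mathfrak m}$ is a genuine triangulation, not just a collection of simplices. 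By Lemma~\ref{weights:lin:indep} each $\Delta_C$ is a simplex of the expected dimension, but that alone does not give a triangulation. The paper supplies the missing step as a separate lemma, proved by induction on dimension: each facet $F$ of $P$ inherits a marking and hence, by induction, a triangulation indexed by flags in $A_F$, and these facet triangulations patch together along lower faces (since the marking is globally defined); coning each such simplex over the interior point $u_P$ then yields the triangulation of $P$. Without this argument your proof establishes a bijection with markings but not with triangulations, so it is incomplete as a proof of the stated theorem.
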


\subsubsection*{2}
The quasi-valuation can be completely expressed in terms of the triangulation: let $f\in\mathbb K[X_P]\setminus\{0\}$ be a homogeneous
$T$-eigenfunction of degree $m_f$ and weight $\mu_f$, and let $\mathfrak C\subseteq A$ be a maximal chain.
We show that the following statements are equivalent:
\begin{itemize}
\item[(i)] $\nu(f)= \nu_\mathfrak C(f)$;
\item[(ii)] $-\frac{\mu_f}{m_f}\in\Delta_\mathfrak C$;
\item[(iii)] $\nu_\mathfrak C(f)$ has only non-negative entries.
\end{itemize}
Being coordinates with respect to a basis,  $\nu_{\mathfrak{C}}$ can be easily computed; and the quasi-valuation can be determined using the equivalence of (i) and (ii) above.

Moreover, if one of the equivalent conditions holds, then $\nu(f)$ is, up to rescaling,
given by the coefficients of the expression of $-\frac{\mu_f}{m_f}$ as affine linear combination
of the vertices of $\Delta_\mathfrak C$.

\subsubsection*{3} 
The next task is to describe the structure of $\mathrm{gr}_\nu\mathbb K[X_P]$.
\emph{A priori}, one has a dependence on the choice of  $``>^t$''. But, by the result above, 
it turns out that the relevant properties of $\nu$ only depend on the triangulation $\mathcal T$
and not on the choice of the linearization.

Let $S\subseteq \mathbb Z\oplus M$ 
be the weight monoid of the embedded toric variety. For a flag of faces $C\subseteq A$ let
$K(\Delta_C)\subseteq \mathbb R\oplus M_{\mathbb R}$ be the cone over the simplex
and set $S_C:=S\cap K(\Delta_C)$. The union of the cones $K(\Delta_C)$ defines a fan of cones,
where $C$ is running over all flags of faces in $A$. In the same way, the union of the 
$S_C$ defines a fan of monoids $S_\mathcal T$, where $C$ is running over all flags of faces in $A$.
We show in Section~\ref{sec:fan:algebra}:
\begin{thmx}
Denote by $\Gamma=\{\nu(f)\mid f\in\mathbb K[X_P] \setminus\{0\}\}\subseteq \mathbb Q^A$ the image
of the quasi-valuation $\nu$.
\begin{itemize}
\item[(i)] $\Gamma$ is a fan of monoids, isomorphic to
$S_{\mathcal T}$.  In particular, $\Gamma$ depends, up to isomorphism, only on the triangulation
$\mathcal T$ and is independent of the choice of the linearization $>^t$ of $A$.
\item[(ii)] 
The associated graded algebra $\mathrm{gr}_\nu\mathbb K[X_P]$ is isomorphic to the fan 
algebra $\mathbb K[\Gamma]$. In particular, the algebra $\mathrm{gr}_\nu\mathbb K[X_P]$ 
depends only on the triangulation $\mathcal T$.
\item[(iii)] 
The variety $X_0=\mathrm{Proj}\,(\mathrm{gr}_\nu\mathbb K[X_P])$ is 
reduced. It is the irredundant union of the toric varieties $\mathrm{Proj}\, (\mathbb K[S_\mathfrak C])$,
where $\mathfrak C$ runs over of all maximal chains in $A$. The variety $X_0$ is equidimensional,
i.e. all irreducible components of $X_0$ have same dimension as $X_P$.
\end{itemize}
\end{thmx}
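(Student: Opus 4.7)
The plan is to transport everything to the combinatorial/convex side via the equivalence of (i), (ii), (iii) established just above, after which all three assertions follow essentially formally from general properties of fan algebras.

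For part~(i), I would start from the observation that on $T$-eigenfunctions $\nu_\mathfrak C$ is by construction the $\mathbb{Q}$-linear change of basis $(m,\mu)\mapsto [(m,\mu)]_{\mathbb B_\mathfrak C}$. The equivalence (ii)$\Leftrightarrow$(iii) identifies $S_\mathfrak C=S\cap K(\Delta_\mathfrak C)$ as precisely the preimage of $\mathbb Q_{\geq 0}^\mathfrak C$, and (i)$\Leftrightarrow$(ii) shows that on this subset $\nu_\mathfrak C=\nu$; call this restriction $\Phi_\mathfrak C\colon S_\mathfrak C\to \Gamma_\mathfrak C$, where $\Gamma_\mathfrak C:=\Gamma\cap \mathbb Q_{\geq 0}^\mathfrak C$. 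The maps $\Phi_\mathfrak C$ agree on any common sub-chain $C\subseteq \mathfrak C\cap \mathfrak C'$ because $\mathbb B_C$ sits inside both $\mathbb B_\mathfrak C$ and $\mathbb B_{\mathfrak C'}$, and any element of $S_C$ expands with support in $C$. Hence $\bigcup_\mathfrak C\Phi_\mathfrak C$ is a fan isomorphism $S_\mathcal T\to \Gamma$, which manifestly depends only on $\mathcal T$.

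For part~(ii), since each weight space of $\mathbb K[X_P]$ is one-dimensional, every non-zero graded piece of $\mathrm{gr}_\nu\mathbb K[X_P]$ is one-dimensional and indexed by an element of $\Gamma$, so as $\mathbb K$-vector spaces $\mathrm{gr}_\nu\mathbb K[X_P]\simeq \mathbb K[\Gamma]$. For the ring structure, let $f,g$ be $T$-eigenfunctions and suppose $\supp\nu(f)\cup \supp\nu(g)$ is contained in some maximal chain $\mathfrak C$; then both $-\mu_f/m_f$ and $-\mu_g/m_g$ lie in $\Delta_\mathfrak C$, so their convex combination $-\mu_{fg}/m_{fg}$ also lies in $\Delta_\mathfrak C$, and (ii)$\Rightarrow$(i) together with the additivity of $\nu_\mathfrak C$ on eigenfunctions gives $\nu(fg)=\nu(f)+\nu(g)$. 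Conversely, if the union of the supports is not a chain, then $\nu(f)+\nu(g)$ has non-chain support and thus is not in $\Gamma$, while $\nu(fg)\in\Gamma$; since super-additivity $\nu(fg)\geq^t \nu(f)+\nu(g)$ always holds, the inequality must be strict and $[f]\cdot [g]=0$ in $\mathrm{gr}_\nu$. This is precisely the multiplication rule of the fan algebra $\mathbb K[\Gamma]$.

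For part~(iii), the projections $\mathbb K[\Gamma]\twoheadrightarrow \mathbb K[S_\mathfrak C]$ that send $x^a$ to $0$ for $a\notin \Gamma_\mathfrak C$ are ring maps to integral domains and, since $\Gamma=\bigcup_\mathfrak C\Gamma_\mathfrak C$, they jointly separate elements, giving an injection $\mathbb K[\Gamma]\hookrightarrow \prod_\mathfrak C\mathbb K[S_\mathfrak C]$ and hence reducedness. Geometrically this exhibits $X_0$ as the union of the toric varieties $\mathrm{Proj}(\mathbb K[S_\mathfrak C])$; irredundancy follows from the fact that distinct maximal monoids $S_\mathfrak C$ give distinct minimal primes, each $\Gamma_\mathfrak C$ containing lattice points absent from every other $\Gamma_{\mathfrak C'}$. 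Finally, every maximal chain $\mathfrak C$ has cardinality $\dim P+1$ and $\mathbb B_\mathfrak C$ is a basis of $\mathbb Q\oplus M_\mathbb Q$, so the cone $K(\Delta_\mathfrak C)$ is full-dimensional and each toric component has dimension $\dim X_P$, proving equidimensionality. The main obstacle I anticipate is the equality criterion in part~(ii): carefully relating the combinatorial condition ``the union of supports is a chain'' to both the convex criterion (ii) recalled above and to the defining multiplication of the fan algebra is what binds together the three layers (combinatorics of $\mathcal T$, convex geometry of the simplices $\Delta_\mathfrak C$, algebra of $\mathrm{gr}_\nu$); once this is settled, parts (i) and (iii) follow as essentially formal consequences.
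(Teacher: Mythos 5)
Your proposal is correct and follows essentially the same route as the paper's proof (Theorems~\ref{theorem:finite:generation} and~\ref{fanAndDegeneratetheorem}): the bijection $\Gamma\leftrightarrow S$ via the change-of-basis interpretation of $\nu_\mathfrak C$ combined with Proposition~\ref{prop:simplex}, the additivity criterion of Corollary~\ref{coro:support:additive} to get the algebra isomorphism, and a minimal-prime analysis of the ideals $I_\mathfrak C$. The only cosmetic deviations are that you build the fan isomorphism by gluing the local maps $\Phi_\mathfrak C$ rather than writing the global map at once, and you deduce reducedness and minimality of the $I_\mathfrak C$ from the injection $\mathbb K[\Gamma]\hookrightarrow\prod_\mathfrak C\mathbb K[S_\mathfrak C]$ together with irredundancy of $\bigcap_\mathfrak C I_\mathfrak C=0$, whereas the paper checks minimality of each $I_\mathfrak C$ directly by exhibiting a zero-divisor in any quotient by a strictly smaller ideal; both arguments are valid.
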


\subsubsection*{4} 
The next step is to view $X_0$ as the special fibre in a flat family. Using the connection 
between the quasi-valuations $\nu$, monomial preorders and approximations by integral weight orders
we show (Section~\ref{homogenizatíon}):
\begin{thmx}
There exists a variety $\breve{\mathfrak X}_P$ together with a flat surjective morphism
$\pi:\breve{\mathfrak X}_P\rightarrow \mathbb A^1$ such that the fibre over $0\in\mathbb{A}^1$ is isomorphic to $X_0$, and
$\pi$ is trivial over $\mathbb{A}^1\setminus\{0\}$ with fibre isomorphic to $X_P$.
\end{thmx}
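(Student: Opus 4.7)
The plan is to realize $\breve{\mathfrak X}_P$ as the $\mathrm{Proj}$, over $\mathbb K[t]$, of a Rees-type algebra built from the $\nu$-filtration on $R:=\mathbb K[X_P]$. The first task is to replace the $\mathbb Q^A$-valued quasi-valuation $\nu$ by an integer weight $w$ with the same associated graded. By the previous theorem, the image $\Gamma\cong S_\mathcal T$ is a fan of finitely generated monoids sitting inside $\mathbb Q^A$, so denominators are uniformly bounded. Choose a $\mathbb Z$-linear form $\ell\colon\mathbb Q^A\to\mathbb Q$ which refines the linearization $>^t$ and is sufficiently generic on the finitely many weight-comparisons that are relevant on a fixed monoid generating set of $\Gamma$. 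Setting $w:=\ell\circ\nu$ (rescaled to land in $\mathbb Z$) defines a $\mathbb Z$-valued monomial preorder on $R\setminus\{0\}$ whose induced filtration $\mathcal F_{\leq n}:=\{f\in R\mid w(f)\leq n\}\cup\{0\}$ has $\mathrm{gr}_wR\cong\mathrm{gr}_\nu R$. This is precisely the ``approximation by integral weight orders'' step alluded to in the abstract of the section.

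Next, form the Rees algebra
\[
\breve R \;:=\; \bigoplus_{n\geq 0}\mathcal F_{\leq n}\, t^n\;\subseteq\; R[t].
\]
The original $\mathbb Z$-grading on $R$ by polynomial degree (coming from $X_P\subseteq\mathbb P(V)$) lifts to a second grading on $\breve R$, so that $\breve R$ is bigraded, with the $t$-grading making it a $\mathbb K[t]$-algebra. Standard properties of Rees algebras of exhaustive separated filtrations yield that $\breve R$ is $\mathbb K[t]$-torsion-free, hence flat; that multiplication by $t$ induces an isomorphism $\breve R/t\breve R\cong\mathrm{gr}_wR$; and that there is a canonical isomorphism $\breve R[t^{-1}]\cong R[t,t^{-1}]$, sending $ft^n\mapsto ft^n$. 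Define
\[
\breve{\mathfrak X}_P\;:=\;\mathrm{Proj}_{\mathbb K[t]}\breve R,
\]
taking $\mathrm{Proj}$ with respect to the grading coming from $R$, and let $\pi$ be the structural morphism to $\mathrm{Spec}\,\mathbb K[t]=\mathbb A^1$. Flatness of $\pi$ descends from flatness of $\breve R$ over $\mathbb K[t]$; the identification $\breve R/t\breve R\cong\mathrm{gr}_\nu R$ combined with Theorem C (iii) gives $\pi^{-1}(0)\cong X_0$; and the identification $\breve R[t^{-1}]\cong R[t,t^{-1}]$ shows $\pi$ is trivial with fibre $X_P$ over $\mathbb A^1\setminus\{0\}$, which also establishes surjectivity.

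The genuine obstacle is the first step: producing one $\mathbb Z$-linear functional $\ell$ for which $\mathrm{gr}_wR$ agrees with $\mathrm{gr}_\nu R$. A priori $\nu$ is defined by a $\min_{>^t}$ over all maximal chains, and collapsing this onto a single linear weight could merge leading terms that $\nu$ keeps apart. The key points making it work are that $\Gamma\cong S_\mathcal T$ is finitely generated as a fan of monoids, so only finitely many strict inequalities have to be preserved by $\ell$; that the linearization $>^t$ of the partial order on $A$ lifts to a total order on the coordinates of $\mathbb Q^A$ along which $\ell$ can be chosen compatibly; and that the equivalence (i)$\Leftrightarrow$(iii) in the paragraph after Theorem A guarantees that whichever chain $\mathfrak C$ realizes $\nu(f)$ also realizes the $\ell$-leading term, once $\ell$ is generic. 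Once this equality of associated gradeds is in hand, everything else reduces to the classical Rees algebra formalism.
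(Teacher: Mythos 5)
The overall strategy you propose --- replace the $\mathbb{Q}^A$-valued quasi-valuation $\nu$ by a $\mathbb{Z}$-valued weight with the same associated graded, and then run the Rees-algebra/homogenization machinery --- is the strategy the paper uses as well. But the step you yourself flag as the ``genuine obstacle'' is not resolved by the argument you offer, and this is precisely where the paper takes a different route. You want a linear $\ell:\mathbb{Q}^A\to\mathbb{Q}$ with $\mathrm{gr}_{\ell\circ\nu}R\cong\mathrm{gr}_\nu R$, and you claim it suffices to choose $\ell$ generic on a finite generating set of $\Gamma$. That is not enough. For the associated gradeds to coincide you need, for \emph{every} pair of $\hat T$-eigenfunctions $f,g$ whose $\nu$-supports lie in no common chain, the strict inequality $\ell\big(\nu(fg)\big)>\ell\big(\nu(f)+\nu(g)\big)$; otherwise the product $\bar f\bar g$, which is zero in $\mathrm{gr}_\nu R$, survives in $\mathrm{gr}_{\ell\circ\nu}R$. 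These are infinitely many strict inequalities, and finite generation of the monoid $\Gamma$ does not control them: a linear functional generically does \emph{not} refine a lexicographic order (already on $\mathbb{N}^2$ with lex order there is no linear $\ell$ satisfying $\ell(1,0)>\ell(0,n)$ for all $n$). One has to show that the ``critical differences'' $\nu(fg)-\nu(f)-\nu(g)$ lie in a finite union of rational polyhedral cones strictly on the positive side of $>^t$, or reduce to a finite presentation.

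The paper does the latter. It first enlarges the ambient space to $V\oplus U$ so that $\mathrm{gr}_\nu\mathbb{K}[\hat X_P]$ is generated by the chosen generators, obtaining a presentation $\mathbb{K}[\hat X_P]\cong\mathbb{K}[\underline x]/\ker\theta$ on which the quasi-valuation induces a genuine global monomial \emph{preorder} $\succ_A$. Then \cite[Theorem 3.2]{KTv} produces $\lambda\in\mathbb{Z}^p$ with $\mathrm{in}_{\succ_A}(\ker\theta)=\mathrm{in}_\lambda(\ker\theta)$; the finiteness comes from a standard/Gr\"obner basis of the finitely generated ideal $\ker\theta$, not from generators of $\Gamma$. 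The identification of $\mathrm{in}_{\succ_A}(\ker\theta)$ with $\ker\bar\theta$, i.e.\ the fact that the initial ideal cuts out the associated graded, is itself a nontrivial lemma (Lemma~\ref{initial:tilde:theta:equals:bar:theta}), proved via the compatible bases $\mathbb{B}_1\cup\mathbb{B}_2\cup\mathbb{B}_3$; you would need a substitute for this as well. Once these points are secured your Rees algebra $\breve R$ and the paper's $\lambda$-homogenized ring $\mathbb{K}[\underline x,u]/\breve J$ are the same object, and the flatness, special fibre, and triviality away from $0$ follow as you state. So the gap is not in the formal Rees package but in the passage from the lexicographically ordered $\nu$ to a single integral weight with the same graded: you need the finite presentation (or an explicit polyhedral argument), not just finite generation of $\Gamma$.
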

In Section~\ref{weight:function} we provide another way to look at the degeneration in the above theorem: Indeed, there exists an embedding of $X_P$ and $X_0$ into a weighted projective space
$\mathbb P(m_1,\ldots,m_r)$, endowed with the action of a one dimensional torus $\mathbb G_m$, such that 
one can view $X_0$ as a ``limit variety'': $\lim_{s\rightarrow 0} s\cdot X_P=X_0$.

In particular, if all vertices of the triangulation $\mathcal T$ are lattice points, then one can attach 
to each maximal chain $\mathfrak C\subseteq A$ a maximal simplex $\Delta_\mathfrak C$ 
and a toric variety $X_{\Delta_\mathfrak C} \subseteq \mathbb P(V)$.
The one dimensional torus $\mathbb G_m$ above acts also on the projective space $\mathbb P(V)$ we started
with, and it makes sense  to study the limit $\mathbb X_0:=\lim_{s\rightarrow 0} s\cdot X_P$ inside $\mathbb P(V)$, 
similar to what was done in \cite{Zhu}.
We prove the following result in Section~\ref{Zhu}, which can be thought of as a special case of the results in \emph{ibid.}: The limit $\mathbb X_0$ inside $\mathbb P(V)$
is the union of the toric varieties $X_{\Delta_\mathfrak C}$, where $\mathfrak C$ is running over all maximal chains 
in $A$.

The limit varieties $X_0 \subseteq \mathbb P(m_1,\ldots,m_r)$ and $\mathbb X_0\subseteq \mathbb P(V)$
are strongly connected. For example, the irreducible
component $X_\mathfrak C$ of $X_0\subseteq \mathbb P(m_1,\ldots,m_r)$, where $\mathfrak C\subseteq A$ a maximal chain,
is isomorphic to the normalization of $X_{\Delta_\mathfrak C}$, the corresponding irreducible component of $\mathbb X_0$.
The limit $\lim_{s\rightarrow 0} s\cdot I(X_P)$ of the vanishing ideal $I(X_P)$ of $X_P\subseteq \mathbb P(V)$
is in general not a radical ideal (see also \cite{Zhu}), for details about the  radical  
see Theorem~\ref{shadow:theorem}.

\subsubsection*{5}
In the last section we compare the notion of a combinatorial Seshadri stratification in this article
with the notion of a Seshadri stratification in \cite{CFL}. We show that: A combinatorial Seshadri stratification $(X_\sigma,f_\sigma)_{\sigma\in A}$ on $X_P$
is a Seshadri stratification on $X_P$ in the sense of \cite{CFL}, which is equivariant with respect to the $T$-action on $X_P$.
The quasi-valuation $\nu$ associated to a combinatorial Seshadri stratification $(X_\sigma,f_\sigma)_{\sigma\in A}$
is the same as the quasi-valuation $\mathcal V$ associated to the Seshadri stratification in \cite{CFL}. The associated semi-toric varieties are therefore the same.

\subsection*{Outlooks}
In the case of toric varieties, the role played by the subvarieties in the usual framework of a Seshadri stratification is completely 
replaced by the triangulations indexed by flags of faces. In a forthcoming article we will generalize this approach and consider,
as it was done in the rank one case in \cite{GKZ}, 
triangulations of the polytope $P$ as a natural starting point to construct higher rank quasi-valutions on the homogeneous coordinate ring
of an embedded normal toric variety, and to describe  the corresponding degenerate variety.

Another line of generalization we are approaching is to replace toric varieties by spherical varieties. 
It is a class of varieties which is endowed with a large collection of combinatorial tools, and we plan to use this
for a combinatorial description of the structure of the degenerate variety. But already the case of the flag variety
(see \cite{CFL3,CFL4}) shows that the transition from toric varieties to spherical varieties can not be done with ease.

\subsection*{Organization of the article}
The article is structured as follows. In Section~\ref{sec:toric:variety}
we recall a few standard facts about normal toric varieties and we fix some notation.
In Section~\ref{Sec:TSS} we introduce the notion of a combinatorial Seshadri 
stratification and in Section~\ref{Sec:Triangulations:and:Flags} we introduce
the triangulations of $P$ indexed by flags of faces. 
In Section~\ref{quasi-valuation:sec} we define 
the quasi-valuation associated to a  combinatorial Seshadri stratification on $X_P\subseteq \mathbb P(V)$
and prove a few first properties. In Section~\ref{sec:one:parameter} we discuss a flat degeneration of $X_P$ into $X_0$
given by a one parameter group, and in Section~\ref{Zhu} we discuss the  case where the vertices
of the triangulation are all lattice points.
In the last section, Section~\ref{Sec:SS}, we show that a combinatorial Seshadri stratification
is indeed a Seshadri stratification as defined in \cite{CFL}.

\vskip 10pt
\noindent
\textbf{Acknowledgments:} The work of R.C. is partially supported by PRIN 2022 S8SSW2 ``Algebraic and geometric aspects of Lie theory" (CUP I53D23002410006). The work of M.C.C. is founded by PRIN 2022 A7L229 ``ALgebraic and TOPological combinatorics" (CUP J53D23003660006). The work of X.F. is funded by the Deutsche
Forschungsgemeinschaft: “Symbolic Tools in
Mathematics and their Application” (TRR 195, project-ID 286237555). The work of P.L. is partially supported by DFG SFB/Transregio 191 ``Symplektische Strukturen in Geometrie, Algebra und Dynamik''.


\section{Polytopes and embedded normal toric varieties}\label{sec:toric:variety}
We fix some notation and recall a few standard facts about embedded normal toric varieties.
We denote by $\mathbb{K}$ an algebraically closed field of characteristic zero. 
Let $T\simeq (\mathbb K^*)^n$ be a torus with character lattice $M$ and dual lattice $N$.  We write $\langle\cdot ,\cdot \rangle$
for the non-degenerate pairing on $N\times M$ defined by: $\langle\eta,\mu\rangle$
is the unique integer such that $\mu(\eta(s))=s^{\langle\eta,\mu\rangle}$ for all $s\in\mathbb K^*$.

Let $P\subseteq M_\mathbb R=M\otimes_{\mathbb Z}\mathbb R$
be a full dimensional lattice polytope and set $\Lambda= P\cap M$.
Let $\{e_\chi\mid \chi\in \Lambda\}\subseteq \mathbb K^\Lambda$  be the standard basis of $V=\mathbb K^\Lambda$. 

\begin{definition}{(\cite[Chapter 2]{CLS})}\label{def:toric:variety}
The \textit{embedded toric variety $X_{P}\subseteq \mathbb P(V)$} is defined as the Zariski closure of the image of the  map 
$\iota: T \rightarrow \mathbb P(V),\quad t \mapsto \left[\sum_{\chi\in \Lambda} \chi(t) e_\chi\right]$.
\end{definition}
If $P$ is a normal polytope, then $X_P\subseteq \mathbb P(V)$ is a projectively normal variety.

For the rest of the article we assume that $P$ is a normal polytope.
\subsection{Orbits and faces}\label{sec:orbit:face}
We fix the coordinates on $V=\mathbb K^{\Lambda}$ and write $x_{\chi}$, $\chi\in \Lambda$, 
for the linear function on $V$ dual to $e_\chi$. 
Denote by $\hat X_{P}\subseteq V$ the affine cone over  $X_{P}$. 
Let $\hat T$ be the torus $\mathbb K^*\times T$ with character lattice $\hat M=\mathbb Z\oplus M$. Let  $\hat\iota:\hat T\rightarrow V$ be the map 
$(c,t)  \mapsto \sum_{\chi\in\Lambda} \chi(t) ce_\chi$, then $\hat X_{P}$ is the closure
of the image of $\hat\iota$ and $(x_{\chi}\circ \hat\iota)(c,t) =c\chi(t)$ for $(c,t)\in\hat T$.
One has a bijection between the $T$-orbits in $X_P$ and the faces of $P$ (\cite[Section 2.3, Section 3.2]{CLS}).
Let $\sigma$ be a face of $P$ and set $\Lambda_\sigma=\Lambda\cap\sigma$.
The $T$-orbit associated to a face $\sigma$ of $P$ is $O_\sigma= \{  [\sum_{\chi\in\Lambda_\sigma} \chi(t)e_\chi]\mid  t\in T\}$.
In particular, the coefficients of the $e_\chi$, $\chi\in \Lambda_\sigma$, are nonzero. 
\begin{definition}\label{defn:orbit:closure}
The Zariski closure $\overline{O_\sigma}\subseteq X_P$ of the orbit is denoted by
$X_\sigma$.
\end{definition}
The variety $X_\sigma$ is a toric variety associated to the polytope $\sigma$,
where we view the latter as a full dimensional lattice polytope in its affine span (see \cite[Section 3.2]{CLS}).

Let $A$ be the set of faces of $P$. This set is partially ordered by the inclusion relations on the faces: 
we write $\tau\le \sigma$ for 
$\sigma, \tau\in A$, if and only if $\tau\subseteq \sigma$. We have for the orbit closures: $X_\tau \subseteq X_\sigma$ 
if and only if $\tau\le \sigma$.

\subsection{The homogeneous coordinate ring of $X_P$}\label{homogeneous:coordinate}
The homogeneous coordinate ring $\mathbb K[X_{P}]$ of 
$X_{P}\subseteq \mathbb P(V)$ 
and the coordinate ring $\mathbb K[\hat X_{P}]$ of the affine cone $\hat X_{P}$ are the same rings.
Since we often work with $\hat X_P$, we will use for the rest of the article 
the notation $\mathbb K[\hat X_{P}]$. 
This ring has a natural grading 
$\mathbb K[\hat X_{P}]=\bigoplus_{m\ge 0}\mathbb K[\hat X_{P}]_m$. 

The action of $\hat T$ on $V$ induces a natural action
of $\hat T$ on $\mathbb K[V]$: for $\hat t=(c,t)\in \mathbb K^*\times T$ and $g\in \mathbb K[V]$ let 
$\hat t\cdot g$ be the regular function: $\hat X\rightarrow \mathbb K$, $x\mapsto g(t^{-1}.(c^{-1} x))$.
The coordinate functions $x_\chi$, $\chi\in \Lambda$, are $\hat T$-eigenfunctions of weight $(-1,-\chi)$.
Note that $\hat T$-eigenfunctions are automatically homogeneous, the degree is the absolute value
of the first entry of the $\hat T$-weight.
Since $\hat X_P$ is $\hat T$-stable, we get an induced $\hat T$-action on $\mathbb K[\hat X_{P}]$.

\begin{definition}
Let $Q$ be a rational polytope in $M_\mathbb R$. The  \textit{cone $K(Q)\subseteq \mathbb R\oplus M_\mathbb R$ associated to $Q$}
is the subset $K(Q):=\{(c,cp)\mid c\in \mathbb R_{\ge 0},p\in Q\}$.
\end{definition}

Let $K(P)\subseteq \mathbb R\oplus M_\mathbb R$ be the cone over $P$.
Denote by $S\subseteq K(P)$  the submonoid generated by the elements 
$(1,\chi)$, $\chi\in \Lambda$. The assumption that $P$ is a  full dimensional normal lattice polytope  
implies: $S = K(P)\cap (\mathbb Z\times M)$ (\cite[Lemma 2.2.14]{CLS}). The monoid $S$ is called the {\it weight monoid}
associated to $X_P\subseteq \mathbb P(V)$.

A monomial $\prod_{\chi\in\Lambda}x_{\chi}^{a_\chi}\in \mathbb K[V]$ is a $\hat T$-eigenvector, 
its $\hat T$-weight is $(-m,-\eta)$, where $(m,\eta)=\sum_{\chi\in\Lambda} a_\chi(1,\chi)$ is an element in
the weight monoid $S$. 
The algebra $\mathbb K[\hat X_{P}]$ is linearly spanned by the restrictions  
$\prod_{\chi\in\Lambda}x_{\chi}^{a_\chi}\vert_{\hat X_{P}}$ of these monomials.

For every $(m,\eta)\in S$, we fix a decomposition $(m,\eta)= \sum_{\chi\in\Lambda} a_\chi(1,\chi)$,
and set $f_{m,\eta}=\prod_{\chi\in\Lambda}x_{\chi}^{a_\chi}\vert_{\hat X_{P}}$.
The following is well known:
\begin{lemma}\label{explicit:basis}
The function $f_{m,\eta}$ depends only on $(m,\eta)$ and not on the choice of the 
decomposition of $(m,\eta)$. The set $\{f_{m,\eta}\mid (m,\eta)\in S \}$ is a 
$\mathbb K$-basis for $\mathbb K[\hat X_{P}]$. 
\end{lemma}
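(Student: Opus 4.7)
The key observation is that the monomials become simple characters once pulled back along the parametrization $\hat\iota:\hat T\to V$. First I would compute, for any decomposition $(m,\eta)=\sum_{\chi\in\Lambda}a_\chi(1,\chi)$, the pullback
\[
\Bigl(\prod_{\chi\in\Lambda}x_\chi^{a_\chi}\Bigr)\circ\hat\iota\,(c,t)\;=\;\prod_{\chi\in\Lambda}\bigl(c\chi(t)\bigr)^{a_\chi}\;=\;c^{\sum a_\chi}\Bigl(\prod_{\chi\in\Lambda}\chi^{a_\chi}\Bigr)(t)\;=\;c^{m}\eta(t),
\]
using that $\sum a_\chi=m$ and $\sum a_\chi\chi=\eta$ in $M$. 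The right-hand side depends only on $(m,\eta)$ and not on the chosen decomposition. Since $\hat X_P$ is by definition the closure of the image of $\hat\iota$, two regular functions on $\hat X_P$ that agree on this image agree everywhere. Hence $\prod x_\chi^{a_\chi}|_{\hat X_P}$ depends only on $(m,\eta)$, proving the first assertion and allowing me to define $f_{m,\eta}$ unambiguously.

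For the second assertion, the spanning property is already stated in the paragraph preceding the lemma: $\mathbb K[\hat X_P]$ is linearly spanned by the restrictions of all monomials in the $x_\chi$, and by the first part each such restriction equals some $f_{m,\eta}$ with $(m,\eta)\in S$. It remains to prove linear independence. Suppose $\sum_{(m,\eta)} c_{m,\eta}f_{m,\eta}=0$ in $\mathbb K[\hat X_P]$, the sum being finite. Pulling back via $\hat\iota$ and using the computation above, this means the function
\[
(c,t)\;\longmapsto\;\sum_{(m,\eta)\in S} c_{m,\eta}\,c^{m}\eta(t)
\]
vanishes identically on $\hat T=\mathbb K^*\times T$. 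But the functions $(c,t)\mapsto c^{m}\eta(t)$ are precisely the distinct characters of $\hat T$ corresponding to the pairwise distinct elements $(m,\eta)\in\hat M=\mathbb Z\oplus M$, and Dedekind's linear independence of characters forces every $c_{m,\eta}$ to be zero.

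The argument is essentially a single calculation followed by independence of characters; there is no serious obstacle. The only point requiring mild care is the reduction from "vanishes on $\hat X_P$" to "vanishes on the image of $\hat\iota$", which is immediate from density, and the bookkeeping that $\sum a_\chi(1,\chi)=(m,\eta)$ translates simultaneously into the exponent of $c$ and the product of characters of $T$.
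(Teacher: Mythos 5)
Your proposal is correct. The paper itself gives no proof here — it dismisses the lemma with "The following is well known" — so there is nothing to compare against, but your argument is the standard one and fills the gap cleanly: pull back along the dominant map $\hat\iota:\hat T\to\hat X_P$, observe that the monomial $\prod x_\chi^{a_\chi}$ becomes the character $(c,t)\mapsto c^m\eta(t)$ of $\hat T$ which manifestly depends only on $(m,\eta)$, and deduce well-definedness from density of the image and linear independence from Dedekind's theorem on characters together with injectivity of the pullback $\mathbb K[\hat X_P]\hookrightarrow\mathbb K[\hat T]$ (which again uses that $\hat X_P$ is irreducible and $\hat\iota(\hat T)$ is dense). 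One small point worth making explicit if you write this up: you invoke "two regular functions agreeing on the image agree everywhere," which is the injectivity of $\hat\iota^*$; it is worth stating that this is because $\hat X_P$ is irreducible and $\hat\iota(\hat T)$ is a dense (open) orbit, so a regular function vanishing there vanishes identically.
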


The subspaces of homogeneous  functions are given by 
$\mathbb K[\hat X_P]_m=\bigoplus_{\chi\in (mP\cap M)}\mathbb K f_{m,\chi}$, $m\in\mathbb N$ (see \cite[Example~4.3.7]{CLS}).

\section{Combinatorial Seshadri stratifications and valuations}\label{Sec:TSS}
Let $X_P\subseteq \mathbb P(V)$ be an embedded toric variety as in Section~\ref{sec:toric:variety}
and recall that $A$ denotes the set of faces of the polytope $P$.
Let $X_\sigma$, $\sigma\in A$, be the collection of $T$-orbit closures in $X_P$ 
and let  $f_\sigma \in \mathbb K[\hat X_P]$, $\sigma\in A$, be a collection of homogeneous $T$-eigenfunctions 
of degree $\deg f_\sigma\ge 1$. Denote by $\mu_\sigma=(-\deg f_\sigma,\tilde\mu)$, $\tilde\mu\in M$, the $\hat T$-weight of $f_\sigma$.
In the following we identify $P\subseteq M_\mathbb R$ with the polytope in
the affine hyperplane $\{(1,\eta)\mid \eta\in M_\mathbb R\}\subset\hat M_\mathbb R$
obtained as the convex hull of the points $(1,\chi)$, $\chi\in\Lambda$.

\begin{definition}\label{defn:T_equi:SS}
The collection $(X_\sigma,f_\sigma)_{\sigma\in A}$ of $T$-orbits closures $X_\sigma\subseteq X_P$ and $\hat T$-eigen\-functions
$f_\sigma\in \mathbb K[\hat X_P]$ is called a  
\emph{combinatorial Seshadri stratification on $X_P$} if the $\hat T$-weights $\mu_\sigma$ of the functions $f_\sigma$, $\sigma\in A$,
satisfy the following condition:
\begin{equation}\label{weight:condition11}
\forall \sigma\in A, \ \frac{-\mu_\sigma}{\deg f_\sigma}\textrm{\it\ is a point in the relative interior $\sigma^o$ of $\sigma$}.
\end{equation}
\end{definition}

\begin{example}\label{square1x1}
A natural choice of the functions $f_\sigma$ is the product of the linear functions associated to vertices of $\sigma$.

	Let $T=(\mathbb K^*)^2$ be the torus. The character lattice of $T$ is $\mathbb  Z e_1\oplus\mathbb Z e_2$ where $\{e_1,e_2\}$ is the natural basis of $\mathbb R^2$. Let $P$ be the square with set of  vertices $E=\{v_0=(0,0), \ v_1=(1,0), \  v_2=(0,1), \ v_3=(1,1)\}$. Then $\Lambda=E$.
	$$
	\begin{tikzpicture}
		\fill[lightgray!20] (0,0) rectangle  (1,1);
			\filldraw  [black] (0,0) circle (2pt);
		\filldraw [black] (0,1) circle (2pt);
		\filldraw  [black] (1,0) circle (2pt);
		\filldraw [black] (1,1) circle (2pt);
		\draw (0,0) rectangle (1,1);
		\end{tikzpicture}
		$$
	The faces of $P$ are: the vertices in $E$, the $4$ edges $e_{(i,j)}$ connecting $v_i$ with $v_j$ where $(i,j)\in\{(0,1), \ (0,2), \ (1,3), \ (2,3))\}$, and the whole square $P$. The homogeneous $\hat{T}$-eigenfunctions
	$$
	f_{v_i}=x_{v_i}, \ f_{e_{(i,j)}}=x_{v_i}x_{v_j}, \  f_{P}=x_{v_0}x_{v_1}x_{v_2}x_{v_3}
	$$
	satisfy the property (\ref{weight:condition11}).
	\end{example}

\begin{example}\label{integral:case}
Suppose $P$ has enough lattice points in the following sense: for every face $\sigma\in A$ there
exists a weight $\chi_\sigma\in \Lambda\cap \sigma^o$ in the relative interior; fix a collection of such weights $\chi_\sigma$, $\sigma\in A$.
The linear functions $f_\sigma=x_{\chi_\sigma}$, $\sigma\in A$,
satisfy the condition on the weights, so the collection 
$(X_\sigma,f_\sigma)_{\sigma\in A}$ of subvarieties and $\hat T$-eigenfunctions is a combinatorial Seshadri stratification.
\end{example}

\begin{example}\label{square2x2}
	Let $T$ and $M$ be as in Example~\ref{square1x1}.
		 Let $P$ the square with set of  vertices $E=\{v_0=(0,0), \ v_1=(2,0), \  v_2=(0,2), \ v_3=(2,2)\}$. Then $\Lambda=E\cup \{ (1,0), \  (0,1), \ (1,1), \ (2,1), \ (1,2) \}$.
		$$
		\begin{tikzpicture}
			\fill[lightgray!20] (0,0) rectangle  (2,2);
			\filldraw  [black] (0,0) circle (2pt);
			\filldraw [black] (0,1) circle (2pt);
			\filldraw  [black] (1,0) circle (2pt);
				\filldraw [black] (2,1) circle (2pt);
			\filldraw  [black] (1,2) circle (2pt);
			\filldraw [black] (1,1) circle (2pt);
			\filldraw [black] (0,2) circle (2pt);
			\filldraw  [black] (2,0) circle (2pt);
			\filldraw [black] (2,2) circle (2pt);
			\draw (0,0) rectangle (2,2);
		\end{tikzpicture}
		$$
		The polytope $P$ has enough lattice points. So the functions $f_{\sigma}$ corresponding to the coordinate associated with the weights in $\Lambda\cap \sigma$, satisfy the property (\ref{weight:condition11}).
	\end{example}

By a \textit{marking $\mathfrak{m}$ of the faces} of the polytope $P$ we mean a collection $\mathfrak{m}=(u_\sigma)_{\sigma\in A}$ 
of points such that $u_\sigma$ is a rational point in the relative interior $\sigma^o$ of the face 
$\sigma\subseteq P$. The collection of weights in \eqref{weight:condition11} defines such a marking of the faces.

\begin{definition}\label{equivalence:relation}
Let $(X_\sigma,f_\sigma)_{\sigma\in A}$ be a  combinatorial Seshadri stratification on $X_P$
and denote by $\mu_\sigma$ the $\hat T$-weight of the function $f_\sigma$, $\sigma\in A$. We call
$\mathfrak{m}_{\mathbf f}=(\frac{-\mu_\sigma}{\deg f_\sigma})_{\sigma\in A}$ the \textit{associated marking of the faces}.
Two combinatorial Seshadri stratifications $(X_\sigma, f_\sigma)_{\sigma\in A}$ and 
$(X_\sigma, h_\sigma)_{\sigma\in A}$ are called \textit{equivalent}
if the associated markings are equal: $\mathfrak{m}_{\mathbf f}=\mathfrak{m}_{\mathbf h}$.
\end{definition}

\begin{remark}\label{T:equivariant:Compatible:subvariety}
Let $(X_\sigma,f_\sigma)_{\sigma\in A}$ be a combinatorial Seshadri stratification on $X_P\subseteq \mathbb P(V)$.
For $\sigma\in A$ let $M_{\mathbb R,\sigma}\subseteq M_{\mathbb R}$ be the affine span of $\sigma$.
Set $M_\sigma=M\cap M_{\mathbb R,\sigma}$. So $\sigma$ is a full dimensional lattice polytope in $M_{\mathbb R,\sigma}$; set $V_\sigma=\langle \hat X_\sigma\rangle_\mathbb K\subseteq V$ and
$A_\sigma=\{\kappa\in A\mid\kappa\le\sigma\}$. The collection of varieties $X_\tau\subseteq X_\sigma$
and functions $f_\tau\vert_{X_\sigma}$, $\tau\in A_\sigma$, defines a combinatorial Seshadri stratification for the normal embedded toric variety 
$X_\sigma\hookrightarrow  \mathbb P(V_\sigma)\subseteq\mathbb P(V)$.
\end{remark}
\subsection{First properties}
Let $(X_\sigma,f_\sigma)_{\sigma\in A}$ be a combinatorial Seshadri stratification on $X_P$ and denote by $\mu_\sigma$
the $\hat T$-weights of the extremal functions $f_\sigma$, $\sigma\in A$.
\begin{lemma}\label{vanishing:set}
For $\sigma,\tau\in A$, the restriction $f_\sigma\vert_{\hat X_\tau}\equiv 0$ is  identically zero if 
and only if $\tau\not\ge \sigma$.
\end{lemma}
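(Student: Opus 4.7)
The plan is to reduce the lemma to the interplay between two facts: (a) the $\hat T$-weight spaces in $\mathbb K[\hat X_P]$ are one-dimensional, so $f_\sigma$ is determined by its weight, and (b) the restriction map to $\mathbb K[\hat X_\tau]$ is $\hat T$-equivariant, killing exactly those weight spaces whose weights do not lie in $K(\tau)$. The final step is an elementary polytope observation about relative interiors of faces.

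First I would note that, by Lemma~\ref{explicit:basis}, the set $\{f_{m,\eta}\mid (m,\eta)\in S\}$ is a basis of $\mathbb K[\hat X_P]$ consisting of $\hat T$-eigenvectors with distinct weights $(-m,-\eta)$. Hence every nonzero $\hat T$-eigenfunction is a scalar multiple of some $f_{m,\eta}$. Writing $m_\sigma=\deg f_\sigma$ and $\mu_\sigma=(-m_\sigma,-\tilde\mu_\sigma)$ with $\tilde\mu_\sigma\in m_\sigma P\cap M$, we obtain $f_\sigma=c\cdot f_{m_\sigma,\tilde\mu_\sigma}$ for some $c\in\mathbb K^*$. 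Consequently $f_\sigma\vert_{\hat X_\tau}\equiv 0$ iff $f_{m_\sigma,\tilde\mu_\sigma}\vert_{\hat X_\tau}\equiv 0$.

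Next I would analyze the restriction map $\varphi_\tau:\mathbb K[\hat X_P]\twoheadrightarrow\mathbb K[\hat X_\tau]$. Since $\hat X_\tau$ is $\hat T$-stable, $\varphi_\tau$ is $\hat T$-equivariant, and so it respects the decomposition into $\hat T$-weight spaces on both sides. By Remark~\ref{T:equivariant:Compatible:subvariety} and Section~\ref{homogeneous:coordinate} applied to $X_\tau\subseteq\mathbb P(V_\tau)$, the weight monoid of $\mathbb K[\hat X_\tau]$ is $S\cap K(\tau)$, and its $\hat T$-weight spaces are one-dimensional, indexed by elements of $S\cap K(\tau)$. Therefore $\varphi_\tau(f_{m,\eta})$ is nonzero exactly when $(m,\eta)\in K(\tau)$, i.e. when $\eta/m\in\tau$; otherwise it must be zero, since there is no corresponding weight space in the target. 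In particular, $f_\sigma\vert_{\hat X_\tau}\equiv 0$ if and only if $\tilde\mu_\sigma/m_\sigma\notin\tau$, i.e. $-\mu_\sigma/m_\sigma\notin\tau$.

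Finally I would combine this with condition \eqref{weight:condition11}, which places $-\mu_\sigma/m_\sigma$ in the relative interior $\sigma^o$. Hence $f_\sigma\vert_{\hat X_\tau}\equiv 0$ iff $\sigma^o\cap\tau=\emptyset$, and it remains to check the standard polytope fact: for faces $\sigma,\tau$ of $P$, one has $\sigma^o\cap\tau\neq\emptyset$ iff $\sigma\subseteq\tau$. The nontrivial direction follows because any face of $P$ meeting the relative interior of another face must contain that face entirely (a face is cut out by the equality case of some supporting affine functional, and this functional is either constant on $\sigma$ or strictly larger at some point of $\sigma^o$). Putting everything together yields $f_\sigma\vert_{\hat X_\tau}\equiv 0$ iff $\sigma\not\le\tau$, which is the stated equivalence. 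The only subtlety worth flagging is the one-dimensionality of $\hat T$-weight spaces on both $\hat X_P$ and $\hat X_\tau$; once that is in place, everything else is an immediate consequence of weight bookkeeping and the face lattice of $P$.
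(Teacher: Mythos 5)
Your proof is correct, but it takes a genuinely different route from the paper's. The paper argues concretely: since $-\mu_\sigma/\deg f_\sigma$ lies in $\sigma^o$, the function $f_\sigma$ is the restriction of a monomial in the $x_\chi$ with $\chi\in\Lambda_\sigma$; on the open orbit $O_\tau$ exactly the coordinates $x_\chi$, $\chi\in\Lambda_\tau$, are nonzero, so nonvanishing follows immediately when $\tau\ge\sigma$, while for $\tau\not\ge\sigma$ the interior condition forces some $\chi\in\Lambda_\sigma\setminus\Lambda_\tau$ to divide $f_\sigma$, killing the restriction. You instead never look at the monomial representation: you use (i) one-dimensionality of the $\hat T$-weight spaces, (ii) $\hat T$-equivariance and surjectivity of the restriction $\mathbb K[\hat X_P]\twoheadrightarrow\mathbb K[\hat X_\tau]$, and (iii) identification of the weight monoid of $\hat X_\tau$ with $S\cap K(\tau)$, to reduce the lemma to a pure face-lattice statement: $-\mu_\sigma/\deg f_\sigma\in\tau$ iff $\sigma\subseteq\tau$. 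Your route is more abstract and arguably cleaner once (iii) is in hand; but note that (iii) is not literally stated in Section~\ref{homogeneous:coordinate} or Remark~\ref{T:equivariant:Compatible:subvariety} for the $\hat T$-action on the subvariety, and establishing it requires the same normality-of-faces input that underlies the paper's monomial decomposition, so the two arguments have the same geometric core. Also, for the ``nonzero when $(m,\eta)\in K(\tau)$'' half of your key claim you rely on surjectivity of the restriction map together with one-dimensionality of weight spaces on both sides; you signal this with the two-headed arrow but it is worth saying explicitly, since the ``zero'' half alone (which is the only part you justify verbally) does not give the biconditional.
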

\begin{proof}
Since $\frac{-\mu_\sigma}{\deg f_\sigma}\in \sigma^o$ is an element in the relative interior of $\sigma$,
it follows that $f_\sigma$ can be written as the restriction of a monomial in the $x_\chi$, $\chi\in \Lambda_\sigma$.
All the coefficients of the $e_\chi$ in the expression $y=[\sum_{\chi\in \Lambda_\tau} a_\chi e_\chi]\in O_\tau$
are nonzero (Section~\ref{sec:orbit:face}), so the restriction of the function to $X_\tau$ is not identically zero if $\tau\ge \sigma$
because $\Lambda_\sigma\subseteq \Lambda_\tau$.

If $\tau\not\ge \sigma$, then $\sigma$ is not a face of $\tau$. Since $\frac{-\mu_\sigma}{\deg f_\sigma}\in \sigma^o$, 
there exists at least one $\chi\in\Lambda_\sigma \setminus \Lambda_\tau$ such that $x_\chi$ is a factor of $f_\sigma$,
and hence the restriction of $f_\sigma$ to $O_\tau$ is identically zero.
\end{proof}
\begin{lemma}\label{marking:bijection}
The map $\Psi$, which associates to a combinatorial Seshadri stratification $(X_\sigma,f_\sigma)_{\sigma\in A}$
the marking $\mathfrak{m}_{\mathbf f}=(\frac{-\mu_\sigma}{\deg f_\sigma})_{\sigma\in A}$,
induces a bijection between the set of equivalence classes of  combinatorial Seshadri stratifications on $X_P$  and the set 
of markings of the faces of $P$.
In every equivalence class of combinatorial Seshadri stratifications there exists an element $(X_\sigma,f_\sigma)_{\sigma\in A}$
such that all other combinatorial Seshadri stratifications in the same equivalence class are equal to 
$(X_\sigma,c_\sigma f^{\ell_\sigma}_\sigma)_{\sigma\in A}$ for some $c_\sigma\in\mathbb K^*$, $\ell_\sigma\in\mathbb N_{>0}$,
$\sigma\in A$.
\end{lemma}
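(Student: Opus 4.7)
The plan is standard: verify that $\Psi$ is well-defined, injective, and surjective on equivalence classes, and then exhibit a distinguished representative of each class.

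Well-definedness and injectivity are built into Definition~\ref{equivalence:relation}: two combinatorial Seshadri stratifications are equivalent precisely when they share the same associated marking. The substantive point is therefore surjectivity, and this is the step where the normality of $P$ enters in an essential way.

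For surjectivity, fix a marking $\mathfrak{m} = (u_\sigma)_{\sigma \in A}$. For each face $\sigma \in A$, the rational point $u_\sigma$ lies in $\sigma^o \subseteq P$, so $(1, u_\sigma) \in K(P)$. I would pick any positive integer $m_\sigma$ with $m_\sigma u_\sigma \in M$; then $(m_\sigma, m_\sigma u_\sigma) \in K(P) \cap (\mathbb{Z} \oplus M)$, which coincides with the weight monoid $S$ since $P$ is normal (Section~\ref{homogeneous:coordinate}). Lemma~\ref{explicit:basis} then produces a nonzero $\hat T$-eigenfunction $f_\sigma := f_{m_\sigma, m_\sigma u_\sigma} \in \mathbb{K}[\hat X_P]$ of weight $(-m_\sigma, -m_\sigma u_\sigma)$ and degree $m_\sigma$. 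By construction $-\mu_\sigma/\deg f_\sigma = u_\sigma \in \sigma^o$, so $(X_\sigma, f_\sigma)_{\sigma \in A}$ satisfies \eqref{weight:condition11} and is a combinatorial Seshadri stratification with $\Psi$-image $\mathfrak{m}$.

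For the last assertion, inside a given equivalence class I would take $m_\sigma$ to be the \emph{smallest} positive integer with $m_\sigma u_\sigma \in M$ in the construction above, and call the resulting stratification $(X_\sigma, f_\sigma)_{\sigma \in A}$. Any other stratification $(X_\sigma, h_\sigma)_{\sigma \in A}$ in the class has some degree $m'_\sigma$ and weight $(-m'_\sigma, -m'_\sigma u_\sigma)$; minimality of $m_\sigma$ forces $m'_\sigma = \ell_\sigma m_\sigma$ for a unique $\ell_\sigma \in \mathbb{N}_{>0}$. Both $h_\sigma$ and $f_\sigma^{\ell_\sigma}$ are $\hat T$-eigenvectors in $\mathbb{K}[\hat X_P]$ of weight $(-\ell_\sigma m_\sigma, -\ell_\sigma m_\sigma u_\sigma)$; since the basis elements in Lemma~\ref{explicit:basis} have pairwise distinct $\hat T$-weights, the corresponding weight space is one-dimensional, so $h_\sigma = c_\sigma f_\sigma^{\ell_\sigma}$ for some $c_\sigma \in \mathbb{K}^*$. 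The main obstacle is the surjectivity step: without normality, a lattice point $(m_\sigma, m_\sigma u_\sigma) \in K(P) \cap (\mathbb{Z}\oplus M)$ could fall outside the monoid $S$ and no eigenfunction of the required weight would be available; once surjectivity is secured, the rest is pure bookkeeping with the basis from Lemma~\ref{explicit:basis}.
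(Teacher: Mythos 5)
Your proof is correct and follows essentially the same route as the paper's: well-definedness and injectivity come directly from Definition~\ref{equivalence:relation}, surjectivity is established by realizing each $u_\sigma$ as a scaled lattice point whose corresponding $\hat T$-weight lies in $S$, and the distinguished representative is obtained by taking the minimal such scaling factor (the paper calls it $q_\sigma$, you call it $m_\sigma$), with one-dimensionality of the $\hat T$-weight spaces forcing $h_\sigma = c_\sigma f_\sigma^{\ell_\sigma}$. Your write-up is actually a bit more explicit than the paper's in spelling out that normality (via $S = K(P)\cap\hat M$) and Lemma~\ref{explicit:basis} are what guarantee the existence of the eigenfunction $f_{m_\sigma, m_\sigma u_\sigma}$; the paper asserts this step more tersely.
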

\begin{proof}
The map $\Psi$ is well defined and injective on the equivalence classes.  Let $\mathfrak m=(u_\sigma)_{\sigma\in A}$ 
be a marking of the faces. For $\sigma\in A$,
the set $\{n\in \mathbb Z\mid n u_\sigma\in \hat M\}\subseteq \mathbb Z$ is an ideal,
let $q_\sigma>0$ be a generator of the ideal. For $\sigma\in A$ set $\lambda_\sigma=-q_{\sigma}u_\sigma\in \hat M$,
this is the weight of a $\hat T$-eigenfunction $f_\sigma$ of degree $q_\sigma$, and $(X_\sigma,f_\sigma)_{\sigma\in A}$ 
is a combinatorial Seshadri stratification with associated marking $\mathfrak m_{\mathbf f}=(u_\sigma)_{\sigma\in A}$.
It follows that $\Psi$ is a bijection, we fix these functions $f_\sigma$, $\sigma\in A$, for the rest of the proof.

A $\hat T$-eigenfunction $h_\sigma$ of weight $\nu_\sigma$ with $\frac{-\nu_\sigma}{\deg h_\sigma}=u_\sigma$ is equal to 
 $h_\sigma=c_\sigma f^{\ell_\sigma}_\sigma$ for some  $c_\sigma\in\mathbb K^*$, $\ell_\sigma\in\mathbb N_{>0}$. 
It follows: the equivalence class of $(X_\sigma,f_\sigma)_{\sigma\in A}$ consists of 
combinatorial Seshadri stratifications of the form $(X_\sigma, c_\sigma f^{\ell_\sigma}_\sigma)_{\sigma\in A}$, 
$c_\sigma\in\mathbb K^*$, $\ell_\sigma\in\mathbb N_{>0}$,
$\sigma\in A$.
\end{proof}
\begin{lemma}\label{weights:lin:indep}
If $\mathfrak{m}=(u_\sigma)_{\sigma\in A}$ is a marking of the faces of $P$, then, for every maximal chain $\mathfrak C\subseteq A$,
the  rational weights $\{u_\sigma\}_{\sigma\in\mathfrak C}$ form a $\mathbb Q$-basis for $\hat M_{\mathbb Q}$.
\end{lemma}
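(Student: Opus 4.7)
The plan is to reduce the statement to the affine independence of a collection of points in $M_\mathbb Q$, and then to establish that independence by induction along the chain, using the basic convex-geometric fact that the relative interior of a face of $P$ is disjoint from the affine span of any proper subface.

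Under the identification of $P$ with its image in the affine hyperplane $\{(1,\eta)\mid\eta\in M_\mathbb R\}\subset\hat M_\mathbb R$, each $u_\sigma$ takes the form $u_\sigma=(1,\tilde u_\sigma)$ with $\tilde u_\sigma\in\sigma^o\subseteq M_\mathbb Q$. A maximal chain $\mathfrak C$ in $A$ consists of faces $\sigma_0\subsetneq\sigma_1\subsetneq\cdots\subsetneq\sigma_n$ with $\dim\sigma_i=i$ and $\sigma_n=P$, so $|\mathfrak C|=n+1=\dim_{\mathbb Q}\hat M_\mathbb Q$. A linear relation $\sum_{i=0}^n c_i(1,\tilde u_{\sigma_i})=0$ is equivalent to the pair of conditions $\sum_i c_i=0$ and $\sum_i c_i\tilde u_{\sigma_i}=0$, i.e.\ to an affine dependence of the points $\tilde u_{\sigma_i}$. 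It therefore suffices to show that $\tilde u_{\sigma_0},\ldots,\tilde u_{\sigma_n}$ are affinely independent in $M_\mathbb Q$.

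I would establish this by induction on $i$, showing that the affine span of $\{\tilde u_{\sigma_0},\ldots,\tilde u_{\sigma_i}\}$ equals the $i$-dimensional affine span $M_{\mathbb R,\sigma_i}$ of $\sigma_i$. The base case $i=0$ is immediate since $\sigma_0$ is a vertex. For the step, I would invoke the standard polytopal identity $M_{\mathbb R,\sigma_{i-1}}\cap\sigma_i=\sigma_{i-1}$, i.e.\ a proper face of a polytope is recovered as the intersection of the polytope with its own affine span. Since $\tilde u_{\sigma_i}$ lies in $\sigma_i^o$ and therefore in no proper face of $\sigma_i$, this forces $\tilde u_{\sigma_i}\notin M_{\mathbb R,\sigma_{i-1}}$. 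Hence adjoining $\tilde u_{\sigma_i}$ to the affinely independent set $\{\tilde u_{\sigma_0},\ldots,\tilde u_{\sigma_{i-1}}\}$ raises the dimension of the affine span by exactly one; the enlarged span is contained in $M_{\mathbb R,\sigma_i}$ and has the same dimension $i$, so the two coincide. Taking $i=n$ yields affine independence of all the $\tilde u_{\sigma_i}$, and hence linear independence of the $u_{\sigma_i}$.

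I do not foresee a real obstacle beyond the convex-geometric input $M_{\mathbb R,\sigma_{i-1}}\cap\sigma_i=\sigma_{i-1}$, which is standard from the supporting-hyperplane description of faces. The role played by the coordinate $1$ in the hyperplane embedding is precisely to translate the condition ``$u_\sigma$ lies in the relative interior of $\sigma$'' into an affine, rather than a linear, statement, and it is exactly this affine reformulation that makes the chain induction transparent.
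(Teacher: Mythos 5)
Your proof is correct and takes essentially the same approach as the paper's. The paper simply asserts that the differences $u_{\sigma_1}-u_{\sigma_0},\ldots,u_{\sigma_r}-u_{\sigma_{r-1}}$ are linearly independent because each $u_\sigma$ lies in the relative interior of $\sigma$, which is exactly the affine independence you establish; your induction along the chain, using $M_{\mathbb R,\sigma_{i-1}}\cap\sigma_i=\sigma_{i-1}$, just spells out the justification the paper leaves tacit.
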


\begin{proof}
By assumption, $u_\sigma$ is an element of the relative interior of $\sigma$. So
$u_{\sigma_1}-u_{\sigma_0}$, $u_{\sigma_2}-u_{\sigma_1}$, $\ldots$, $u_{\sigma_r}-u_{\sigma_{r-1}}$
are linearly independent and hence form a basis for $M_{\mathbb Q}$, which implies: $\{u_\sigma\}_{\sigma\in\mathfrak C}$
is a basis for $\hat M_{\mathbb Q}$.
\end{proof}

\subsection{Generalities on valuations}

\begin{definition}
A {\it quasi-valuation} on $\mathbb K[\hat X_P]$ with 
values in a totally ordered abelian group $G$  is a map $\nu: \mathbb K[\hat X_P]\setminus \{0\}\rightarrow \textsc{G}$ 
which 

\begin{itemize} 
\item[(i)] has the minimum property, i.e., $\nu(f + g) \ge \min\{\nu(f), \nu(g)\}$;
\item[(ii)] is not affected by scalar multiplication: $\nu(c f) = \nu(f)$ for all $c\in \mathbb K^*$;
\item[(iii)] is quasi-additive, i.e. $\nu(fg) \ge \nu(f) + \nu(g)$.
\end{itemize}
\noindent
We assume in all cases $f,g \in \mathbb K[\hat X_P]\setminus \{0\}$, and if appropriate, $f+g\not=0$.
The quasi-valuation $\nu$ is called \emph{homogeneous} if  $\nu(f^p) =p \nu(f)$ for all $p\in \mathbb N$ and $f\in\mathbb K[\hat X_P]\setminus \{0\}$.
The quasi-valuation $\nu$ is called a \emph{valuation} if  it is additive, i.e., $\nu(fg)= \nu(f) + \nu(g)$ for all 
$f,g \in \mathbb K[\hat X_P]\setminus \{0\}$.
\end{definition}

Given a quasi-valuation $\nu$ and $\underline a\in G$, the subset 
$$\mathbb K[\hat X_P]_{\ge \underline a}=\{h\in \mathbb K[\hat X_P]\setminus\{0\}\mid \nu(h)\ge \underline{a}\}$$
is an ideal.
The ideal $ \mathbb K[\hat X_P]_{> \underline a}$ is defined similarly. The quotient  $F_{\underline a}=
\mathbb K[\hat X_P]_{\ge \underline a}/\mathbb K[\hat X_P]_{> \underline a}$
is called a \textit{leaf} of the quasi-valuation. The direct sum of the leaves: 
$\bigoplus_{\underline{a}\in G} F_{\underline a}$ inherits an algebra structure and is
called the \textit{associated graded algebra}, denote it by $\mathrm{gr}_\nu \mathbb K[\hat X_P]$.

Let $(X_\sigma,f_\sigma)_{\sigma\in A}$ be a combinatorial Seshadri stratification and
fix a maximal chain $\mathfrak C=\{\sigma_r>\ldots>\sigma_0\}$ in $A$. Let $\mu_r,\ldots,\mu_0$
be the weights of the extremal functions $f_{\sigma_r},\ldots, f_{\sigma_0}$. 
By Lemma~\ref{weights:lin:indep},
these weights form a $\mathbb Q$-basis for $\hat M_{\mathbb Q}$.
Let  $\{e_{\sigma_r},\ldots, e_{\sigma_0}\}$ be the standard basis
of $\mathbb Q^\mathfrak C$. We endow $\mathbb Q^\mathfrak C$ with the lexicographic order, i.e.
$(a_r,\ldots,a_0)>(b_r,\ldots,b_0)$ if $a_r>b_r$ or $a_r=b_r$ and $a_{r-1}>b_{r-1}$ and so on.
In this way $\mathbb Q^\mathfrak C$ becomes a totally ordered abelian group.

\begin{definition}\label{def:valuation}
For a $\hat T$-eigenfunction $g\in  \mathbb K[\hat X_P]$ of weight $\lambda_g$, let $\lambda_g=a_r\mu_r+\ldots +a_0\mu_0$
be the uniquely determined expression of $\lambda$ as a linear combination in terms of the basis given by the weights 
$\{\mu_r,\ldots,\mu_0\}$. We set $ \nu_{\mathfrak C}(g)=a_r e_{\sigma_r} + \ldots + a_0 e_{\sigma_0}$. For an arbitrary function 
 $g\in \mathbb K[\hat X_P]$, let $g=g_{\eta_1}+\ldots + g_{\eta_t}$ be a decomposition of $g$ into  $\hat T$-eigenfunctions
 of pairwise distinct weights $\eta_i$, $i=1,\ldots, t$. We define a map $\nu_{\mathfrak{C}}:\mathbb K[\hat X_P]\rightarrow \mathbb Q^\mathfrak C$ by:
$$
 \nu_{\mathfrak C}(g)=\min\{\nu_{\mathfrak C}(g_{\eta_i})\mid 1\leq i\leq t\}.
$$
\end{definition}
\begin{example}\label{extremal:valuation}
If $\sigma\in\mathfrak C$, then $\nu_{\mathfrak C}(f_\sigma)=e_\sigma$.
\end{example}
\begin{remark}\label{total:order:on:S}
The $\hat T$-weight spaces are one dimensional. If $g$ is a $\hat T$-eigenfunction,
then the value $\nu_{\mathfrak C}(g)$ on a $\hat T$-eigenvector depends only on the $\hat T$-weight
of the function, and vice versa, the weight of $g$ can be reconstructed from $\nu_{\mathfrak C}(g)$. 
\end{remark}

\begin{lemma}
The map $\nu_{\mathfrak C}:\mathbb K[\hat X_P]\setminus\{0\}\rightarrow \mathbb Q^\mathfrak C$ is a valuation with at most one-dimensional leaves.
\end{lemma}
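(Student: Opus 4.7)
The plan is to exploit two structural facts set up earlier: (a) $\mathbb K[\hat X_P]$ is multigraded by the $\hat T$-weights with one-dimensional weight spaces $\mathbb K f_{m,\eta}$ indexed by $(m,\eta)\in S$ (Lemma~\ref{explicit:basis}), and (b) on a $\hat T$-eigenfunction $g$ of weight $\lambda_g\in\hat M$, the value $\nu_{\mathfrak C}(g)$ depends only on $\lambda_g$ and is the image of $\lambda_g$ under the $\mathbb Q$-linear isomorphism $\hat M_{\mathbb Q}\to \mathbb Q^{\mathfrak C}$ that reads off coordinates in the basis $\{\mu_{\sigma_0},\ldots,\mu_{\sigma_r}\}$ (which is a basis by Lemma~\ref{weights:lin:indep}). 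In particular this map is injective on weights, so distinct $\hat T$-weights produce distinct values in $\mathbb Q^{\mathfrak C}$ (Remark~\ref{total:order:on:S}).

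Scalar invariance is immediate. For the minimum property, I would write $f=\sum_i f_{\eta_i}$ and $g=\sum_j g_{\eta'_j}$ as decompositions into $\hat T$-eigenfunctions with pairwise distinct weights, collect the weights occurring in $f+g$ (some may cancel), and observe that the surviving weights form a subset of $\{\eta_i\}\cup\{\eta'_j\}$, so $\nu_{\mathfrak C}(f+g)\ge \min\{\nu_{\mathfrak C}(f),\nu_{\mathfrak C}(g)\}$.

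The main point, and the only nontrivial step, is additivity $\nu_{\mathfrak C}(fg)=\nu_{\mathfrak C}(f)+\nu_{\mathfrak C}(g)$. With the same decompositions as above, the product expands as $fg=\sum_{i,j}f_{\eta_i}g_{\eta'_j}$, and by Lemma~\ref{explicit:basis} each $f_{\eta_i}g_{\eta'_j}$ is a nonzero scalar multiple of the basis element of weight $\eta_i+\eta'_j$. Let $i^*,j^*$ be the (unique) indices minimizing $\nu_{\mathfrak C}(\eta_i)$ and $\nu_{\mathfrak C}(\eta'_j)$ respectively; uniqueness holds because the $\eta_i$ are distinct and the map on weights is injective. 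By $\mathbb Q$-linearity, $\nu_{\mathfrak C}(\eta_i+\eta'_j)=\nu_{\mathfrak C}(\eta_i)+\nu_{\mathfrak C}(\eta'_j)\ge \nu_{\mathfrak C}(\eta_{i^*}+\eta'_{j^*})$ with equality only for $(i,j)=(i^*,j^*)$. Consequently, no other pair contributes to the weight $\eta_{i^*}+\eta'_{j^*}$ component of $fg$, so this component is nonzero (the single term does not cancel with anything) and achieves the minimum of $\nu_{\mathfrak C}$ on the weight-support of $fg$. This gives $\nu_{\mathfrak C}(fg)=\nu_{\mathfrak C}(\eta_{i^*})+\nu_{\mathfrak C}(\eta'_{j^*})=\nu_{\mathfrak C}(f)+\nu_{\mathfrak C}(g)$. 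The potential obstacle here, non-cancellation among cross terms having identical weight, is precisely defused by the injectivity of $\nu_{\mathfrak C}$ on $\hat M$ combined with the uniqueness of the minimizers $i^*,j^*$.

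Finally, for at most one-dimensional leaves: fix $\underline a\in\mathbb Q^{\mathfrak C}$. If no weight $\lambda\in\hat M$ satisfies $\nu_{\mathfrak C}(\lambda)=\underline a$, then by injectivity any $f\in\mathbb K[\hat X_P]_{\ge \underline a}$ already lies in $\mathbb K[\hat X_P]_{> \underline a}$, and the leaf is zero. Otherwise there is a unique such $\lambda$, and any $f\in\mathbb K[\hat X_P]_{\ge\underline a}$ has the form $f=c f_{\lambda}+h$ with $h\in\mathbb K[\hat X_P]_{>\underline a}$; the class of $f$ in the leaf is determined by $c\in\mathbb K$, giving a leaf of dimension at most one.
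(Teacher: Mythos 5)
Your proposal is correct and follows essentially the same approach as the paper's proof: decompose both factors into $\hat T$-eigenfunctions, use the fact that $\nu_{\mathfrak C}$ is linear in the $\hat T$-weight (hence additive on eigenfunctions), and appeal to the uniqueness of the minimizing weight to rule out cancellation. Your write-up makes the non-cancellation step explicit, where the paper passes over it with ``It follows,'' and your argument for one-dimensional leaves is the same appeal to one-dimensional weight spaces as in the paper's Remark~\ref{total:order:on:S}.
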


\begin{proof}
The map $ \nu_{\mathfrak C}$ has the minimum property by construction, and it is not affected by non-zero scalar multiplication.
If $g=g_{\eta_0}+\ldots + g_{\eta_t}$ and $h=h_{\omega_0}+\ldots + h_{\omega_s}$ are decompositions 
of $g$ and $h$ into pairwise distinct $\hat T$-eigenfunctions of weight $\eta_i$, $i=0,\ldots, t$, respectively $\omega_j$, $j=0,\ldots s$,
such that $\nu_\mathfrak C(g)=\nu_\mathfrak C(g_{\eta_0})$ and 
$\nu_\mathfrak C(h)=\nu_\mathfrak C(h_{\omega_0})$,
then $gh$ is a sum of $\hat T$-eigenfunctions: $g_{\eta_i}h_{\nu_j}$, $i=0,\ldots, t$, $j=0,\ldots s$,
and $\nu_\mathfrak C(g_{\eta_i}h_{\omega_j})=\nu_\mathfrak C(g_{\eta_i})+\nu_\mathfrak C(h_{\omega_j})$. It follows:
$\nu_\mathfrak C(gh)=\nu_\mathfrak C(g_{\eta_0})+\nu_\mathfrak C(h_{\omega_0})=\nu_\mathfrak C(g)+\nu_\mathfrak C(h)$, so the map is additive and hence a valuation. Positive dimensional leaves are just $\hat T$-weight spaces by 
Remark~\ref{total:order:on:S}, so they are one-dimensional.
\end{proof}

We extend the valuation to $\nu_\mathfrak C: \mathbb K(\hat X)\setminus \{0\}\rightarrow \mathbb Q^\mathfrak C$
by $\nu_\mathfrak C(\frac{g}{h})=\nu_\mathfrak C(g)-\nu_\mathfrak C(h)$.
Denote by $F_\mathfrak C$ the product $\prod_{\sigma\in\mathfrak C} f_\sigma$ and let 
$\mathbb K[\hat X_P]_{F_\mathfrak C}\subseteq \mathbb K(\hat X)$ be the 
localization of the homogeneous coordinate ring. It is the coordinate ring of 
$U_{F_\mathfrak C}=\{x\in\hat X_P\mid F_\mathfrak C(x)\not=0\}$,
an affine toric $\hat T$-variety. Note that the valuation may have negative entries.
\begin{proposition}\label{prop:filtration} 
Let $g\in\mathbb K[\hat X_P]\setminus\{0\}$.
For $\nu_{\mathfrak C}(g)=(a_\sigma)_{\sigma\in\mathfrak C}$ let $k>0$ be a positive integer such that $ka_\sigma\in\mathbb Z$, $\sigma\in\mathfrak C$.
In $\mathbb K[\hat X_P]_{F_\mathfrak C}$ one has: $g^k=c\prod_{\sigma\in\mathfrak C} f^{ka_\sigma}_{\sigma}+h$, where $c\in\mathbb K^*$
and either $h=0$, or $\nu_{\mathfrak C}(h)> (ka_\sigma)_{\sigma\in\mathfrak C}$. 
\end{proposition}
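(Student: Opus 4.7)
The plan is to use the multinomial expansion of $g^k$ in terms of the $\hat T$-eigendecomposition of $g$, to identify the leading term with the prescribed product of the $f_\sigma$'s up to a scalar, and to show that the remaining terms all have strictly larger $\nu_\mathfrak C$-value.

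First, I would write $g = g_{\eta_0} + g_{\eta_1} + \cdots + g_{\eta_t}$ with pairwise distinct $\hat T$-weights $\eta_j$, and reorder so that $\nu_\mathfrak C(g) = \nu_\mathfrak C(g_{\eta_0}) = (a_\sigma)_{\sigma \in \mathfrak C}$. By Remark~\ref{total:order:on:S} distinct $\hat T$-weights produce distinct values of $\nu_\mathfrak C$, so the minimum in Definition~\ref{def:valuation} is attained uniquely at $j=0$ and $\nu_\mathfrak C(g_{\eta_j}) > \nu_\mathfrak C(g_{\eta_0})$ strictly in the lexicographic order for every $j \geq 1$. In addition, $\eta_0 = \sum_\sigma a_\sigma \mu_\sigma$ by the very definition of $\nu_\mathfrak C$. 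The multinomial expansion then reads
\[
g^k = g_{\eta_0}^k + \sum \binom{k}{i_0,\ldots,i_t}\, g_{\eta_0}^{i_0} \cdots g_{\eta_t}^{i_t},
\]
the sum running over tuples $(i_0,\ldots,i_t)$ with $\sum_j i_j = k$ and at least one $i_j > 0$ for $j \geq 1$.

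Next, I would match $g_{\eta_0}^k$ with a scalar multiple of $\prod_{\sigma \in \mathfrak C} f_\sigma^{k a_\sigma}$ inside $\mathbb K[\hat X_P]_{F_\mathfrak C}$. Both are $\hat T$-eigenfunctions of the same weight $k\eta_0 = \sum_\sigma k a_\sigma \mu_\sigma$, and the product lies in the localization because some $k a_\sigma$ may be negative. By Lemma~\ref{explicit:basis} each $\hat T$-weight space of $\mathbb K[\hat X_P]$ is one-dimensional, and this property transfers to $\mathbb K[\hat X_P]_{F_\mathfrak C}$: every eigenfunction in the localization has the form $h/F_\mathfrak C^N$ for a $\hat T$-eigenfunction $h \in \mathbb K[\hat X_P]$ of appropriately shifted weight, and different choices of $N$ simply rescale by powers of the unit $F_\mathfrak C$. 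Consequently $g_{\eta_0}^k = c \prod_\sigma f_\sigma^{k a_\sigma}$ for a unique $c \in \mathbb K^*$.

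Finally, I would show that each tail term has $\nu_\mathfrak C$-value strictly greater than $(k a_\sigma)_{\sigma \in \mathfrak C}$. Each such term $g_{\eta_0}^{i_0}\cdots g_{\eta_t}^{i_t}$ is itself a $\hat T$-eigenfunction of weight $\sum_j i_j \eta_j$, and $\nu_\mathfrak C$ is additive on $\hat T$-weights by Definition~\ref{def:valuation}, so its valuation equals $\sum_j i_j \nu_\mathfrak C(g_{\eta_j})$. Since $\mathbb Q^\mathfrak C$ is a totally ordered abelian group under the lex order, the strict inequalities $\nu_\mathfrak C(g_{\eta_j}) > \nu_\mathfrak C(g_{\eta_0})$ for $j \geq 1$, combined with at least one positive $i_j$, add up to
\[
\sum_j i_j \nu_\mathfrak C(g_{\eta_j}) > k \nu_\mathfrak C(g_{\eta_0}) = (k a_\sigma)_{\sigma \in \mathfrak C}.
\]
Taking $h$ to be the sum of these tail contributions, the minimum property of $\nu_\mathfrak C$ then gives $\nu_\mathfrak C(h) > (k a_\sigma)_{\sigma \in \mathfrak C}$ whenever $h \neq 0$. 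The main technical point to verify carefully is the extension of one-dimensionality of $\hat T$-eigenspaces from $\mathbb K[\hat X_P]$ to its localization at $F_\mathfrak C$; once that is settled, the identification of the leading term and the lex bookkeeping for the tail are both mechanical.
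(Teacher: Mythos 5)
Your proof is correct and follows essentially the same route as the paper: decompose $g$ into $\hat T$-eigenfunctions, isolate the lowest term $g_{\eta_0}$, match $g_{\eta_0}^k$ with $c\prod_\sigma f_\sigma^{ka_\sigma}$ via one-dimensionality of $\hat T$-weight spaces in the localization, and bound the remaining summands. You supply somewhat more detail than the paper (the multinomial bookkeeping and the explicit verification that one-dimensionality of weight spaces persists in $\mathbb K[\hat X_P]_{F_\mathfrak C}$), but the underlying argument is identical.
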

\begin{proof}
Let $g=g_{\eta_0}+\ldots + g_{\eta_t}$ be a decomposition of $g$ into $\hat T$-eigenfunctions of pairwise different weights $\eta_i$, $i=0,\ldots, t$.
Without loss of generality we assume $\nu_{\mathfrak C}(g)=\nu_{\mathfrak C}(g_{\eta_0})$. It follows that 
$\nu_{\mathfrak C}(g^k)=k\nu_{\mathfrak C}(g_{\eta_0})$. This implies that $g_{\eta_0}^k$ and 
$\prod_{\sigma\in\mathfrak C} f_\sigma^{ka_\sigma}$ are $\hat T$-eigenfunctions with the same valuation and hence the same $\hat T$-weight.
So in $\mathbb K[\hat X_P]_{F_\mathfrak C}$ they must be nonzero scalar multiples of each other. The value of $\nu_\mathfrak C$ 
on the remaining summands in $g^k=(\sum_{i=0}^t g_{\eta_i})^k$ is strictly larger than $\nu_{\mathfrak C}(g^k_{\eta_0})$, which proves the claim.
\end{proof}
As an immediate consequence of Proposition~\ref{prop:filtration} we see:
\begin{corollary}\label{weight:and:degree:formula1}
Let $g\in \mathbb K[\hat X_P]\setminus\{0\}$ be a $\hat T$-eigenfunction of $\hat T$-weight $\lambda_g$ and 
suppose $\nu_{\mathfrak C}(g)=(a_\sigma)_{\sigma\in\mathfrak C}$. If $k$ is as in Proposition~\ref{prop:filtration}, 
then $g^k=c\prod_{\sigma\in\mathfrak C} f^{ka_\sigma}_{\sigma}$ in
$\mathbb K[\hat X_P]_{F_\mathfrak C}$ for some $c\in\mathbb K^*$, 
$\deg g=\sum_{\sigma\in\mathfrak C} a_\sigma\deg f_{\sigma}$ and $\lambda_g=\sum_{\sigma\in\mathfrak C} 
a_\sigma\mu_\sigma$.
\end{corollary}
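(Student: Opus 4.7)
The corollary is presented as an immediate consequence of Proposition~\ref{prop:filtration}, so the plan is essentially to run Proposition~\ref{prop:filtration} with the additional hypothesis that $g$ itself is a $\hat T$-eigenfunction and then read off the weight and degree identities by comparing $\hat T$-weights on both sides.

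First, since $g$ is a $\hat T$-eigenfunction, its decomposition into pairwise non-proportional $\hat T$-eigencomponents used in Definition~\ref{def:valuation} reduces to $g=g_{\eta_0}$ with $\eta_0=\lambda_g$. Apply Proposition~\ref{prop:filtration} to obtain an expression
\[
g^k \;=\; c\prod_{\sigma\in\mathfrak C} f_\sigma^{ka_\sigma} \;+\; h
\]
in $\mathbb K[\hat X_P]_{F_\mathfrak C}$, with $c\in\mathbb K^*$, and either $h=0$ or $\nu_\mathfrak C(h)>(ka_\sigma)_{\sigma\in\mathfrak C}$.

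Second, I would argue that $h$ must in fact be zero. The element $g^k$ is a $\hat T$-eigenfunction of weight $k\lambda_g$, and $\prod_\sigma f_\sigma^{ka_\sigma}$ is a $\hat T$-eigenfunction of weight $\sum_\sigma ka_\sigma\mu_\sigma$. By Remark~\ref{total:order:on:S}, two $\hat T$-eigenfunctions having the same value under $\nu_\mathfrak C$ must share the same $\hat T$-weight; since $\nu_\mathfrak C(g^k)=k\nu_\mathfrak C(g)=\nu_\mathfrak C\!\left(c\prod_\sigma f_\sigma^{ka_\sigma}\right)$ by construction, these two $\hat T$-weights agree. Therefore $h=g^k-c\prod_\sigma f_\sigma^{ka_\sigma}$ is itself a $\hat T$-eigenfunction of that common weight (or zero). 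But then, again by Remark~\ref{total:order:on:S}, if $h\neq 0$ we would have $\nu_\mathfrak C(h)=\nu_\mathfrak C(g^k)=(ka_\sigma)_{\sigma\in\mathfrak C}$, contradicting the strict inequality in Proposition~\ref{prop:filtration}. Hence $h=0$ and the first assertion follows.

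Finally, comparing the $\hat T$-weights of both sides of $g^k=c\prod_\sigma f_\sigma^{ka_\sigma}$ yields $k\lambda_g=\sum_\sigma ka_\sigma\mu_\sigma$, so after dividing by $k$ one obtains $\lambda_g=\sum_\sigma a_\sigma\mu_\sigma$. Projecting this identity onto the first component of $\hat M=\mathbb Z\oplus M$ and recalling that $\mu_\sigma=(-\deg f_\sigma,\tilde\mu_\sigma)$ and $\lambda_g=(-\deg g,\ldots)$ immediately gives $\deg g=\sum_\sigma a_\sigma\deg f_\sigma$. There is no genuine obstacle here; the only point requiring care is the forcing of $h=0$, which relies on the fact that $\hat T$-weight spaces are one-dimensional so a $\hat T$-eigenfunction is determined by $\nu_\mathfrak C$ up to a scalar.
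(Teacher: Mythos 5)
Your proof is correct and essentially follows the same route the paper intends: the corollary is presented as an immediate consequence of Proposition~\ref{prop:filtration}, and, as you note, the eigenfunction hypothesis makes the decomposition in that proposition's proof trivial. A marginally more direct way to force $h=0$ is to observe that since $g=g_{\eta_0}$ has a single $\hat T$-eigencomponent, $g^k=g_{\eta_0}^k$ and the ``remaining summands'' in the proof of Proposition~\ref{prop:filtration} are simply absent; your weight-comparison contradiction argument (using Remark~\ref{total:order:on:S} and the one-dimensionality of $\hat T$-weight spaces) reaches the same conclusion and is perfectly valid.
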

We get also some information about the value of  $\nu_{\mathfrak C}(f_\tau)$ for $\tau\not\in\mathfrak C$:
\begin{corollary}\label{weight:and:degree:formula2}
For $\tau\in A$, $\tau\not\in\mathfrak C$, let $\nu_{\mathfrak C}(f_\tau)=(a_{\sigma_i})_{0\le i\le r}$. 
If $\sigma_j \in\mathfrak C$ is minimal such that
$\sigma_j>\tau$, then $a_{\sigma_j}>0$ and $a_{\sigma_i}=0$ for $i>j$.
\end{corollary}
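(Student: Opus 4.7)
The plan is to translate the claim about the coefficients of $\nu_{\mathfrak C}(f_\tau)$ in the basis of extremal weights into an affine-combination statement about the markings $u_\sigma = -\mu_\sigma/\deg f_\sigma \in \sigma^o$, and then to read off the result from the incidence combinatorics of the face poset of $P$. Writing $d = \deg f_\tau$ and $d_i = \deg f_{\sigma_i}$, Corollary~\ref{weight:and:degree:formula1} applied to $g = f_\tau$ gives $d = \sum_{i} a_{\sigma_i} d_i$ and $\mu_\tau = \sum_{i} a_{\sigma_i} \mu_{\sigma_i}$. Dividing by $-d$ and using the identification of $P$ with its image in $\{1\}\times M_\mathbb R \subset \hat M_\mathbb R$, this becomes
$$u_\tau = \sum_{i=0}^{r} b_i\, u_{\sigma_i}, \qquad b_i := \frac{a_{\sigma_i} d_i}{d}, \qquad \sum_{i=0}^{r} b_i = 1.$$
Since $d, d_i > 0$, the target claims are equivalent to $b_i = 0$ for $i > j$ and $b_j > 0$.

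For the vanishing, I use Lemma~\ref{weights:lin:indep}: the $u_{\sigma_i}$ form a $\mathbb Q$-basis of $\hat M_\mathbb Q$, hence are affinely independent as points of $M_\mathbb Q$. Because $\mathfrak C$ is a maximal chain in the face poset of $P$, one has $\dim \sigma_j = j$, so the $j+1$ affinely independent points $u_{\sigma_0},\ldots,u_{\sigma_j}$, all of which lie in $\mathrm{aff}(\sigma_j)$, form an affine basis of $\mathrm{aff}(\sigma_j)$. The point $u_\tau$ lies in $\tau \subseteq \sigma_j \subseteq \mathrm{aff}(\sigma_j)$, so it admits a unique expression as an affine combination of $u_{\sigma_0},\ldots,u_{\sigma_j}$. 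By uniqueness of the global expansion displayed above, the coefficients $b_i$ for $i > j$ must vanish.

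For the positivity of $b_j$, I would choose a supporting hyperplane $H = \{\langle\cdot,v\rangle = c\}$ of $P$ realizing $\sigma_{j-1}$, i.e.\ $P\cap H = \sigma_{j-1}$ and $P \subseteq \{\langle\cdot,v\rangle \ge c\}$. Then $\langle u_{\sigma_i}, v\rangle = c$ for $i < j$ since $u_{\sigma_i}\in\sigma_i\subseteq\sigma_{j-1}$, and $\langle u_{\sigma_j}, v\rangle > c$ since $u_{\sigma_j}\in\sigma_j^o$ is disjoint from the proper face $\sigma_{j-1}$. For $u_\tau\in\tau^o$, the equality $\langle u_\tau, v\rangle = c$ would place $u_\tau$ in $\tau^o\cap\sigma_{j-1}$, and the standard fact that the relative interior of a face of $P$ meets another face of $P$ only if the first is contained in the second would then force $\tau \le \sigma_{j-1}$, contradicting the minimality of $\sigma_j$. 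Evaluating $\langle\cdot,v\rangle$ on $u_\tau = \sum_{i\le j} b_i u_{\sigma_i}$ yields $\langle u_\tau, v\rangle - c = b_j(\langle u_{\sigma_j}, v\rangle - c)$, and the positivity of both sides gives $b_j > 0$. The delicate step I expect is this positivity argument: one must take the supporting hyperplane of $P$ that cuts out $\sigma_{j-1}$ (not merely a facet-defining hyperplane of $\sigma_j$), and then exploit the hypothesis $u_\tau\in\tau^o$ together with the minimality of $\sigma_j$ to obtain the strict inequality for $u_\tau$.
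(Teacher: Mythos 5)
Your proof is correct, and the positivity half takes a genuinely different route from the paper's. The vanishing half ($a_{\sigma_i}=0$ for $i>j$) is essentially the same in both: you and the paper both observe that $-\mu_\tau/\deg f_\tau$ lies in $\sigma_j$ and hence in the span of the $j+1$ markings $u_{\sigma_0},\ldots,u_{\sigma_j}$, and then invoke the (affine/linear) independence from Lemma~\ref{weights:lin:indep}. For $a_{\sigma_j}>0$, however, the paper argues algebraically: assuming $a_{\sigma_j}\le 0$, it uses Corollary~\ref{weight:and:degree:formula1} to produce two regular $\hat T$-eigenfunctions of the same weight --- one containing the factor $f_\tau^k$, which must vanish on $X_{\sigma_{j-1}}$ by Lemma~\ref{vanishing:set}, and one that visibly does not vanish there --- a contradiction. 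Your argument instead works entirely inside the polytope: you take the supporting hyperplane of $P$ cutting out $\sigma_{j-1}$, note that it annihilates the markings $u_{\sigma_0},\ldots,u_{\sigma_{j-1}}$ but is strictly positive on $u_{\sigma_j}$ and on $u_\tau$ (the latter thanks to the minimality of $\sigma_j$ and the standard fact that $\tau^o\cap\sigma_{j-1}=\emptyset$ when $\tau\not\le\sigma_{j-1}$), and then extract $b_j>0$ by evaluating the affine identity. Your route is more elementary and self-contained, staying at the level of convex geometry and avoiding the vanishing-loci machinery; the paper's route, on the other hand, fits the general ``Seshadri stratification'' framework where one reasons with extremal functions rather than with a fixed convex body, so it generalizes beyond the toric setting. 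One small point worth flagging in your write-up: you use the index $\sigma_{j-1}$, so you should note that $j\ge 1$ always holds here (since $\sigma_j$ strictly contains $\tau$, it cannot be a vertex), but this is immediate.
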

\begin{proof}
Let  $\mathfrak C=(\sigma_r,\ldots,\sigma_0)$. By assumption, we have $\frac{-\mu_\tau}{\deg f_\tau}\in\tau^o\subseteq \sigma_j$,
which implies:  $\frac{-\mu_\tau}{\deg f_\tau}$ is in the linear $\mathbb Q$-span of the $\{\frac{\mu_{\sigma_i}}{\deg f_{\sigma_i}}\mid i=0,\ldots,j\}$, 
and hence $a_{\sigma_i}=0$ for $i>j$.
If $a_{\sigma_j}\le 0$, then let $k>0$ be as in Corollary~\ref{weight:and:degree:formula1} and set $N\gg 0$. By \emph{ibid.},
the regular functions $f_\tau^k f_{\sigma_j}^{\vert ka_{\sigma_j}\vert}(\prod_{i=0,\ldots,j-1} f_{\sigma_i})^N$ and $\prod_{i=0,\ldots,j-1} f^{ka_{\sigma_i}+N}_{\sigma_i}$ 
have the same $\hat T$-weight, so they are equal in $\mathbb K[\hat X_P]$ up to some nonzero
scalar multiple. But this is not possible: the first function vanishes identically
on $X_{\sigma_{j-1}}$ because $\sigma_{j-1}\not\ge \tau$ (Lemma~\ref{vanishing:set}),
but the second not. It follows: $a_{\sigma_j}>0$.
\end{proof}
\subsection{The valuation monoid}
The additivity of $\nu_{\mathfrak C}$ implies that 
$$\mathbb V_\mathfrak C(\hat{X}_P)=\{ \nu_{\mathfrak C}(h)\mid h\in  \mathbb K[\hat X_P]\setminus\{0\}\}\subseteq \mathbb Q^{\mathfrak C}$$
is a submonoid of  $\mathbb Q^{\mathfrak C}$, called the \emph{valuation monoid}.
For an arbitrary embedded projective variety $Y\subseteq \mathbb P(V)$ a valuation monoid
$\mathbb V(\hat{Y})$ is typically used to 
construct (if possible) a flat degeneration of $Y$ into a toric variety, having the valuation monoid as weight monoid.
So it is no surprise that in the case where $Y=X_P$ is a toric variety we do not get anything really new.  By 
Lemma~\ref{explicit:basis} and  Corollary~\ref{weight:and:degree:formula1} we see:
\begin{corollary}\label{weight:versus:valuation}
Let $\mu_r,\ldots,\mu_0$
be the weights of the extremal functions $f_{\sigma_r},\ldots, f_{\sigma_0}$.
The map $\psi_{\mathfrak C}: \mathbb V_{\mathfrak C}(\hat X_P)\rightarrow S$,
$(a_r,\ldots,a_0)\mapsto \sum_{j=0}^r a_j\mu_j$, is an isomorphism of monoids. 
\end{corollary}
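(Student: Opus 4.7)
The plan is to verify the four standard ingredients of a monoid isomorphism---well-definedness of the target, preservation of addition, injectivity, and surjectivity---each of which should follow almost directly from the results already established in the section.

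The key preliminary reduction is that every element of $\mathbb V_{\mathfrak C}(\hat X_P)$ is actually attained on a $\hat T$-eigenfunction. Indeed, for an arbitrary $g = g_{\eta_0}+\cdots+g_{\eta_t}$ decomposed into $\hat T$-eigencomponents of pairwise distinct weights, the definition of $\nu_{\mathfrak C}$ as a minimum over finitely many terms forces $\nu_{\mathfrak C}(g)=\nu_{\mathfrak C}(g_{\eta_{i_0}})$ for some index $i_0$. So for any $\mathbf a=(a_r,\ldots,a_0)\in\mathbb V_{\mathfrak C}(\hat X_P)$ there is a $\hat T$-eigenfunction $h\in\mathbb K[\hat X_P]$ with $\nu_{\mathfrak C}(h)=\mathbf a$, and Corollary~\ref{weight:and:degree:formula1} identifies $\psi_{\mathfrak C}(\mathbf a)=\sum_{j=0}^r a_j\mu_j$ with the $\hat T$-weight $\lambda_h$. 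By Lemma~\ref{explicit:basis}, $\hat T$-weights of eigenfunctions in $\mathbb K[\hat X_P]$ are indexed by $S$ under the sign convention fixed in Section~\ref{homogeneous:coordinate}, which shows that $\psi_{\mathfrak C}$ genuinely takes values in $S$ and is therefore well defined.

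For the homomorphism property, I would observe that $\psi_{\mathfrak C}$ is the restriction to $\mathbb V_{\mathfrak C}(\hat X_P)$ of the ambient $\mathbb Q$-linear map $\mathbb Q^{\mathfrak C}\to\hat M_{\mathbb Q}$, $(a_j)\mapsto\sum_{j=0}^r a_j\mu_j$, so additivity is automatic. Injectivity is then immediate from Lemma~\ref{weights:lin:indep}: since $\{\mu_0,\ldots,\mu_r\}$ is a $\mathbb Q$-basis of $\hat M_{\mathbb Q}$, that ambient linear map is an isomorphism of $\mathbb Q$-vector spaces, and its restriction to any subset is injective.

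Surjectivity is handled by running the first step in reverse. Given $(m,\eta)\in S$, Lemma~\ref{explicit:basis} produces the basis element $f_{m,\eta}\in\mathbb K[\hat X_P]$, an eigenfunction whose $\hat T$-weight corresponds to $(m,\eta)$; its valuation $\nu_{\mathfrak C}(f_{m,\eta})$ then lies in $\mathbb V_{\mathfrak C}(\hat X_P)$ and is sent to $(m,\eta)$ by $\psi_{\mathfrak C}$, again by Corollary~\ref{weight:and:degree:formula1}. The only genuine ``obstacle'' is the bookkeeping required to keep the sign convention for $\hat T$-weights consistent with the definition of $S$ as generated by $(1,\chi)$; once this identification is pinned down, the proof is essentially a sequence of citations, and the reduction to $\hat T$-eigenfunctions is the only step with any content.
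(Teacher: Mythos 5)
Your proof is correct and follows essentially the same route as the paper, which states the corollary as an immediate consequence of Lemma~\ref{explicit:basis} and Corollary~\ref{weight:and:degree:formula1} without further elaboration. Your fleshed-out version correctly identifies the reduction to $\hat T$-eigenfunctions as the one step with real content (the rest being linear algebra over the basis from Lemma~\ref{weights:lin:indep}), and you rightly flag the sign convention between $\hat T$-weights $(-m,-\eta)$ and elements $(m,\eta)\in S$ as a bookkeeping point that the paper itself leaves implicit.
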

A new point of view comes in Section~\ref{quasi-valuation:sec}. Let us just remark that for every maximal chain
we have a special submonoid: $\mathbb V_{\mathfrak C}(\hat X_P)^+=\mathbb V_{\mathfrak C}(\hat X_P)\cap \mathbb Q^\mathfrak C_{\ge 0}$,
the intersection of the valuation monoid with the positive orthant. We will see that the images 
$\psi_{\mathfrak C}(\mathbb V_{\mathfrak C}(\hat X_P)^+)\subseteq S$
define a decomposition of $S$ as $\mathfrak C$ is running over all maximal chains in $A$. 
 
    \begin{example}
    Let $X_P$ the toric variety with the combinatorial Seshadri stratification of Example \ref{square1x1}. Then, using Corollary \ref{weight:versus:valuation}, is easy to see that, for every maximal chain $\mathfrak C$, the valuation monoid $\mathbb V_{\mathfrak C}(\hat{X}_P)$ is the monoid
    $$
    \left\{ (a_2,a_1,a_0)\in \mathbb Q^3 \mid  
	2a_2\in\mathbb Z, \
	a_1\in\mathbb Z,\ 
	a_0\in\mathbb Z
     \right\}.
    $$
	\end{example}

	\begin{example}
		Let $X_P$ the toric variety with the combinatorial Sesahdri stratification of Example \ref{square2x2}. By Corollary \ref{weight:versus:valuation} one can see that, for every maximal chain $\mathfrak C$, the monoid  $\mathbb V_{\mathfrak C}(\hat X_P)$ is $\mathbb Z^3$.
	\end{example}

\section{Markings and triangulations of $P$ indexed by flags}\label{Sec:Triangulations:and:Flags}
A flag of faces in $A$ is a chain in $A$, i.e. a totally ordered subset of the form
 $\sigma_1\subsetneq \ldots\subsetneq \sigma_s$, where the $\sigma_i\in A$, $i=1,\ldots,s$, 
 are faces of $P$. Let $\mathcal F(A)$ be the set of all flags in $A$, i.e. the set of all totally ordered
subsets of $A$.
\begin{definition}\label{triangulation:by:flags1}
A \textit{triangulation of $P$ indexed by flags of faces} is a triangulation  $\mathcal T=(\Delta_C)_{C\in \mathcal F(A)}$
of $P$ with rational vertices and simplices $\Delta_C$ indexed by $\mathcal F(A)$, such that
\begin{enumerate}
\item the relative interior of every face $\sigma\in A$ contains exactly one vertex $\mathbf v_\sigma\in M_{\mathbb Q}$ of $\mathcal T$,
\item $\Delta_C$ is the convex hull of the $\mathbf v_\sigma$, $\sigma\in C$.
\end{enumerate}
\end{definition}
\begin{example}
The barycentric subdivision of $P$ is a triangulation of $P$ indexed by flags.
\end{example}

Given a triangulation indexed by flags $\mathcal T=(\Delta_C)_{C\in \mathcal F(A)}$,
the collection $(\mathbf v_\sigma)_{\sigma\in A}$ of vertices defines  a marking $\mathfrak{m}_{\mathcal T}$ of the faces of $P$.
Vice versa, let $\mathfrak{m}=(u_\sigma)_{\sigma\in A}$ be a marking of the faces. For a flag $C\in \mathcal F(A)$ let
$\Delta_C\subseteq P$ be the convex hull of the $u_\sigma$, $\sigma\in C$. 
By Lemma~\ref{weights:lin:indep}, the $u_\sigma$, $\sigma\in C$, are linearly independent and hence $\Delta_C$ is a 
simplex of dimension $\vert C\vert-1$. We attach to a marking of the faces $\mathfrak{m}=(u_\sigma)_{\sigma\in A}$ 
the collection of simplices  $\mathcal T_\mathfrak{m}=(\Delta_C)_{C\in \mathcal F(A)}$. 

\begin{lemma}
The collection of simplices $\mathcal T_\mathfrak{m}=(\Delta_C)_{C\in \mathcal F(A)}$ is a triangulation of $P$.
\end{lemma}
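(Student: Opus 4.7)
I will prove this by induction on $n = \dim P$, the base case $n = 0$ being trivial. That each $\Delta_C$ is a simplex follows from Lemma~\ref{weights:lin:indep}, since the $u_\sigma$ for $\sigma$ in a chain are affinely independent. The key auxiliary observation is that $\Delta_C \subseteq \sigma$ whenever $\max C \leq \sigma$, by convexity and $u_\tau \in \sigma$ for $\tau \leq \sigma$. In particular, for every proper face $\tau \subsetneq P$ the marking restricts to a marking of $\tau$ (viewed as a full-dimensional lattice polytope in its affine span, cf.\ Remark~\ref{T:equivariant:Compatible:subvariety}), and by the inductive hypothesis the simplices $\Delta_C$ with $C \in \mathcal F(A_\tau)$ triangulate $\tau$. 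These triangulations agree on the intersections $\tau_1 \cap \tau_2$, as they are determined intrinsically by the restricted marking, and therefore glue to a triangulation of the boundary $\partial P$ by the simplices $\Delta_C$ indexed by flags of proper faces of $P$.

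The remaining simplices, those $\Delta_C$ with $P \in C$, are by construction geometric joins $\mathrm{conv}(u_P, \Delta_{C \setminus \{P\}})$ of the interior apex $u_P$ with a boundary simplex. Since $u_P$ lies in the relative interior of $P$, every point $p \in P$ lies on a unique segment from $u_P$ to a point $q \in \partial P$ (or equals $u_P$); locating $q$ in some boundary simplex $\Delta_{C'}$ places $p$ in $\Delta_{C' \cup \{P\}}$, which yields the covering property $P = \bigcup_{C \in \mathcal F(A)} \Delta_C$.

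For the intersection property $\Delta_{C_1} \cap \Delta_{C_2} = \Delta_{C_1 \cap C_2}$, set $\tau_i = \max C_i$, so $\Delta_{C_1} \cap \Delta_{C_2} \subseteq \tau_1 \cap \tau_2$. If this intersection is empty the claim is immediate; if $\rho := \tau_1 \cap \tau_2$ is a proper face of $P$, I would pick a supporting linear functional $\ell \geq 0$ on $P$ with $\ell^{-1}(0) \cap P = \rho$. Since $u_\sigma \in \sigma^o$ and $\sigma^o \cap \rho = \emptyset$ for $\sigma \not\leq \rho$, one has $\ell(u_\sigma) > 0$ in that case. Hence any convex combination in $\Delta_{C_i}$ landing in $\rho$ must have vanishing coefficients on $\{\sigma \in C_i \mid \sigma \not\leq \rho\}$, yielding $\Delta_{C_i} \cap \rho = \Delta_{C_i \cap A_\rho}$. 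Applying the inductive hypothesis to $\rho$ and using $C_1 \cap C_2 \subseteq A_\rho$ then concludes this case. When $P \in C_1 \cup C_2$, an analogous hyperplane argument (applied to a proper face of $P$ containing the intersection) combined with the cone decomposition reduces matters to the boundary case just treated. The main obstacle is precisely this intersection computation for non-comparable maximal faces: the supporting-hyperplane identification of which vertices of $\Delta_{C_i}$ can contribute to a point of $\rho$ is what makes the combinatorial identity $\Delta_{C_1} \cap \Delta_{C_2} = \Delta_{C_1 \cap C_2}$ match the geometric intersection.
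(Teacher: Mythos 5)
Your argument follows the same route as the paper's (very terse) proof: induct on the dimension, use the inductive hypothesis to triangulate the boundary via the restricted markings, and cone from the interior apex $u_P$. You spell out the covering and intersection axioms in detail (in particular the supporting-hyperplane computation showing $\Delta_{C_i}\cap\rho=\Delta_{C_i\cap A_\rho}$), whereas the paper leaves these verifications implicit, but the underlying idea is identical.
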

\begin{proof}
This is evident for $r=0$. The general case follows by induction on the dimension.

For each facet of $P$, we have a marking that, by induction, gives a triangulation of the facet. The collection of simplices $\mathcal T_m$ is the triangulation given by the cones with vertex $u_P$ over these triangulations of the facets.


\end{proof}

\begin{example}
Let $P$ be the polytope as in Example \ref{square2x2}. 
Let $\mathfrak m$ be the marking associated with the extremal functions given in Example \ref{square2x2}.
The triangulation associated with $\mathfrak m$ is
$$
\begin{tikzpicture}
    \fill[lightgray!20] (0,0) rectangle  (2,2);
			\filldraw  [black] (0,0) circle (2pt);
			\filldraw [black] (0,1) circle (2pt);
			\filldraw  [black] (1,0) circle (2pt);
				\filldraw [black] (2,1) circle (2pt);
			\filldraw  [black] (1,2) circle (2pt);
			\filldraw [black] (1,1) circle (2pt);
			\filldraw [black] (0,2) circle (2pt);
			\filldraw  [black] (2,0) circle (2pt);
			\filldraw [black] (2,2) circle (2pt);
			\draw (0,0) rectangle (2,2);
                \draw (0,0) -- (1,1);
                \draw (1,0) -- (1,1);
                \draw (0,1) -- (1,1);
                \draw (2,0) -- (1,1);
                \draw (2,1) -- (1,1);
                \draw (0,2) -- (1,1);
                \draw (1,2) -- (1,1);
                \draw (2,2) -- (1,1);
\end{tikzpicture}
$$
\end{example}

Summarizing we get together with Lemma~\ref{marking:bijection}:
\begin{theorem}\label{SeshStrat:triang:marking}
There exists a bijection between the set of equivalence classes of combinatorial Seshadri stratifications
and the set of triangulations  of $P$ indexed by flags of faces.
\end{theorem}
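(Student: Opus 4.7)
The plan is to combine two bijections, one of which is already established and one of which is essentially built into the definitions. By Lemma~\ref{marking:bijection}, the map $\Psi$ sending a combinatorial Seshadri stratification $(X_\sigma,f_\sigma)_{\sigma\in A}$ to its associated marking $\mathfrak{m}_{\mathbf f}=(-\mu_\sigma/\deg f_\sigma)_{\sigma\in A}$ already induces a bijection between the set of equivalence classes of combinatorial Seshadri stratifications on $X_P$ and the set of markings of the faces of $P$. So the theorem reduces to producing a bijection between markings of the faces of $P$ and triangulations of $P$ indexed by flags.

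For the forward direction, I would use the construction already in the text: given a marking $\mathfrak{m}=(u_\sigma)_{\sigma\in A}$, assign the collection $\mathcal T_{\mathfrak{m}}=(\Delta_C)_{C\in \mathcal F(A)}$, where $\Delta_C$ is the convex hull of $\{u_\sigma\mid\sigma\in C\}$. The lemma immediately preceding the theorem shows this is a triangulation of $P$, and Lemma~\ref{weights:lin:indep} guarantees that each $\Delta_C$ is a genuine simplex. Property (1) of Definition~\ref{triangulation:by:flags1} is immediate since $u_\sigma\in\sigma^o$, and property (2) is the defining formula for $\Delta_C$.

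For the inverse direction, given a triangulation $\mathcal T=(\Delta_C)_{C\in \mathcal F(A)}$ indexed by flags, property~(1) of Definition~\ref{triangulation:by:flags1} selects for each face $\sigma\in A$ a unique vertex $\mathbf v_\sigma\in\sigma^o\cap M_{\mathbb Q}$, yielding a marking $\mathfrak{m}_{\mathcal T}=(\mathbf v_\sigma)_{\sigma\in A}$.

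It then remains to verify that these two assignments are mutually inverse, which is essentially a matter of unwinding definitions. Starting from a marking $\mathfrak{m}=(u_\sigma)$, the triangulation $\mathcal T_{\mathfrak{m}}$ has exactly one vertex in each face, namely $u_\sigma$, so $\mathfrak{m}_{\mathcal T_{\mathfrak{m}}}=\mathfrak{m}$. Starting from a triangulation $\mathcal T=(\Delta_C)$, the marking $\mathfrak{m}_{\mathcal T}=(\mathbf v_\sigma)$ rebuilds each $\Delta_C$ as the convex hull of $\{\mathbf v_\sigma\mid\sigma\in C\}$ by property~(2), so $\mathcal T_{\mathfrak{m}_{\mathcal T}}=\mathcal T$. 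Composing this bijection with $\Psi$ gives the desired bijection between equivalence classes of combinatorial Seshadri stratifications and triangulations of $P$ indexed by flags of faces.

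There is no real obstacle here: all the substantive geometric content has been packaged into Lemma~\ref{marking:bijection} (which depends on the linear independence statement of Lemma~\ref{weights:lin:indep}) and into the lemma showing that $\mathcal T_{\mathfrak{m}}$ is actually a triangulation. The theorem itself is a formal composition of these two bijections.
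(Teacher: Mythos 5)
Your proposal is correct and follows exactly the route the paper takes: compose the bijection of Lemma~\ref{marking:bijection} (equivalence classes of combinatorial Seshadri stratifications $\leftrightarrow$ markings) with the bijection between markings and triangulations indexed by flags that is set up in the paragraph and lemma immediately preceding the theorem. You merely spell out the mutual-inverse check a bit more explicitly than the paper does, which is harmless.
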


\section{A higher rank quasi-valuation}\label{quasi-valuation:sec}
Let $(X_\sigma,f_\sigma)_{\sigma\in A}$, be a combinatorial Seshadri stratification on $X_P\subseteq \mathbb P(V)$
and denote by $\mathcal F_{\max}(A)$  the set of all maximal chains in $A$.
Let $\mathbb Q^A$ be the vector space with the standard basis $\{e_\tau\mid \tau\in A\}$.
Given a maximal chain $\mathfrak C\in \mathcal F_{max}(A)$, we identify the vector space
$\mathbb Q^\mathfrak C$ with the subspace of $\mathbb Q^A$ spanned by the basis elements $e_\tau$, $\tau\in \mathfrak C$.
So we view the valuation $\nu_\mathfrak C$ defined in Definition~\ref{def:valuation} 
as a map $\nu_\mathfrak C:\mathbb K(\hat X_P)\setminus \{0\}\rightarrow \mathbb Q^A$, 
such that the image lies in the subspace $\mathbb Q^\mathfrak C\subseteq  \mathbb Q^A$.
 
We fix on $A$  a linearization ``$>^t$'' of the partial order on $A$, i.e.``$>^t$'' is a total order on $A$ such that 
$\tau>\sigma$ for $\tau,\sigma\in A$ implies $\tau>^t \sigma$. We get on $\mathbb Q^A$ a total order by  
taking the induced lexicographic order, which makes $\mathbb Q^A$ into a totally ordered abelian group.

It is well known that the minimum function applied to a finite family of quasi-valuations is a quasi-valuation
(see, for example, \cite{FL}).
\begin{definition}\label{def:quasivaluation}
The \emph{quasi-valuation $\nu:\mathbb{K}[\hat{X_P}]\setminus\{0\}\to\mathbb{Q}^A$ }
associated to the combinatorial Seshadri stratification $(X_\sigma,f_\sigma)_{\sigma\in A}$ and the fixed 
total order $>^t$ on $A$ is the map  defined by:
$$
g\mapsto\nu(g):=\min\{\nu_{\mathfrak{C}}(g)\mid \mathfrak{C}\in \mathcal F_{\max}(A)\}.
$$
\end{definition}
Since $\nu_{\mathfrak{C}}(g^k)=k\nu_{\mathfrak{C}}(g)$ for all $g\in \mathbb{K}[\hat{X_P}]\setminus\{0\}$, $k\in\mathbb N$,
and all maximal chains in $A$, the quasi-valuation $\nu$ is \textit{homogenous}: $\nu(g^k)=k\nu(g)$ for $k\in\mathbb N$.
\begin{remark}
The quasi-valuation $\nu$ depends on the choice of the linearization ``$>^t$'', so it would
be more apt to write $\nu_{>^t}$ and to add ``$>^t$'' to the objects defined in the following. 
To avoid an excess of indexing we stick to the notation above. In addition,  
many of these objects turn out to be essentially independent of the choice of ``$>^t$''. 
\end{remark}
\subsection{First properties}
Let $\Gamma:=\{\nu(g)\mid g\in\mathbb{K}[\hat{X}_P]\setminus\{0\}\}\subseteq\mathbb{Q}^A$ be the image of the quasi-valuation. 
For $\underline{a}=\sum_{\tau\in A}a_\tau e_\tau\in \mathbb{Q}^A$, denote by $\mathrm{supp}\,\underline{a}:=\{\tau\in A\mid a_\tau\neq 0 \}$
the support of $\underline{a}$. By construction, the support of $\underline{a}\in\Gamma$ is always contained in some maximal chain $\mathfrak C$.
\begin{lemma}\label{quasi-T-invariant}
\begin{enumerate}
\item[{\it a)}] If $g=g_{\eta_1}+\ldots+g_{\eta_q}$ is a decomposition of $g\in\mathbb K[\hat X_P]$ into $\hat T$-eigenvectors,
then $\nu(g)=\min\{ \nu(g_{\eta_j})\mid   j=1,\ldots,q\}$.
\item[{\it b)}] $\Gamma:=\{\nu(g)\mid g\in\mathbb{K}[\hat{X}_P]\setminus\{0\}, g\textit{\ is a $\hat T$-eigenfunction}\}$.
\item[{\it c)}] $\nu(f_\tau)=e_\tau$ for all extremal functions $f_\tau$, $\tau\in A$, of the Seshadri stratification, independent
of the choice of the linearization ``$>^t$''.
\end{enumerate}
\end{lemma}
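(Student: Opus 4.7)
The plan is to handle the three parts in order, with (a) doing most of the work and (b), (c) following as easy consequences.

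For part (a), I would argue directly from the definitions. Fix a decomposition $g=g_{\eta_1}+\ldots+g_{\eta_q}$ into $\hat T$-eigenfunctions of distinct weights. By Definition~\ref{def:valuation}, for every maximal chain $\mathfrak{C}\in\mathcal F_{\max}(A)$ we already have $\nu_{\mathfrak C}(g)=\min_{i}\nu_{\mathfrak C}(g_{\eta_i})$. Taking the minimum over $\mathfrak C$ in Definition~\ref{def:quasivaluation} and swapping the two (finite) minima gives
$$
\nu(g)=\min_{\mathfrak C}\min_{i}\nu_{\mathfrak C}(g_{\eta_i})=\min_{i}\min_{\mathfrak C}\nu_{\mathfrak C}(g_{\eta_i})=\min_{i}\nu(g_{\eta_i}),
$$
which is the claim. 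No further input is needed.

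Part (b) is an immediate corollary of (a): for any $g\in\mathbb K[\hat X_P]\setminus\{0\}$, item (a) exhibits $\nu(g)$ as $\nu(g_{\eta_{i_0}})$ for some index $i_0$, where $g_{\eta_{i_0}}$ is a $\hat T$-eigenfunction. Thus every element of $\Gamma$ is realised on an eigenfunction, and the reverse inclusion is tautological.

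For part (c), fix $\tau\in A$ and a maximal chain $\mathfrak C=\{\sigma_r>\ldots>\sigma_0\}$. If $\tau\in\mathfrak C$, Example~\ref{extremal:valuation} gives $\nu_{\mathfrak C}(f_\tau)=e_\tau$. If $\tau\notin\mathfrak C$, Corollary~\ref{weight:and:degree:formula2} says the support of $\nu_{\mathfrak C}(f_\tau)$ is contained in $\{\sigma_0,\ldots,\sigma_j\}$, where $\sigma_j$ is the smallest element of $\mathfrak C$ strictly above $\tau$, and moreover the coefficient at $\sigma_j$ is strictly positive. I would then compare $\nu_{\mathfrak C}(f_\tau)$ with $e_\tau$ in the lexicographic order on $\mathbb Q^A$ induced by $>^t$: the positions that matter are $\sigma_j$ and $\tau$ (together with possibly smaller $\sigma_i$'s). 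Since $>^t$ linearises the partial order and $\sigma_j>\tau$ in $A$, we have $\sigma_j>^t\tau$, so the $>^t$-largest index where the two vectors differ is $\sigma_j$, and at that position $\nu_{\mathfrak C}(f_\tau)$ is strictly positive while $e_\tau$ is zero. Hence $\nu_{\mathfrak C}(f_\tau)>e_\tau$ for every $\mathfrak C$ not containing $\tau$, and the minimum $\nu(f_\tau)=e_\tau$ is achieved exactly on any chain through $\tau$. Since the value is $e_\tau$, it does not depend on the choice of linearization.

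The main obstacle, mild as it is, lies in the lex comparison in part (c): one must be careful that $>^t$ being a linear extension of the partial order is exactly what forces $\sigma_j>^t\tau$, so that the positive leading entry of $\nu_{\mathfrak C}(f_\tau)$ dominates $e_\tau$ in the induced lex order. Everything else is bookkeeping with the definitions already laid out.
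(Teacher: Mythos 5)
Your proof is correct and follows the same route as the paper: part (a) by swapping the two finite minima, part (b) as an immediate consequence, and part (c) by combining Example~\ref{extremal:valuation} with Corollary~\ref{weight:and:degree:formula2}. The paper leaves the lexicographic comparison $\nu_{\mathfrak C}(f_\tau)>^t e_\tau$ implicit, whereas you spell out why $\sigma_j>^t\tau$ (from the linearization) makes $\sigma_j$ the dominating coordinate — a useful elaboration, not a deviation.
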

\begin{proof}
Let $g=g_{\eta_1}+\ldots+g_{\eta_q}$ be a decomposition of $g\in\mathbb K[\hat X_P]$ into $\hat T$-eigenvectors.
By definition we have 
$$
\nu(g)=\min\big\{ \min\{\nu_{\mathfrak C}(g_{\eta_i})\mid i=1,\ldots,t\}\vert    \mathfrak{C}\in \mathcal F_{\max}(A)\big\}=
\min\{ \nu(g_{\eta_j})\mid   j=1,\ldots,q\},
$$
which proves {\it a)} and {\it b)}. 
By Example~\ref{extremal:valuation} we know $\nu_{\mathfrak C}(f_\tau)=e_\tau$ if $\tau\in \mathfrak C$,
and Corollary~\ref{weight:and:degree:formula2} implies $\nu_{\mathfrak C}(f_\tau)>^t e_\tau$ if  $\tau\not\in \mathfrak C$,
independent of the choice of the linearization.
\end{proof}
\begin{lemma}\label{product:extremal:quasi11}
For a product $\prod_{\sigma\in A} f_\sigma^{m_\sigma}$ of extremal functions we have 
$\nu(\prod_{\sigma\in A} f_\sigma^{m_\sigma})
\ge^t \sum_{\sigma\in A} m_\sigma e_{\sigma}$, where equality holds if and only if there exists a maximal chain
 $\mathfrak C=(\sigma_r,\ldots,\sigma_0)$ such that $\{\sigma\in A\mid m_\sigma>0\}\subseteq \mathfrak C$.
 If $\nu(\prod_{\sigma\in A} f_\sigma^{m_\sigma})= \sum_{\sigma\in A} m_\sigma e_{\sigma}$, then the 
 equality holds independent of the choice of the linearization ``$>^t$''.
\end{lemma}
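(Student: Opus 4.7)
The plan is to reduce everything to two facts already in hand: each $\nu_{\mathfrak C}$ is an honest valuation taking values in $\mathbb Q^{\mathfrak C} \subseteq \mathbb Q^A$, with $\nu_{\mathfrak C}(f_\sigma) = e_\sigma$ for $\sigma \in \mathfrak C$ (Example~\ref{extremal:valuation}); while $\nu$ itself is only quasi-additive, but it satisfies $\nu(f_\sigma) = e_\sigma$ for \emph{every} $\sigma \in A$, independent of $>^t$, by Lemma~\ref{quasi-T-invariant}\,(c). Applying quasi-additivity of $\nu$ iteratively to the product and substituting these values will immediately give
\[
\nu\!\left(\prod_{\sigma \in A} f_\sigma^{m_\sigma}\right) \ge^t \sum_{\sigma \in A} m_\sigma\, \nu(f_\sigma) \;=\; \sum_{\sigma \in A} m_\sigma e_\sigma,
\]
which is the claimed inequality, and which holds in every linearization.

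For the characterization of equality, I would handle both directions by a short support argument. For the ``if'' direction, assume there exists a maximal chain $\mathfrak C$ containing $\{\sigma \in A : m_\sigma > 0\}$; then additivity of $\nu_{\mathfrak C}$ together with $\nu_{\mathfrak C}(f_\sigma) = e_\sigma$ for $\sigma \in \mathfrak C$ yields $\nu_{\mathfrak C}(\prod_\sigma f_\sigma^{m_\sigma}) = \sum_\sigma m_\sigma e_\sigma$ (the terms with $\sigma \notin \mathfrak C$ drop out since $m_\sigma = 0$), and since $\nu \le^t \nu_{\mathfrak C}$ by the definition of $\nu$ as a minimum, combining with the inequality above forces equality. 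For the converse, assume $\nu(\prod_\sigma f_\sigma^{m_\sigma}) = \sum_\sigma m_\sigma e_\sigma$; because $\mathcal F_{\max}(A)$ is finite the minimum is attained, so there is some $\mathfrak C$ with $\nu_{\mathfrak C}(\prod_\sigma f_\sigma^{m_\sigma}) = \sum_\sigma m_\sigma e_\sigma$. The left-hand side lies in $\mathbb Q^{\mathfrak C}$, hence so does the right-hand side, and this forces $m_\sigma = 0$ for every $\sigma \notin \mathfrak C$.

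The linearization-independence in the equality case is then automatic: the identity $\nu_{\mathfrak C}(\prod_\sigma f_\sigma^{m_\sigma}) = \sum_\sigma m_\sigma e_\sigma$ exhibited in the ``if'' direction does not refer to $>^t$, and the lower bound from quasi-additivity holds in every linearization, so taking the minimum over maximal chains yields $\nu(\prod_\sigma f_\sigma^{m_\sigma}) = \sum_\sigma m_\sigma e_\sigma$ regardless of the choice of $>^t$. I anticipate no substantial obstacle; the proof is essentially bookkeeping, and the only conceptual point is the support trick in the converse direction, which trades on the simple fact that $\nu_{\mathfrak C}$ is $\mathbb Q^{\mathfrak C}$-valued and can therefore never equal a vector with a nonzero entry outside $\mathfrak C$.
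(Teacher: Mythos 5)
Your proof is correct, and the converse direction is handled by a genuinely different (and shorter) argument than the paper's. The lower bound via quasi-additivity and $\nu(f_\sigma)=e_\sigma$, and the ``if'' direction via additivity of $\nu_{\mathfrak C}$, coincide with the paper's argument. But for the ``only if'' direction, the paper proceeds by induction on the number of $\sigma \notin \mathfrak C'$ with $m_\sigma > 0$, invoking Corollary~\ref{weight:and:degree:formula2} at each step to show that $\nu_{\mathfrak C'}(\prod_\sigma f_\sigma^{m_\sigma}) >^t \sum_\sigma m_\sigma e_\sigma$ whenever $\{\sigma : m_\sigma > 0\} \not\subseteq \mathfrak C'$; this gives strict inequality chain-by-chain. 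You instead use the structural fact that $\nu_{\mathfrak C}$ takes values in the coordinate subspace $\mathbb Q^{\mathfrak C} \subseteq \mathbb Q^A$: since the minimum is attained at some $\mathfrak C$, equality forces $\sum_\sigma m_\sigma e_\sigma \in \mathbb Q^{\mathfrak C}$, hence $m_\sigma = 0$ for $\sigma \notin \mathfrak C$. This is more direct and avoids the induction and the auxiliary corollary entirely, at the cost of yielding slightly less information (the paper's route actually establishes the strict inequality $\nu_{\mathfrak C'} >^t \sum m_\sigma e_\sigma$ for every bad chain $\mathfrak C'$, not just for one). Both approaches are valid; yours is the leaner proof of the stated equivalence.
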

\begin{proof}
The quasi-additivity and the homogeneity of a quasi-valuation implies immediately
$\nu(\prod_{\sigma\in A} f_\sigma^{m_\sigma})\ge^t \sum_{\sigma\in A} m_\sigma \nu( f_\sigma)
=\sum_{\sigma\in A} m_\sigma e_{\sigma}$. So this is a lower bound for $\nu_{\mathfrak C}(\prod_{\sigma\in A} f_\sigma^{m_\sigma})$
for all maximal chains $\mathfrak C$, and this bound is independent of the choice of the linearization.

If there exists a maximal chain such that  $\{\sigma\in A\mid m_\sigma>0\}\subseteq \mathfrak C$, then the additivity
of a valuation implies: $ \nu_{\mathfrak C}(\prod_{\sigma\in A} f_\sigma^{m_\sigma})= \sum_{\sigma\in \mathfrak C} m_\sigma \nu_{\mathfrak C}( f_\sigma)
=\sum_{\sigma\in \mathfrak C} m_\sigma e_{\sigma}$, and hence we have equality also for the quasi-valuation,
independent of the choice of the linearization.

Now let $\mathfrak C'=(\tau_r,\ldots,\tau_0)$ be a maximal chain such that  
$\{\sigma\in A\mid m_\sigma>0\}\not\subseteq \mathfrak C'$.
We proceed by induction on $\sharp \{\sigma\in A\setminus \mathfrak C'\mid m_\sigma>0\}$.
If the number is equal to zero, there is nothing to prove:  $\nu_{\mathfrak C'}(\prod_{\sigma\in A} f_\sigma^{m_\sigma})
=\sum_{\sigma\in \mathfrak C} m_\sigma e_{\sigma}$.


Suppose now $\sharp \{\sigma\in A\setminus \mathfrak C'\mid m_\sigma>0\}=q\ge 1$ 
and let $\kappa\in A\setminus \mathfrak C'$ be such that $m_\kappa>0$.
We assume by induction $\nu_{\mathfrak C'}(\prod_{\sigma\in A\setminus\{\kappa\}} f_\sigma^{m_\sigma})
\ge^t \sum_{\sigma\in A\setminus\{\kappa\}} m_\sigma e_{\sigma}$.
The additivity of a valuation, induction and Corollary~\ref{weight:and:degree:formula2} implies:
$$
\begin{array}{rcl}
\nu_{\mathfrak C'}(\prod_{\sigma\in A} f_\sigma^{m_\sigma})&=&\nu_{\mathfrak C'}(\prod_{\sigma\in A\setminus\{\kappa\}} f_\sigma^{m_\sigma})
+\nu_{\mathfrak C'}(f_\kappa^{m_\kappa})\\
&\ge^t& \sum_{\sigma\in A\setminus\{\kappa\}} m_\sigma e_{\sigma}+\nu_{\mathfrak C'}(f_\kappa^{m_\kappa})\\
&>^t& \sum_{\sigma\in A} m_\sigma e_{\sigma}.
\end{array}
$$
It follows, independently of the choice of the linearization: if $\{\sigma\in A\mid m_\sigma>0\}\not\subseteq \mathfrak C'$,
then $\nu_{\mathfrak C'}(\prod_{\sigma\in A} f_\sigma^{m_\sigma})>^t  \sum_{\sigma\in A} m_\sigma e_{\sigma}$,
which finishes the proof of the lemma.
\end{proof}

\subsection{The quasi-valuation and  weight combinatorics}
Let $(X_\sigma,f_\sigma)_{\sigma\in A}$ be a combinatorial Seshadri stratification on $X_P\subseteq \mathbb P(V)$.
Denote by $\mathcal T=(\Delta_C)_{C\in\mathcal F(A)}$ the triangulation of $P$ indexed by flags
associated to the equivalence class of $(X_\sigma,f_\sigma)_{\sigma\in A}$ (Theorem~\ref{SeshStrat:triang:marking}).
Given a $\hat T$-eigenfunction $g\in\mathbb K[X]$ of weight $\lambda_g$, the triangulation suggests to us a preferred
class of maximal chains in $A$: the maximal chains $\mathfrak C$ such that $\frac{-\lambda_g}{\deg g}\in  \Delta_{\mathfrak C}$.
Indeed:
\begin{proposition}\label{prop:simplex}
Let $g\in\mathbb K[X_P]\setminus\{0\}$ be a $\hat T$-eigenfunction of weight $\lambda_g$ and let 
$\mathfrak C=(\sigma_r,\ldots,\sigma_0)$ be a maximal chain in $A$.
The following are equivalent:
\begin{itemize}
\item[i)] $\nu(g)=\nu_\mathfrak C(g)$;
\item[ii)]  $\frac{-\lambda_g}{\deg g}\in \Delta_{\mathfrak C}$;
\item[iii)] $\nu_\mathfrak C(g)\in \mathbb Q^\mathfrak C_{\ge 0}$.
\end{itemize}
In particular, $\nu(g)\in\mathbb Q^A_{\ge 0}$, and  $\nu(g)$ is independent of the choice of the linearization ``$>^t$''. 
\end{proposition}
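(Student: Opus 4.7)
The plan is to first establish the equivalence (ii) $\Leftrightarrow$ (iii) by a direct translation between $\nu_{\mathfrak C}(g)$ and affine combinations of the vertices of $\Delta_{\mathfrak C}$. Writing $\nu_{\mathfrak C}(g) = (a_r,\ldots,a_0)$, Corollary~\ref{weight:and:degree:formula1} gives
\[
\deg g = \sum_{\sigma\in\mathfrak C} a_\sigma \deg f_\sigma, \qquad -\lambda_g = \sum_{\sigma\in\mathfrak C} a_\sigma(-\mu_\sigma) = \sum_{\sigma\in\mathfrak C} a_\sigma \deg f_\sigma\cdot\mathbf v_\sigma,
\]
where $\mathbf v_\sigma = -\mu_\sigma/\deg f_\sigma$ is the vertex of $\mathcal T$ in $\sigma^o$. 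Since $\deg g>0$ and $\deg f_\sigma>0$, the point $-\lambda_g/\deg g$ is the barycentric combination of $\{\mathbf v_\sigma\}_{\sigma\in\mathfrak C}$ with (uniquely determined) coefficients $a_\sigma \deg f_\sigma/\deg g$, which are non-negative if and only if all $a_\sigma\ge 0$.

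Next I will show (iii) $\Rightarrow$ (i). Assuming $\nu_{\mathfrak C}(g)\in\mathbb Q^{\mathfrak C}_{\ge 0}$, choose $k>0$ with $k a_\sigma\in\mathbb Z$ for all $\sigma\in\mathfrak C$. By Corollary~\ref{weight:and:degree:formula1} one has $g^k = c\prod_{\sigma\in\mathfrak C} f_\sigma^{ka_\sigma}$ in $\mathbb K[\hat X_P]_{F_{\mathfrak C}}$; because every exponent is a non-negative integer, this equality already holds in $\mathbb K[\hat X_P]$. Applying Lemma~\ref{product:extremal:quasi11} to this product (whose support is entirely contained in $\mathfrak C$), we get $\nu(g^k) = \sum_{\sigma\in\mathfrak C} ka_\sigma e_\sigma = k\,\nu_{\mathfrak C}(g)$, and homogeneity of $\nu$ yields $\nu(g) = \nu_{\mathfrak C}(g)$.

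For (i) $\Rightarrow$ (ii) the key observation is that $-\lambda_g/\deg g\in P$ (since the $\hat T$-weight of any homogeneous $g\in\mathbb K[\hat X_P]_{\deg g}$ lies in the cone over $P$), so by Theorem~\ref{SeshStrat:triang:marking} there exists a maximal chain $\mathfrak C^*$ with $-\lambda_g/\deg g\in\Delta_{\mathfrak C^*}$. The already established implications give $\nu_{\mathfrak C^*}(g)\in\mathbb Q^{\mathfrak C^*}_{\ge 0}$ and $\nu(g)=\nu_{\mathfrak C^*}(g)$. If now (i) holds for $\mathfrak C$, then $\nu_{\mathfrak C}(g)=\nu_{\mathfrak C^*}(g)$; but the left side is supported in $\mathfrak C$ and the right in $\mathfrak C^*$, so their common support lies in $\mathfrak C\cap\mathfrak C^*$ with non-negative entries, giving (iii).

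The final assertions follow at once: every $\nu(g)$ equals some $\nu_{\mathfrak C^*}(g)$ with only non-negative entries, so $\nu(g)\in\mathbb Q^A_{\ge 0}$; and since the equivalent conditions (ii) and (iii) make no reference to the linearization $>^t$, neither does the value $\nu(g)$. The only delicate point in the argument is the implication (i) $\Rightarrow$ (iii); the rest is a straightforward combination of the formulas of Corollary~\ref{weight:and:degree:formula1} with Lemma~\ref{product:extremal:quasi11}.
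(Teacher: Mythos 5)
Your proof is correct and follows essentially the same path as the paper: establish (ii)$\Leftrightarrow$(iii) via the formulas from Corollary~\ref{weight:and:degree:formula1}, pass from non-negativity of $\nu_{\mathfrak C}(g)$ to the identity $g^k=c\prod f_\sigma^{ka_\sigma}$ in $\mathbb K[\hat X_P]$ so that Lemma~\ref{product:extremal:quasi11} computes $\nu(g)$, and then deduce (i)$\Rightarrow$(iii) from the existence of a chain $\mathfrak C^*$ with $-\lambda_g/\deg g\in\Delta_{\mathfrak C^*}$. The paper phrases the middle step as (ii)$\Rightarrow$(i) while you start from (iii), but the argument is the same; and your final step, though labeled (i)$\Rightarrow$(ii), in fact directly proves (i)$\Rightarrow$(iii) and closes the loop via the already established (iii)$\Leftrightarrow$(ii).
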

\begin{proof}
If $\frac{-\lambda_g}{\deg g}\in \Delta_{\mathfrak C}$, then $\lambda_g$ is a non-negative linear
combination of the weights $\mu_{\sigma_i}$, $i=0,\ldots,r$, which implies $\nu_\mathfrak C(g)\in \mathbb Q^\mathfrak C_{\ge 0}$.
If $\nu_\mathfrak C(g)=(a_r,\ldots,a_0)\in \mathbb Q^\mathfrak C_{\ge 0}$, then
by Corollary~\ref{weight:and:degree:formula1}:
$$
\frac{-\lambda_g}{\deg g} = \frac{-1}{\sum_{j=0}^r a_j\deg f_{\sigma_j}}\left(\sum_{i=0}^r a_i\mu_{\sigma_i} \right)=
\sum_{i=0}^r \left(\frac{a_i\deg f_{\sigma_i}}{\sum_{j=0}^r a_j\deg f_{\sigma_j}}\right)\frac{- \mu_{\sigma_i}}{\deg f_{\sigma_i}}
\in \Delta_{\mathfrak C},
$$
which shows the equivalence of \textit{ii)} and \textit{iii)}. 

Given a $\hat T$-eigenfunction $g\in\mathbb K[X_P]\setminus\{0\}$ of weight $\lambda_g$, fix a maximal chain
$\mathfrak C=(\sigma_r,\ldots,\sigma_0)$ such that $\frac{-\lambda_g}{\deg g}\in \Delta_{\mathfrak C}$.
It follows that $\nu_{\mathfrak C}(g)=(a_r,\ldots,a_0)$ consists only of non-negative numbers. So if we fix $k$ as in 
Corollary~\ref{weight:and:degree:formula1}, then the equality $g^k=cf_{\sigma_r}^{ka_r}\cdots f_{\sigma_0}^{ka_0}$
for some $c\in\mathbb K^*$ holds in $\mathbb K[\hat X_P]$, and hence 
$$\nu(g)=\frac{1}{k}\nu(f_{\sigma_r}^{ka_r}\cdots f_{\sigma_0}^{ka_0}) =\sum_{j=0}^ra_j e_{\sigma_j}=\nu_{\mathfrak C}(g)$$
by Lemma~\ref{product:extremal:quasi11}. This proves {\it ii)} implies {\it i)}, and
 $\nu(g)\in\mathbb Q^A_{\ge 0}$  is independent of the choice of ``$>^t$''.
Suppose $\mathfrak C=(\tau_r,\ldots,\tau_0)$ is a maximal chain such that $\nu(g)=\nu_{\mathfrak C}(g)$.
By the above we know $\nu(g)\in \mathbb Q^\mathfrak C_{\ge 0}$, and hence 
$\nu_\mathfrak C(g)\in  \mathbb Q^\mathfrak C_{\ge 0}$  which shows {\it i)} implies {\it iii)}.
\end{proof}
Let $g\in\mathbb K[X_P]\setminus\{0\}$ be a $\hat T$-eigenfunction of weight $\lambda_g$ and let 
$\mathfrak C=(\sigma_r,\ldots,\sigma_0)$ be a maximal chain in $A$ such that
$\frac{-\lambda_g}{\deg g}\in \Delta_{\mathfrak C}$. The value of the quasi-valuation in $g$ is completely determined
by its $\hat T$-weight:
\begin{corollary}\label{coro:explicit:weight:valuation}
If $\frac{-\lambda_g}{\deg g}=\sum_{\sigma\in\mathfrak C} a_\sigma \frac{-\mu_\sigma}{\deg f_\sigma}$
is an expression of $\frac{-\lambda_g}{\deg g}$ as a convex linear combination of the vertices of $\Delta_{\mathfrak C}$,
then $\nu(g)=\sum_{\sigma\in\mathfrak C}  \frac{a_\sigma \deg\, g}{\deg f_\sigma} e_\sigma$
\end{corollary}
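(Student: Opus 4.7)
The plan is to observe that this corollary is essentially a direct consequence of Proposition~\ref{prop:simplex} together with the definition of $\nu_{\mathfrak{C}}$ and the basis property from Lemma~\ref{weights:lin:indep}. First I would invoke Proposition~\ref{prop:simplex}: since $\frac{-\lambda_g}{\deg g}\in\Delta_{\mathfrak{C}}$, the equivalence of (i) and (ii) there gives $\nu(g)=\nu_{\mathfrak{C}}(g)$. So the task reduces to computing $\nu_{\mathfrak{C}}(g)$ explicitly.

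Next I would unwind the convex combination hypothesis to extract an expression for $\lambda_g$. Multiplying $\frac{-\lambda_g}{\deg g}=\sum_{\sigma\in\mathfrak C} a_\sigma \frac{-\mu_\sigma}{\deg f_\sigma}$ through by $-\deg g$ yields
\[
\lambda_g=\sum_{\sigma\in\mathfrak C}\frac{a_\sigma\deg g}{\deg f_\sigma}\,\mu_\sigma.
\]
By Lemma~\ref{weights:lin:indep} the weights $\{\mu_\sigma\mid \sigma\in\mathfrak C\}$ form a $\mathbb{Q}$-basis of $\hat M_{\mathbb{Q}}$, so this is the unique expression of $\lambda_g$ in that basis.

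Finally, by Definition~\ref{def:valuation}, $\nu_{\mathfrak{C}}(g)$ is precisely the tuple of coefficients in this unique expansion, i.e.\ $\nu_{\mathfrak{C}}(g)=\sum_{\sigma\in\mathfrak C}\frac{a_\sigma\deg g}{\deg f_\sigma}e_\sigma$. Combining with the first step gives the stated formula for $\nu(g)$. There is essentially no obstacle here: the only thing to check is that the two routes from the convex-combination data back to coordinates in the basis $\{\mu_\sigma\}$ agree, which is immediate from uniqueness. (One may optionally cross-check consistency with Corollary~\ref{weight:and:degree:formula1}: summing the coefficients weighted by $\deg f_\sigma$ recovers $\deg g$ because the $a_\sigma$ sum to $1$.)
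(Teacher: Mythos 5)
Your proof is correct and follows exactly the route the paper intends: Proposition~\ref{prop:simplex} reduces the claim to computing $\nu_{\mathfrak C}(g)$, and the coefficients in the unique expansion of $\lambda_g$ in the basis $\{\mu_\sigma\}_{\sigma\in\mathfrak C}$ (Lemma~\ref{weights:lin:indep}, Definition~\ref{def:valuation}) are read off directly from the rescaled convex combination. The paper states the corollary without proof precisely because this chain of observations is immediate, and your consistency check against Corollary~\ref{weight:and:degree:formula1} is a nice sanity confirmation.
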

One can go the other way round too: the quasi-valuation determines the weight and the degree. Let 
$g\in\mathbb K[\hat{X}_P]\setminus\{0\}$ be a $\hat T$-eigenfunction of weight $\lambda_g$. Since $\nu(g)=\nu_{\mathfrak C}(g)$
for some maximal chain, we can apply Corollary~\ref{weight:and:degree:formula1} and get:

\begin{corollary}\label{coro:explicit:quasi:valuation:gives:weight}
If $\nu(g)=(a_\sigma)_{\sigma\in A}$, 
then $\deg g=\sum_{\sigma\in A} a_\sigma\deg f_{\sigma}$ and $\lambda_g=\sum_{\sigma\in A} 
a_\sigma\mu_\sigma$, and there exists a positive integer $k>0$ and $c\in\mathbb K^*$ such that 
$g^k=c\prod_{\sigma\in A} f^{ka_\sigma}_{\sigma}$.
\end{corollary}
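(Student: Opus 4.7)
The plan is to harvest this directly from the two preceding results, Proposition~\ref{prop:simplex} and Corollary~\ref{weight:and:degree:formula1}, with no new machinery. First, by the very definition of $\nu$ as a minimum over maximal chains, I would pick a maximal chain $\mathfrak C = (\sigma_r,\ldots,\sigma_0)$ in $A$ such that $\nu(g) = \nu_\mathfrak C(g)$. Because $\nu_\mathfrak C$ takes values in the coordinate subspace $\mathbb Q^\mathfrak C \subseteq \mathbb Q^A$, I immediately conclude that $a_\sigma = 0$ for all $\sigma \in A \setminus \mathfrak C$. Hence the two sums in the statement collapse to sums over $\mathfrak C$, and only the entries indexed by $\sigma \in \mathfrak C$ need to be analysed.

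Next, I would apply Corollary~\ref{weight:and:degree:formula1} to $g$ with respect to the chain $\mathfrak C$, since $g$ is a $\hat T$-eigenfunction and $\nu_\mathfrak C(g) = (a_\sigma)_{\sigma \in \mathfrak C}$. This directly yields the identities
\[
\deg g = \sum_{\sigma \in \mathfrak C} a_\sigma \deg f_\sigma, \qquad \lambda_g = \sum_{\sigma \in \mathfrak C} a_\sigma \mu_\sigma,
\]
and produces an integer $k > 0$ and a scalar $c \in \mathbb K^*$ with $g^k = c \prod_{\sigma \in \mathfrak C} f_\sigma^{ka_\sigma}$ in the localisation $\mathbb K[\hat X_P]_{F_\mathfrak C}$. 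Since $a_\sigma = 0$ for $\sigma \notin \mathfrak C$, these identities can be rewritten as sums and products over all of $A$, exactly as the statement demands.

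The only delicate point, and the one I expect to be the main (and small) obstacle, is to promote the displayed identity $g^k = c \prod_{\sigma} f_\sigma^{ka_\sigma}$ from the localisation back to the integral domain $\mathbb K[\hat X_P]$. For this I would invoke Proposition~\ref{prop:simplex}: since $\nu(g) = \nu_\mathfrak C(g)$, the vector $\nu_\mathfrak C(g)$ lies in the non-negative orthant $\mathbb Q^\mathfrak C_{\ge 0}$, so the exponents $k a_\sigma$ are non-negative integers and the right-hand side is a \emph{regular} function on $\hat X_P$. Because $\hat X_P$ is irreducible, $F_\mathfrak C$ is not a zero-divisor and the localisation map $\mathbb K[\hat X_P] \hookrightarrow \mathbb K[\hat X_P]_{F_\mathfrak C}$ is injective, so the equality persists in $\mathbb K[\hat X_P]$ itself. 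This completes the proof.
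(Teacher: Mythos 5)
Your proposal is correct and follows the paper's own (very terse) line of reasoning: pick a maximal chain $\mathfrak C$ with $\nu(g)=\nu_\mathfrak C(g)$ and feed it into Corollary~\ref{weight:and:degree:formula1}. The one place where you add something the paper leaves implicit is the promotion of the identity $g^k = c\prod_\sigma f_\sigma^{ka_\sigma}$ from the localisation $\mathbb K[\hat X_P]_{F_\mathfrak C}$ to $\mathbb K[\hat X_P]$ itself; your use of Proposition~\ref{prop:simplex} to guarantee non-negative exponents, combined with injectivity of the localisation map on the integral domain $\mathbb K[\hat X_P]$, is exactly the right way to close that small gap.
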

\subsection{Two fans of monoids}\label{2:fans:monoids}
The explicit description of the quasi-valuation in Corollary~\ref{coro:explicit:weight:valuation} in terms of weight combinatorics
and triangulations makes it possible to give a similar description of the image of the quasi-valuation
$\Gamma:=\{\nu(g)\mid g\in\mathbb{K}[\hat{X_P}]\setminus\{0\}\}\subseteq\mathbb{Q}^A$.
A quasi-valuation is only quasi-additive: $\nu(gh)\ge^t \nu(g)+\nu(h)$, so, in general, $\Gamma$ is not a monoid.
\begin{lemma}\label{monoid:structure}
If $g,h\in \mathbb K[\hat X_P]\setminus\{0\}$ are $\hat T$-eigenfunctions of weight $\lambda_g$ respectively $\lambda_h$,
then $\nu(gh)=\nu(g)+\nu(h)$ if and only if there exists a maximal chain $\mathfrak C$ such that 
 $\frac{-\lambda_g}{\deg g}, \frac{-\lambda_h}{\deg h}\in\Delta_\mathfrak C$.
\end{lemma}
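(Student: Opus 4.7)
The plan is to leverage Proposition~\ref{prop:simplex} together with Corollary~\ref{coro:explicit:quasi:valuation:gives:weight} to translate between the ``valuation in $\mathbb Q^A$'' picture and the ``weight in the simplex $\Delta_{\mathfrak C}$'' picture, reducing the lemma to straightforward observations about supports and convex combinations.

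For the backward direction I would fix a maximal chain $\mathfrak C$ with $\frac{-\lambda_g}{\deg g},\frac{-\lambda_h}{\deg h}\in\Delta_{\mathfrak C}$. By Proposition~\ref{prop:simplex} this gives $\nu(g)=\nu_{\mathfrak C}(g)$ and $\nu(h)=\nu_{\mathfrak C}(h)$. Since $\Delta_{\mathfrak C}$ is convex, the point
\[
\frac{-(\lambda_g+\lambda_h)}{\deg g+\deg h}=\frac{\deg g}{\deg g+\deg h}\cdot\frac{-\lambda_g}{\deg g}+\frac{\deg h}{\deg g+\deg h}\cdot\frac{-\lambda_h}{\deg h}
\]
also lies in $\Delta_{\mathfrak C}$. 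Since $gh$ is a $\hat T$-eigenfunction of weight $\lambda_g+\lambda_h$ and degree $\deg g+\deg h$, Proposition~\ref{prop:simplex} applied to $gh$ gives $\nu(gh)=\nu_{\mathfrak C}(gh)$, and additivity of the valuation $\nu_{\mathfrak C}$ yields $\nu(gh)=\nu_{\mathfrak C}(g)+\nu_{\mathfrak C}(h)=\nu(g)+\nu(h)$.

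For the forward direction, suppose $\nu(gh)=\nu(g)+\nu(h)$, and pick a maximal chain $\mathfrak C$ realising the minimum for $gh$, i.e. $\nu(gh)=\nu_{\mathfrak C}(gh)$. By Proposition~\ref{prop:simplex} the vector $\nu(gh)$ has support contained in $\mathfrak C$. The key observation is that by Proposition~\ref{prop:simplex} both $\nu(g)$ and $\nu(h)$ have all non-negative entries in $\mathbb Q^A$; since their sum has support inside $\mathfrak C$, each of the two summands must already have support inside $\mathfrak C$. Writing $\nu(g)=(a_\sigma)_{\sigma\in A}$ with $a_\sigma=0$ for $\sigma\notin\mathfrak C$ and using Corollary~\ref{coro:explicit:quasi:valuation:gives:weight}, we get $\deg g=\sum_{\sigma\in\mathfrak C}a_\sigma\deg f_\sigma$ and $\lambda_g=\sum_{\sigma\in\mathfrak C}a_\sigma\mu_\sigma$, so
\[
\frac{-\lambda_g}{\deg g}=\sum_{\sigma\in\mathfrak C}\frac{a_\sigma\deg f_\sigma}{\deg g}\cdot\frac{-\mu_\sigma}{\deg f_\sigma}
\]
is a convex combination of the vertices of $\Delta_{\mathfrak C}$, hence lies in $\Delta_{\mathfrak C}$. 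The same reasoning applied to $h$ finishes the direction.

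The only mildly subtle step is the support-transfer argument: to conclude $\operatorname{supp}\nu(g)\subseteq\mathfrak C$ from $\operatorname{supp}(\nu(g)+\nu(h))\subseteq\mathfrak C$, one uses coordinate-wise non-negativity granted by Proposition~\ref{prop:simplex}. This is the pivot where the equivalences (i)--(iii) of that proposition do all the work; everything else is an application of additivity of $\nu_{\mathfrak C}$ or of the explicit weight/degree formulae of Corollary~\ref{coro:explicit:quasi:valuation:gives:weight}.
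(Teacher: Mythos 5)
Your proof is correct, and it takes a somewhat different route than the paper's. The paper's proof starts by invoking the homogeneity of $\nu$ to replace $g$ and $h$ by $k$-th powers, which by Corollary~\ref{coro:explicit:quasi:valuation:gives:weight} lets it assume $g$ and $h$ are (up to scalar) products of extremal functions; it then translates the simplex condition into a support condition via Proposition~\ref{prop:simplex} and closes by citing Lemma~\ref{product:extremal:quasi11}, which was established specifically for such products. You avoid that detour entirely and argue directly in the valuation picture: for the ``if'' direction you use only convexity of $\Delta_{\mathfrak C}$, Proposition~\ref{prop:simplex}, and additivity of the single-chain valuation $\nu_{\mathfrak C}$; for the ``only if'' direction, the decisive step is your coordinate-wise non-negativity argument (if $\nu(g)+\nu(h)$ is supported in $\mathfrak C$ and both summands lie in $\mathbb Q^A_{\ge 0}$, then each summand is supported in $\mathfrak C$), followed by the explicit formula of Corollary~\ref{coro:explicit:quasi:valuation:gives:weight}, which expresses $-\lambda_g/\deg g$ as a convex combination of vertices. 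This bypass of Lemma~\ref{product:extremal:quasi11} and the reduction to extremal products makes your argument slightly more self-contained and elementary; what the paper's version buys is a cleaner structural statement (everything reduces to the purely combinatorial Lemma~\ref{product:extremal:quasi11}, which is then reused elsewhere), while yours is a pointwise check that surfaces the positivity mechanism more explicitly.
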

\begin{proof}
The quasi-valuation is homogeneous, so we can replace $g$ and $h$ by a $k$-th power,
where $k$ is chosen as in Corollary~\ref{coro:explicit:quasi:valuation:gives:weight}, i.e.
we can replace without loss of generality $g$ and $h$ by   products of extremal functions, say 
$g=c_1\prod_{\sigma\in \textrm{supp\,}\nu(g)} f_\sigma^{m_\sigma}$, 
$h=c_2\prod_{\tau\in \textrm{supp\,}\nu(h)} f_\tau^{n_\tau}$. Here $c_1,c_2\in \mathbb K^*$.

Now the condition $\frac{-\lambda_h}{\deg h}, \frac{-\lambda_h}{\deg h}\in\Delta_\mathfrak C$ 
is by Proposition~\ref{prop:simplex} equivalent to the existence of a maximal chain $\mathfrak C$ such that
$\textrm{supp\,}\nu(g),\textrm{supp\,}\nu(h)\subseteq \mathfrak C$,
so we can apply Lemma~\ref{product:extremal:quasi11} to the product of extremal functions, which finishes the proof.
\end{proof}
Let $g,h\in \mathbb{K}[\hat{X_P}]\setminus\{0\}$. After rewriting both as a sum of $\hat T$-eigenvectors,
one concludes by Lemma~\ref{monoid:structure}:
\begin{corollary}\label{coro:support:additive}
 $\nu(gh)=\nu(g)+\nu(h)$ if and only if there exists a maximal chain $\mathfrak C$ such that 
 $\textrm{supp\,}\nu(g),\textrm{supp\,}\nu(h)\subseteq \mathfrak C$.
\end{corollary}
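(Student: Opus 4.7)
The plan is to leverage the fact that each $\nu_{\mathfrak C}$ is an honest valuation (hence additive) in order to transfer Lemma~\ref{monoid:structure} from the $\hat T$-eigenvector setting to arbitrary functions. The key preliminary observation I would establish first is: for any maximal chain $\mathfrak C\in\mathcal F_{\max}(A)$ and any $g\in\mathbb K[\hat X_P]\setminus\{0\}$, if $\mathrm{supp}\,\nu(g)\subseteq\mathfrak C$, then $\nu_{\mathfrak C}(g)=\nu(g)$. To see this, I would decompose $g=g_{\eta_0}+\cdots+g_{\eta_t}$ into $\hat T$-eigenvectors of pairwise distinct weights and, after reordering, assume $\nu(g)=\nu(g_{\eta_0})$ by Lemma~\ref{quasi-T-invariant}(a). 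Then $\mathrm{supp}\,\nu(g_{\eta_0})=\mathrm{supp}\,\nu(g)\subseteq\mathfrak C$, so Proposition~\ref{prop:simplex} gives $\nu(g_{\eta_0})=\nu_{\mathfrak C}(g_{\eta_0})$. For $i\ne 0$, the chain $\nu_{\mathfrak C}(g_{\eta_i})\ge^t\nu(g_{\eta_i})\ge^t\nu(g)=\nu_{\mathfrak C}(g_{\eta_0})$ shows that the minimum defining $\nu_{\mathfrak C}(g)$ is attained at $i=0$, hence $\nu_{\mathfrak C}(g)=\nu(g)$.

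For the ``if'' direction, assume $\mathrm{supp}\,\nu(g),\mathrm{supp}\,\nu(h)\subseteq\mathfrak C$. The preliminary observation yields $\nu_{\mathfrak C}(g)=\nu(g)$ and $\nu_{\mathfrak C}(h)=\nu(h)$, so by additivity of the valuation $\nu_{\mathfrak C}$,
\[\nu_{\mathfrak C}(gh)=\nu_{\mathfrak C}(g)+\nu_{\mathfrak C}(h)=\nu(g)+\nu(h).\]
Combining this with $\nu(gh)\le^t\nu_{\mathfrak C}(gh)$ (definition of $\nu$ as the minimum) and the quasi-additivity $\nu(gh)\ge^t\nu(g)+\nu(h)$ forces $\nu(gh)=\nu(g)+\nu(h)$. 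Equivalently, this last step can be packaged by applying Lemma~\ref{monoid:structure} directly to the minimizing pair of eigenvectors $g_{\eta_0}$ and $h_{\omega_0}$, whose weights both lie in $\Delta_{\mathfrak C}$ by Proposition~\ref{prop:simplex}.

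For the ``only if'' direction, assuming $\nu(gh)=\nu(g)+\nu(h)$, I would pick a maximal chain $\mathfrak C$ realizing $\nu(gh)=\nu_{\mathfrak C}(gh)$ and chase inequalities:
\[\nu_{\mathfrak C}(gh)=\nu_{\mathfrak C}(g)+\nu_{\mathfrak C}(h)\ge^t\nu(g)+\nu(h)=\nu(gh)=\nu_{\mathfrak C}(gh),\]
which forces $\nu_{\mathfrak C}(g)=\nu(g)$ and $\nu_{\mathfrak C}(h)=\nu(h)$. Since $\nu_{\mathfrak C}$ takes values in $\mathbb Q^{\mathfrak C}$, both supports lie in $\mathfrak C$.

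The main obstacle is the preliminary observation: one must rule out the possibility that some eigenvector $g_{\eta_i}$ with $i\ne 0$ contributes a strictly smaller value to $\nu_{\mathfrak C}(g)$ than $\nu_{\mathfrak C}(g_{\eta_0})$. This is precisely where the assumption $\mathrm{supp}\,\nu(g)\subseteq\mathfrak C$ is used in tandem with Proposition~\ref{prop:simplex}, to convert the support condition into the strong equality $\nu(g_{\eta_0})=\nu_{\mathfrak C}(g_{\eta_0})$, which then dominates the other terms via the global minimality of $\nu(g_{\eta_0})$.
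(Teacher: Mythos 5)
Your proof is correct and, at bottom, runs along the same lines as the paper's (which simply says ``rewrite as a sum of $\hat T$-eigenvectors and apply Lemma~\ref{monoid:structure}''), but you make the reduction considerably more explicit and robust. The key addition is your preliminary observation that $\mathrm{supp}\,\nu(g)\subseteq\mathfrak C$ forces $\nu_{\mathfrak C}(g)=\nu(g)$ for \emph{arbitrary} $g$, not just eigenfunctions; once you have this, both directions fall out of quasi-additivity of $\nu$ and the honest additivity of $\nu_{\mathfrak C}$ by pure inequality-chasing, which cleanly sidesteps the nuisance that when you expand $gh$ into eigenvector components, terms of equal $\hat T$-weight can cancel. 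That point is glossed over in the paper's one-line proof. One small gap you should patch: in the preliminary observation, the step from $\mathrm{supp}\,\nu(g_{\eta_0})\subseteq\mathfrak C$ to $\nu(g_{\eta_0})=\nu_{\mathfrak C}(g_{\eta_0})$ is not literally one of the three equivalent conditions in Proposition~\ref{prop:simplex}; you need the short bridge via Corollary~\ref{coro:explicit:quasi:valuation:gives:weight} (or Corollary~\ref{coro:explicit:weight:valuation}): since $\nu(g_{\eta_0})\geq 0$ with support in $\mathfrak C$, the rescaled weight $\tfrac{-\eta_0}{\deg g_{\eta_0}}$ is a convex combination of the marked points $\tfrac{-\mu_\sigma}{\deg f_\sigma}$ with $\sigma\in\mathfrak C$, hence lies in $\Delta_{\mathfrak C}$, and then Proposition~\ref{prop:simplex}(ii)$\Rightarrow$(i) applies. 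With that spelled out, the argument is complete.
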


Recall that $\mathcal T=(\Delta_C)_{C\in\mathcal F(A)}$ denotes the triangulation of $P$ indexed by flags
associated to the equivalence class of $(X_\sigma,f_\sigma)_{\sigma\in A}$.
For a chain $C\in \mathcal F(A)$ let $K(\Delta_C)$ be the cone over $\Delta_C$ and set $S_C=S\cap K(\Delta_C)$.
The union of the cones $K(\Delta_C)$, $C\in \mathcal F(A)$, (together with the origin $\{0\}$ as cone over $\Delta_C$ for
$C$ the empty chain) form a fan. In the same way the union of the $S_C$, $C\in \mathcal F(A)$, forms a fan of monoids,
i.e. for all $C,C',C''\in \mathcal F(A)$ one has: 
$S_C\subseteq S_{C'}$ if and only if $C\subseteq C'$, and $S_C\cap S_{C'}=S_{C''}$,
where $C''=C\cap C'$.
We write $S_\mathcal T$ for this fan of monoids. As a set one has $S_{\mathcal T}=S$, 
but as operation $\hat +$ we have:  $\lambda\hat+\eta = \lambda+\eta$ if there 
exists a chain such that $\lambda,\eta\in S_C$, and $\lambda\hat+\eta$ is not defined otherwise.

The quasi-valuation provides a similar construction in $\mathbb Q^A$. 
For a chain $C\in \mathcal F(A)$ we replace the cone $K(\Delta_C)\subseteq \hat M_\mathbb Q$ by
the cone $K_C\subseteq \mathbb R^A$ spanned by the basis vectors $\{e_\sigma\mid \sigma\in C\}$. 
The collection of cones $\{K_C\mid  C \in \mathcal F(A)\}$ defines a fan in $\mathbb R^A$. 
Its maximal cones are the cones $K_\mathfrak C$ associated
to the maximal chains $\mathfrak C$ in $A$.
For a chain $C\in \mathcal F(A)$ denote by $\Gamma_{C}$ the subset 
$\Gamma_{C}=\{\underline{a}\in\Gamma\mid \mathrm{supp}\,\underline{a}\subseteq C\,\}\subseteq K_C$.
By Lemma~\ref{monoid:structure}, $\Gamma_{C}$ has a natural structure as a monoid, which makes
$\Gamma=\bigcup_{ C\in \mathcal F(A)} \Gamma_{C}$ into a fan of monoids.

\begin{theorem}\label{theorem:finite:generation}
\begin{itemize}
\item[{\it i)}] For all $C\in \mathcal F(A)$, $\Gamma_{C}$ is a finitely generated monoid. 
\item[{\it ii)}] The image of the quasi-valuation
$\Gamma:=\{\nu(g)\mid g\in\mathbb{K}[\hat{X_P}]\setminus\{0\}\}$ is a fan of finitely generated monoids.
\item[{\it iii)}] $\Gamma$ is, as a fan of monoids, isomorphic to $S_{\mathcal T}$.
$\Gamma$ is independent of the choice of ``$>^t$'', and equivalent Seshadri stratifications yield isomorphic 
fans of monoides.
\end{itemize}
\end{theorem}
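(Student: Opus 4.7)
The plan is to identify, for every chain $C\in\mathcal F(A)$, the piece $\Gamma_C$ of the image with the monoid $S_C=S\cap K(\Delta_C)$, and then glue these local identifications into a global isomorphism of fans using the simplicial structure of $\mathcal T$. Finite generation will follow from Gordan's lemma, and the independence statements from results already established in the excerpt.

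First, fix a maximal chain $\mathfrak C$ and compare $\Gamma_\mathfrak C$ with the positive part $\mathbb V_\mathfrak C(\hat X_P)^+$ of the valuation monoid of $\nu_\mathfrak C$. Combining Proposition~\ref{prop:simplex} with the observation $\Gamma\subseteq\mathbb Q^A_{\ge 0}$ (consequence of Lemma~\ref{quasi-T-invariant}(a) and the non-negativity clause of Proposition~\ref{prop:simplex}), one obtains an equality of sets $\Gamma_\mathfrak C=\mathbb V_\mathfrak C(\hat X_P)^+$: an element $\nu(g)$ with support in $\mathfrak C$ lies in $\mathbb Q^\mathfrak C_{\ge 0}$ and hence agrees with $\nu_\mathfrak C(g)$ by uniqueness of expansion in the basis $\mathbb B_\mathfrak C$; conversely, if $\nu_\mathfrak C(g)\in\mathbb Q^\mathfrak C_{\ge 0}$ then $\nu(g)=\nu_\mathfrak C(g)$ by the same proposition. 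The monoid isomorphism $\psi_\mathfrak C$ of Corollary~\ref{weight:versus:valuation} restricted to the positive part then yields a bijection $\Gamma_\mathfrak C\to S\cap K(\Delta_\mathfrak C)=S_\mathfrak C$; Corollary~\ref{coro:support:additive} together with additivity of degrees and weights ensures that the monoid operation on $\Gamma_\mathfrak C$ induced by the fan structure of $\Gamma$ is transported to the natural one on $S_\mathfrak C$. Gordan's lemma applied to the rational polyhedral cone $K(\Delta_\mathfrak C)$ delivers finite generation of $S_\mathfrak C$, and hence of $\Gamma_\mathfrak C$, proving (i) for maximal chains; the case of an arbitrary chain $C$ is identical, with $K(\Delta_C)$ in place of $K(\Delta_\mathfrak C)$.

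Next, I would glue. Since $\mathcal T=(\Delta_C)_{C\in\mathcal F(A)}$ is a simplicial complex whose simplices are indexed by flags, one has $\Delta_\mathfrak C\cap\Delta_{\mathfrak C'}=\Delta_{\mathfrak C\cap\mathfrak C'}$ for any two chains, and consequently $K(\Delta_\mathfrak C)\cap K(\Delta_{\mathfrak C'})=K(\Delta_{\mathfrak C\cap\mathfrak C'})$ and $S_\mathfrak C\cap S_{\mathfrak C'}=S_{\mathfrak C\cap\mathfrak C'}$. On the $\Gamma$-side, $\underline a\in\Gamma_\mathfrak C\cap\Gamma_{\mathfrak C'}$ directly forces $\mathrm{supp}\,\underline a\subseteq\mathfrak C\cap\mathfrak C'$. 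The local isomorphisms $\psi_\mathfrak C$ agree on overlaps, since in both cases $\psi_\mathfrak C(\nu(g))$ is the element of $S$ canonically associated to the degree and $\hat T$-weight of $g$ by Corollary~\ref{coro:explicit:quasi:valuation:gives:weight}, independent of the chosen chain. They therefore assemble into a global isomorphism of fans of monoids $\psi\colon\Gamma\xrightarrow{\ \sim\ }S_\mathcal T$, proving (ii) and (iii).

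Finally, the independence assertions come essentially for free. Proposition~\ref{prop:simplex} already shows that $\nu(g)$ is independent of the linearization ``$>^t$'' for every $\hat T$-eigenfunction $g$, and by Lemma~\ref{quasi-T-invariant}(b) these values exhaust $\Gamma$, so $\Gamma$ does not depend on ``$>^t$''. Equivalent combinatorial Seshadri stratifications correspond via Theorem~\ref{SeshStrat:triang:marking} to the same triangulation $\mathcal T$, hence to the same $S_\mathcal T$, which via $\psi$ transfers to an isomorphism between the corresponding $\Gamma$'s. The main obstacle I anticipate is the simpliciality identity $\Delta_\mathfrak C\cap\Delta_{\mathfrak C'}=\Delta_{\mathfrak C\cap\mathfrak C'}$: although morally a standard property of the triangulation constructed in Section~\ref{Sec:Triangulations:and:Flags}, it must be pinned down carefully since it is precisely what distinguishes a fan of monoids from a mere collection of monoids, and the gluing step in the proof of (iii) crucially depends on it.
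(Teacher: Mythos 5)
Your proposal is correct and is, at its core, the same proof as the paper's: the bijection between $\Gamma$ and $S$ comes from reading off the degree and $\hat T$-weight of a function from its quasi-valuation (Corollaries~\ref{coro:explicit:weight:valuation} and~\ref{coro:explicit:quasi:valuation:gives:weight}), the monoid/fan structure is transported via Lemma~\ref{monoid:structure} and Corollary~\ref{coro:support:additive}, and finite generation of each $\Gamma_C$ comes from Gordan's lemma applied to $S_C=K(\Delta_C)\cap S$.

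The one organizational difference is worth noting. The paper writes down a single global map $\Gamma\to S$, $\underline a\mapsto\sum_\sigma a_\sigma\mu_\sigma$, and verifies in one stroke that it is a bijection and a morphism of fans of monoids; the fan property of $S_{\mathcal T}$ (in particular $S_C\cap S_{C'}=S_{C\cap C'}$) is stated as a standing fact in Section~\ref{2:fans:monoids} and taken as given. You instead build local bijections $\Gamma_{\mathfrak C}\to S_{\mathfrak C}$ chain by chain, passing through $\mathbb V_{\mathfrak C}(\hat X_P)^+$ and the map $\psi_{\mathfrak C}$ of Corollary~\ref{weight:versus:valuation}, and then explicitly glue them on overlaps, flagging the identity $\Delta_{\mathfrak C}\cap\Delta_{\mathfrak C'}=\Delta_{\mathfrak C\cap\mathfrak C'}$ as the crux. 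This is indeed the place where the simpliciality of $\mathcal T$ enters, and it follows because the vertices $\mathbf v_\sigma$ are in bijection with the faces $\sigma\in A$ (Definition~\ref{triangulation:by:flags1}(1)), so a vertex common to $\Delta_{\mathfrak C}$ and $\Delta_{\mathfrak C'}$ must be indexed by some $\sigma\in\mathfrak C\cap\mathfrak C'$, and the common face is then $\Delta_{\mathfrak C\cap\mathfrak C'}$. Your version is somewhat longer, but the extra explicitness about gluing and about compatibility of the local isomorphisms is a reasonable way to make the fan-isomorphism transparent rather than leaving it to the definition of $S_{\mathcal T}$.
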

\begin{proof}
The natural map $\Gamma\rightarrow  S$, which sends a tuple
$ \underline{a}\in\Gamma$ to the weight $\sum_{\sigma\in A} a_\sigma  \mu_\sigma$,
is by Corollary~\ref{coro:explicit:weight:valuation} and Corollary~\ref{coro:explicit:quasi:valuation:gives:weight}
a bijection. By Lemma~\ref{monoid:structure}, it is a morphism between the fans of monoids $\Gamma$ and $S_{\mathcal T}$. 
Since $S_{\mathcal T}$ depends only on the triangulation $\mathcal T$, this proves \textit{iii)} by 
Theorem~\ref{SeshStrat:triang:marking}.

Since $K(\Delta_C)$ is a rational polyhedral cone,
the monoid $S_C=K(\Delta_C)\cap \hat M=K(\Delta_C)\cap S$ is finitely generated 
(by Gordan's Lemma). The isomorphism above sends $\Gamma_C$ onto  $S_C$, which implies
that $\Gamma_C$ is finitely generated, which proves \textit{i)} and \textit{ii)}.
\end{proof}
\subsection{The associated graded algebra and the fan algebra}\label{sec:fan:algebra}
\begin{definition}\label{Defn:FanAlgebra}
The \emph{fan algebra} $\mathbb K[\Gamma]$ associated to the fan of monoids $\Gamma$ is defined as
$\mathbb K[\Gamma]:=\mathbb K[y_{\underline{a}}\mid \underline{a}\in \Gamma] / I(\Gamma)$,
where $I(\Gamma)$ is the ideal generated by the following elements:
\begin{itemize}
\item[{\it a)}] $y_{\underline{a}}\cdot y_{\underline{b}}-y_{\underline{a}+\underline{b}}$ if there exists a chain $C\subset A$ such that 
$\underline{a},\underline{b}\in K_C\subseteq \mathbb Q^A$,
\item[{\it b)}] $y_{\underline{a}}\cdot y_{\underline{b}}$ if there exists no such a chain.
\end{itemize}
\end{definition}

To simplify the notation, we will write $y_{\underline{a}}$ also for its class in $\mathbb{K}[\Gamma]$.
For a  chain $C$ let $\mathbb K[\Gamma_{C}]$ be the subalgebra:
$\mathbb K[\Gamma_{C}]:=\bigoplus_{\underline{a}\in\Gamma_{C}} \mathbb Ky_{\underline{a}}\subseteq \mathbb K[\Gamma]$.
The algebra $\mathbb K[\Gamma_{C}]$ is naturally isomorphic to the usual semigroup algebra associated to the monoid $\Gamma_{C}$.

We endow the algebra $\mathbb{K}[\Gamma]$ with a $\mathbb N$-grading inspired by Corollary~\ref{coro:explicit:quasi:valuation:gives:weight}: 
for $\underline{a}\in\mathbb Q^A$, the degree of $y_{\underline{a}}$ is defined by:
$\deg y_{\underline{a}}=\sum_{\sigma\in A} a_\sigma\deg f_\sigma$.

The quasi-valuation defines a filtration on $\mathbb K[\hat X_P]$ given by ideals. We set for $\underline{a}\in \Gamma$:
$$
\mathbb K[\hat X_P]_{\ge^t \underline{a}}=\{ g\in\mathbb K[\hat X_P]\mid  \nu(g)\ge^t \underline{a}\}, \quad 
\mathbb K[\hat X_P]_{>^t \underline{a}}=\{ g\in\mathbb K[\hat X_P]\mid  \nu(g) >^t \underline{a}\}.
$$
Denote by $\mathrm{gr}_{\nu} \mathbb K[\hat X_P]=\bigoplus_{\underline{a}\in \Gamma} 
\mathbb K[\hat X_P]_{\ge^t \underline{a}}/\mathbb K[\hat X_P]_{>^t \underline{a}}$ the associated graded algebra.
\begin{theorem}\label{fanAndDegeneratetheorem} 
The associated graded algebra $\mathrm{gr}_{\nu} \mathbb K[\hat X_P]$ is isomorphic to the fan algebra $\mathbb K[\Gamma]$.
In particular, it is independent of the choice of the linearization and it depends, up to isomorphism, only on the triangulation 
$\mathcal T$
associated to the equivalence class of the Seshadri stratification $(X_\sigma, f_\sigma)_{\sigma\in A}$.

The variety $X_0=\mathrm{Proj\,} (\mathbb K[\Gamma])$ is 
reduced, it is the irredundant union of the toric varieties $X_{\mathfrak C}=\mathrm{Proj\,} (\mathbb K[\Gamma_\mathfrak C])$,
where $\mathfrak C$ is running over the set of all maximal chains in $A$. The variety is equidimensional,
all irreducible components of $X_0$ have same dimension as $X_P$.
\end{theorem}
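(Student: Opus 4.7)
The plan is to construct an explicit isomorphism $\mathbb K[\Gamma]\xrightarrow{\sim}\mathrm{gr}_\nu\mathbb K[\hat X_P]$ and then read off all the geometric statements on the fan-algebra side. For each $\underline a\in\Gamma$ I would, using Lemma~\ref{quasi-T-invariant}(b), pick a $\hat T$-eigenfunction $g_{\underline a}$ with $\nu(g_{\underline a})=\underline a$. By Corollary~\ref{coro:explicit:quasi:valuation:gives:weight} the $\hat T$-weight and degree of $g_{\underline a}$ are determined by $\underline a$; since each $\hat T$-weight space of $\mathbb K[\hat X_P]$ is one-dimensional (Lemma~\ref{explicit:basis}), the leaf $F_{\underline a}=\mathbb K[\hat X_P]_{\ge^t\underline a}/\mathbb K[\hat X_P]_{>^t\underline a}$ is spanned by $[g_{\underline a}]$, and the collection $\{[g_{\underline a}]\}_{\underline a\in\Gamma}$ is a $\mathbb K$-basis of $\mathrm{gr}_\nu\mathbb K[\hat X_P]$. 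I then define $\Phi:\mathbb K[y_{\underline a}\mid\underline a\in\Gamma]\to\mathrm{gr}_\nu\mathbb K[\hat X_P]$ by $y_{\underline a}\mapsto[g_{\underline a}]$ and check that the defining relations of $I(\Gamma)$ hold: by Corollary~\ref{coro:support:additive}, if $\mathrm{supp}\,\underline a$ and $\mathrm{supp}\,\underline b$ both lie in a common maximal chain then $\nu(g_{\underline a}g_{\underline b})=\underline a+\underline b$ and $[g_{\underline a}g_{\underline b}]$ is a nonzero scalar multiple of $[g_{\underline a+\underline b}]$, while otherwise $\nu(g_{\underline a}g_{\underline b})>^t\underline a+\underline b$ and the product is zero in the leaf of degree $\underline a+\underline b$. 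One obstacle here is to normalize the $g_{\underline a}$ so that the scalar is always $1$; the cleanest route is to set $g_{e_\sigma}:=f_\sigma$ for $\sigma\in A$, then use Corollary~\ref{weight:and:degree:formula1} to fix representatives in each $\Gamma_\mathfrak C$ coherently via products of extremal functions inside the localization $\mathbb K[\hat X_P]_{F_{\mathfrak C}}$, after which Lemma~\ref{product:extremal:quasi11} forces the normalization to hold.

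Once $\bar\Phi:\mathbb K[\Gamma]\xrightarrow{\sim}\mathrm{gr}_\nu\mathbb K[\hat X_P]$ is established, the independence statements follow immediately from Theorem~\ref{theorem:finite:generation}(iii): $\Gamma$ is already known to depend only on $\mathcal T$, and nowhere in its construction does $>^t$ intervene. For the geometric part, I would describe the primes of $\mathbb K[\Gamma]$ in combinatorial terms. For each maximal chain $\mathfrak C\subseteq A$ let $\mathfrak p_\mathfrak C\subseteq\mathbb K[\Gamma]$ be the ideal generated by $\{y_{\underline a}\mid\underline a\in\Gamma,\,\underline a\notin K_\mathfrak C\}$. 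By the definition of the fan algebra, the quotient $\mathbb K[\Gamma]/\mathfrak p_\mathfrak C$ is canonically the semigroup algebra $\mathbb K[\Gamma_\mathfrak C]$, which by Theorem~\ref{theorem:finite:generation}(iii) is identified with $\mathbb K[S_\mathfrak C]$, the coordinate ring of the (affine cone over the) normal toric variety attached to the rational polyhedral cone $K(\Delta_\mathfrak C)$; thus $\mathfrak p_\mathfrak C$ is prime.

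Reducedness then follows from $\bigcap_\mathfrak C\mathfrak p_\mathfrak C=0$: any $z=\sum c_i y_{\underline a_i}\in\mathbb K[\Gamma]$ lying in every $\mathfrak p_\mathfrak C$ would have the property that each $\underline a_i$ with $c_i\neq 0$ fails to sit in any $K_\mathfrak C$, which is impossible since every $\underline a\in\Gamma$ has support contained in some maximal chain. Equivalently, the induced map $\mathbb K[\Gamma]\hookrightarrow\prod_{\mathfrak C}\mathbb K[\Gamma_\mathfrak C]$ into a product of integral domains is injective. Irredundance of the decomposition $X_0=\bigcup_\mathfrak C X_\mathfrak C$ follows because for $\mathfrak C\neq\mathfrak C'$ one can pick $\sigma\in\mathfrak C\setminus\mathfrak C'$ and observe that $y_{e_\sigma}\in\mathfrak p_{\mathfrak C'}\setminus\mathfrak p_\mathfrak C$, so the $\mathfrak p_\mathfrak C$ are pairwise incomparable and hence are exactly the minimal primes of $\mathbb K[\Gamma]$.

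Finally, equidimensionality is a dimension count on each component: by Lemma~\ref{weights:lin:indep} the marking points indexed by a maximal chain $\mathfrak C$ form a $\mathbb Q$-basis of $\hat M_\mathbb Q$, so $K(\Delta_\mathfrak C)$ is a full-dimensional rational polyhedral cone in $\hat M_\mathbb R$; hence $\mathbb K[S_\mathfrak C]$ has Krull dimension $\dim\hat M=\dim X_P+1$, and $X_\mathfrak C=\mathrm{Proj}\,\mathbb K[S_\mathfrak C]$ has dimension $\dim X_P$. The main conceptual obstacle is really the first paragraph, namely the coherent choice and normalization of representatives $g_{\underline a}$ that makes $\bar\Phi$ into an \emph{algebra} homomorphism rather than only an additive bijection of graded vector spaces; everything else is a translation of statements about $\Gamma$ already proved in Section~\ref{quasi-valuation:sec}.
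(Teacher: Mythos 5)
Your overall strategy is the same as the paper's: build an explicit algebra isomorphism $\mathbb K[\Gamma]\to\mathrm{gr}_\nu\mathbb K[\hat X_P]$ by sending $y_{\underline a}$ to a $\hat T$-eigenfunction representative of the one-dimensional leaf, verify the fan-algebra relations via Corollary~\ref{coro:support:additive}, identify $\mathbb K[\Gamma]/\mathfrak p_\mathfrak C\simeq\mathbb K[\Gamma_\mathfrak C]$ as a semigroup algebra to get primeness, and deduce reducedness, irredundance and equidimensionality from there. Your $\mathfrak p_\mathfrak C$ (generated by the $y_{\underline a}$ with $\underline a\notin K_\mathfrak C$) coincides with the paper's $I_\mathfrak C=\mathrm{Ann}(y_\mathfrak C)$, and your dimension count via full-dimensionality of $K(\Delta_\mathfrak C)$ is a valid alternative to the paper's algebraic-independence argument.

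The one place your proposal has a real gap is the normalization of the representatives $g_{\underline a}$, which you correctly identify as the crux but do not actually resolve. Your suggested fix — set $g_{e_\sigma}:=f_\sigma$ and then define $g_{\underline a}$ for general $\underline a\in\Gamma_\mathfrak C$ ``via products of extremal functions inside the localization'' — does not quite work: the entries of $\underline a$ may be fractional, so Corollary~\ref{weight:and:degree:formula1} only gives $g^k=c\prod f_\sigma^{ka_\sigma}$ for suitable $k>0$, which pins down $g_{\underline a}$ only up to a $k$-th root of unity; you would also need to check that the choices made for different maximal chains agree on the overlap $\Gamma_{\mathfrak C\cap\mathfrak C'}$. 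The paper bypasses all of this by using the canonical monomial basis $\{f_{m,\eta}\mid(m,\eta)\in S\}$ of Lemma~\ref{explicit:basis}: given $\underline a\in\Gamma$, Corollary~\ref{coro:explicit:quasi:valuation:gives:weight} determines the pair $(m,\eta)\in S$, one sends $y_{\underline a}\mapsto\bar f_{m,\eta}$, and since the $f_{m,\eta}$ are restrictions of monomials in the $x_\chi$ one has $f_{m,\eta}\cdot f_{m',\eta'}=f_{m+m',\eta+\eta'}$ exactly, with no scalar and no choice to be made. If you replace your ad hoc normalization with this canonical choice, the rest of your argument goes through verbatim.
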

\begin{proof}
The classes $\{\bar f_{m,\eta}\mid (m,\eta)\in S\}$ of the basis elements of $\mathbb K[\hat X_P]$ (Lemma~\ref{explicit:basis})
form a basis for the associated graded algebra $\mathrm{gr}_{\nu} \mathbb K[\hat X_P]$. 
We have a natural map $\pi$ between the basis of $\mathbb K[\Gamma]$ and the basis of 
$\mathrm{gr}_{\nu} \mathbb K[\hat X_P]$:
it sends $y_{\underline a}$, $\underline a\in\Gamma$ to $\bar f_{m,\eta}$, where $(m,\eta)= \sum_{\sigma\in A} a_\sigma \mu_\sigma$. 
This map extends linearly to a vector space isomorphism $\pi:\mathbb K[\Gamma]
\rightarrow \mathrm{gr}_{\nu} \mathbb K[\hat X_P]$, which by Lemma~\ref{monoid:structure} is an algebra isomorphism.

The algebra $\mathbb K[\Gamma]$ has no nilpotent elements, so $X_0=\textrm{Proj\,} (\mathbb K[\Gamma])$ is 
reduced. Set $y_{\mathfrak C}=\prod_{\sigma\in\mathfrak C} y_{e_\sigma}$ and let $I_\mathfrak C$ be the annihilator
of $y_{\mathfrak C}$ in $\mathbb K[\Gamma]$. The quotient $\mathbb K[\Gamma]/I_\mathfrak C$ is isomorphic to
$\mathbb K[\Gamma_\mathfrak C]$, an algebra which has no zero-divisors. Hence $I_\mathfrak C$ is a prime ideal. It also follows that the intersection
$\bigcap_{\mathfrak C\in\mathcal F_{\max}(A)}I_{\mathfrak C}=(0)$ is the zero ideal.

The ideal $I_\mathfrak C$ is a minimal prime ideal: suppose $I\subsetneq I_\mathfrak C$ is an ideal.
Then $\mathbb K[\Gamma]/I$ contains an element $\bar g\not=0$ such that $\nu(g)\not\in \Gamma_\mathfrak C$,
and hence $y_{\mathfrak C}\bar g=0$ by  Corollary~\ref{coro:support:additive}. So the quotient has a zero divisor
and $I$ is hence not a prime ideal. It follows that $\bigcap_{\mathfrak C}I_\mathfrak C=(0)$
is the minimal prime decomposition of the zero ideal in $\mathbb K[\Gamma]$. For a maximal chain $\mathfrak C'$, 
$y_{\mathfrak C'}$ is a non-zero element in the intersection 
$\bigcap_{\mathfrak C\not=\mathfrak C'}I_\mathfrak C$. This shows that the intersection $\bigcap_{\mathfrak C}I_\mathfrak C$ is non-redundant.

An irreducible component of $X_0$ is hence isomorphic to $X_{\mathfrak C}=\textrm{Proj\,} (\mathbb K[\Gamma_\mathfrak C])$
for some maximal chain $\mathfrak C=(\tau_r,\ldots,\tau_0)$. By definition, the functions $y_{e_{\tau_i}}$, $0\le i\le r$,
are algebraically independent and all other functions $y_{\underline a}$, $\underline a\in\Gamma_{\mathfrak C}$
depend algebraically on these functions. It follows $\dim X_{\mathfrak C}=
\dim \textrm{Proj\,} (\mathbb K[\Gamma_\mathfrak C])=r=\dim X_P$.
\end{proof}

\section{A flat degeneration induced by a $\mathbb G_m$-action}\label{sec:one:parameter}
Let $\texttt{G}_1=\{f_{1,\chi_1},\ldots,  f_{1,\chi_r}\}$ be the degree $1$ elements in the basis 
(see Lemma~\ref{explicit:basis}) of  $\mathbb K[\hat X_P]$. They generate $\mathbb K[\hat X_P]$,
but, in general, their classes $\bar f_{1,\chi_1},\ldots, \bar f_{1,\chi_r}$ do not  generate
$\mathrm{gr}_\nu\mathbb K[\hat X_P]$. So to describe a flat degeneration $X_P\rightsquigarrow X_0$, 
we replace the given embedding $\hat\iota:\hat X_P\hookrightarrow  V$
(see Section~\ref{sec:toric:variety}) by an embedding $\hat X_P\hookrightarrow V\oplus U$ into a larger space. 
\begin{example}
We take the same polytope $P\subseteq \mathbb R^2$ and lattice $M$ as in Example~\ref{square2x2},
with the same marking except for the edge $\sigma$ joining the vertices $(0,2)$ and $(2,2)$,
here we take as marking the point $(\frac{4}{3},2)$.
Let $\mathcal T_\mathfrak m$ be the associated triangulation, let $\mathfrak C$ be the maximal chain
starting with the vertex $(2,2)$, the edge joining  $(0,2)$ and $(2,2)$ and $P$ as maximal element.
Denote by $\Delta_\mathfrak C$ the corresponding simplex, the vertices are
$(2,2),(\frac{4}{3},2)$ and $(1,1)$. The points $(2,3,4)$ and $(3,4,6)$ are   
elements in $S_\mathfrak C=S\cap K(\Delta_\mathfrak C)$,
but they are not elements of the submonoid generated by 
$\{(1,a,b)\mid (a,b)\in\Delta_\mathfrak C\cap M\}=\{(1,1,1),(1,2,2)\}$.
So by the multiplication rules in $\mathrm{gr}_\nu\mathbb K[\hat X_P]$ (see Definition~\ref{Defn:FanAlgebra},
Theorem~\ref{theorem:finite:generation} and Theorem~\ref{fanAndDegeneratetheorem}),
to get a generating system for $\mathrm{gr}_\nu\mathbb K[\hat X_P]$, one has to add at least the classes 
$\bar f_{(2,3,4)}$ and $\bar f_{(3,4,6)}$.
\end{example}
We add to $\texttt{G}_1$ some higher degree elements $\texttt{G}=\texttt G_1\cup \{f_{m_{r+1},\chi_{r+1}},\ldots,  f_{m_p,\chi_p}\}$ 
taken from the basis (Lemma~\ref{explicit:basis}) so that $\overline{\texttt{G}}=\{\bar f_{m,\chi} \mid  f_{m,\chi}\in \texttt{G}\}$ is a generating system for $\mathrm{gr}_\nu\mathbb K[\hat X_P]$. 

Note that for all maximal chains $\mathfrak C$ holds: 
$\overline{\texttt{G}}_\mathfrak C=\{\bar f_{m,\chi} \mid  f_{m,\chi}\in \texttt{G}, \mathrm{supp}\nu(f_{m,\chi})\subseteq \mathfrak C\}$
generates the subalgebra $\mathbb K[\Gamma_\mathfrak C]\subseteq  
\mathbb K[\Gamma]\simeq \mathrm{gr}_\nu\mathbb K[\hat X_P]$. 
By construction, one has hence for the algebra $\mathbb K[V\oplus U]=\mathbb K[x_1,\ldots,x_p]$ two 
surjective algebra morphisms:
\begin{equation}\label{morphisms}
\begin{array}{rcl}
\bar\theta:\mathbb K[x_1,\ldots,x_p]&\rightarrow & \mathrm{gr}_\nu \mathbb K[\hat X_P];\\
\forall\, i=1,\ldots,p:\ x_i&\mapsto &\bar f_{m_i,\chi_i};\\
\end{array}\quad\textrm{and}\quad
\begin{array}{rcl}
\theta:\mathbb K[x_1,\ldots,x_p]&\rightarrow&  \mathbb K[\hat X_P];\\
\forall\, i=1,\ldots,p:\ x_i&\mapsto&  f_{m_i,\chi_i}.\\
\end{array}
\end{equation}
and corresponding embeddings $\bar\Theta: \hat X_0=\textrm{Spec\,} (\mathbb K[\Gamma])\rightarrow V\oplus U$ and 
$\Theta: \hat X_P\rightarrow V\oplus U$.
Since $ \hat X_P$ is already embedded in $V$, here is another description of the morphism  $\Theta$:
\begin{equation}\label{new:embedding}
\Theta:\hat X_P\rightarrow V\oplus U, \quad x\mapsto (x, f_{m_{r+1},\chi_{r+1}}(x),\ldots,f_{m_p,\chi_p}(x)).
\end{equation}

\subsection{Weighted projective varieties}\label{Sec:5.1}
We recall  some notation, for more details we refer to the notes \cite{Ho}. 
We endow the polynomial ring $\mathbb K[x_1,\ldots,x_p]$ with a 
$\mathbb N$-grading defined by $\deg_{\underline m} x_i=m_i$ (in particular $m_1=\ldots=m_r=1$).
We denote the ``$\mathrm{Proj}$'' of the  $\deg_{\underline m}$-graded ring by 
$\mathrm{Proj}_{\underline m}\mathbb K[x_1,\ldots,x_p]$ to avoid
confusion with  the standard projective space $\mathbb P(\mathbb K^p)=\mathrm{Proj}(\mathbb K[x_1,\ldots,x_p])$.

We denote the ``$\mathrm{Proj}$'' of the  $\deg_{\underline m}$-graded ring by $\mathbb P(m_1,\dots,m_p)$ and call it \emph{weighted projective space}.
Denote by $\texttt{Gr}_{\underline{m}}\simeq \mathbb K^*$ the grading group acting on $\mathbb K^p$ by 
$$\xi\cdot(a_1,\ldots,a_p)=
(\xi^{m_1}a_1,\ldots,\xi^{m_p}a_p)$$ 
for $\xi\in\texttt{Gr}_{\underline{m}}.$
 A more geometric description of $\mathbb P(m_1,\ldots,m_p)$ is given as a quotient by the action of the grading group: 
$\mathbb P(m_1,\ldots,m_p)=(\mathbb K^p\setminus\{0\}) / \texttt{Gr}_{\underline{m}}$.

If $f\in \mathbb K[x_1,\ldots,x_p]$ is  $\deg_{\underline m}$-homogeneous, i.e.
$$f(\xi^{m_1}a_1,\ldots,\xi^{m_p}a_p)=\xi^d f(a_1,\ldots,a_p),$$ 
where $d=\deg_{\underline m}f$, then 
 $V(f)=\{[v]\in\mathbb P(m_1,\ldots,m_p)\mid f(v)=0 \}$ is well defined.

For a $\deg_{\underline m}$-homogeneous ideal $I\subset \mathbb K[x_1,\ldots,x_p]$
let $\hat Y=V(I)\subseteq \mathbb K^p$ be the vanishing set of $I$. We denote by
$Y=V(I)\subseteq \mathbb P(m_1,\ldots,m_p)$ the vanishing set of all 
$\deg_{\underline m}$-homogeneous $f$ in $I$. We call $Y$  
\emph{the weighted algebraic set} associated to $I$, and $\hat Y$ is called
the \emph{affine quasi-cone over $Y$}, note that $Y=(\hat Y\setminus\{0\})/\texttt{Gr}_{\underline{m}}$.
If $Y$ is irreducible, then $Y$ is called a \emph{weighted projective variety}. 
If $I$ is a radical ideal, then the graded ring $ \mathbb K[x_1,\ldots,x_p]/I$ is called the 
homogeneous coordinate ring $\mathbb K[\hat Y]$ of $Y=V(I)$.

The morphisms $\bar\theta$ and $\theta$ defined in \eqref{morphisms} send monomials of 
$\deg_{\underline m}$-degree $d$ to monomials in the $\bar f_{m_i,\chi_i}$ (respectively 
$ f_{m_i,\chi_i}$) of the same degree with respect to the standard grading on 
$\mathrm{gr}_\nu \mathbb K[\hat X_P]$ (respectively on $\mathbb K[\hat X_P]$). Both rings, 
$\mathrm{gr}_\nu\mathbb K[\hat X_P]$ and $\mathbb K[\hat X_P]$,
are reduced, so $\ker\bar\theta$, as well as $\ker\theta$, are $\deg_{\underline m}$-homogeneous radical ideals 
and define weighted algebraic sets in $\mathbb P(m_1,\ldots,m_p)$, isomorphic to $X_0$ respectively $X_P$.
Since $\ker\theta$ is a prime ideal, note that $X_P=V(\ker\theta)\subseteq \mathbb P(m_1,\ldots,m_p)$
is a weighted projective variety.

For a polynomial $g(x_1,\ldots,x_p)=\sum_{\alpha} b_\alpha x^\alpha$  we use the multi-index notation,
i.e. $\alpha=(\alpha_1,\ldots,\alpha_p)\in\mathbb N^p$ and $x^\alpha=x_1^{\alpha_1}\cdots x_p^{\alpha_p}$,
and for the polynomial ring $\mathbb K[x_1,\ldots,x_p]$ we just write $\mathbb K[\underline x]$.
Note that for a monomial $x^\alpha$ we have: $\deg_{\underline m}x^\alpha=\deg \theta(x^\alpha)$.

We endow $\mathbb K^p$ with a $\hat T$-action. Let $\{e_1,\ldots,e_p\}$ be the standard basis of $\mathbb K^p$, 
we set for $\hat t=(c,t)\in\hat T$: $\hat t\cdot e_i=c^{m_i}\chi_i(t) e_i$, $i=1,\ldots,p$. We get an induced $T$-action on the 
corresponding weighted projective space $\mathbb P(m_1,\ldots,m_p)$.

The polynomial ring $\mathbb K[\underline x]$ gets endowed with an induced $\hat T$-action by algebra isomorphisms, 
here $x_i$ becomes a $\hat T$-eigenfunction of weight $(-m_i,-\chi_i)$,
$i=1,\ldots,p$. So the morphisms $\bar\theta$ and $\theta$ are surjective, $\hat T$-equivariant and $\deg_{\underline m}$-preserving morphisms.

\subsection{A global monomial preorder}\label{filtration:and:basis}
We define a total order on $\mathbb N\times \mathbb Q^A_{\ge 0}$ by:
$(m',\underline a')\preceq (m,\underline a) \textrm{\ if\ }m'<m \textrm{\ or\ }m'=m,\, \underline a'\ge^t \underline a$.
We now endow the polynomial ring $\mathbb K[\underline x]$  with  an $\mathbb N\times \mathbb Q_{\ge 0}^A$-grading.
\begin{definition}\label{degAgrading}
The $\mathbb N\times \mathbb Q^A_{\ge 0}$-grading is defined by $\deg_{A} x_i= (m_i,\nu( f_{m_i,\chi_i}))$, $i=1,\ldots,p$,
and  $\deg_{A} 1=0$.
We introduce on the set of all monomials in $\mathbb K[\underline x]$ a binary relation:  $x^\alpha\succeq_A x^\beta$ if $\deg_{A}x^\alpha\succeq \deg_{A}x^\beta$.
\end{definition}
Since ``$\succeq$'' defines a total order on $\mathbb N\times \mathbb Q^A_{\ge 0}$,  the induced
binary relation ``$\succeq_A$'' on the set of monomails is a \emph{weak order} (or \emph{total preorder}).
By definition, $\deg_{A}$ is additive, i.e. $\deg_{A}x^\alpha x^\beta=\deg_{A}x^\alpha + \deg_{A}x^\beta$.
The additivity of $\deg_A$  implies that ``$\preceq_A$'' is compatible and cancellative with
the multiplication, i.e. if $x^\alpha, x^\beta,  x^\gamma$ are monomials, then
$$
x^\alpha\succ_A x^\beta \Leftrightarrow x^\alpha x^\gamma\succ_A x^\beta x^\gamma.
$$
It follows that ``$\prec_A$'' is a \emph{monomial preorder} (see \cite{KTv}). 
The total degree part of the order ensures in addition that $1\prec_A x_i$  for all $i = 1,\ldots,p$, and if 
$x^\alpha,x^\beta$ are monomials such that $x^\alpha\not= 1$,
then  $x^\alpha x^\beta \succ_A x^\beta$. 
So we have a \emph{global monomial preorder}, see \cite{KTv}. 
\begin{definition}\label{defn:initial:term}
The \emph{initial term} $\textrm{in}_\nu g$ of a non-zero polynomial $g\in\mathbb K[\underline x]$ is the  sum
of the greatest terms of $g$ with respect to the global monomial preorder ``$\succ_A$''. If $I\subseteq \mathbb K[\underline x]$
is an ideal, then denote by $\textrm{in}_\nu I$ the ideal generated by the elements $\textrm{in}_\nu g$, $g\in I$.
\end{definition}
\begin{remark}
If $f$ is $\deg_{\underline m}$-homogeneous, then so is $\textrm{in}_\nu f$. In particular,
if the ideal $I$ is $\deg_{\underline m}$-homogeneous, then so is the ideal $\textrm{in}_\nu I$.
\end{remark}
\subsection{Minimal lifts}
Let $\mathcal I$ be the subset 
$$\{(m,\underline a)\in \mathbb N\times \mathbb Q^A_{\ge 0}
\mid \underline a\in \Gamma, m=\sum_{\sigma\in A} a_\sigma\deg f_\sigma\}\subseteq \mathbb N\times \mathbb Q^A_{\ge 0}.$$
\begin{definition}
The map $\textrm{val}_\nu: \mathbb K[\underline x]\setminus\{0\}\rightarrow \mathcal I$ is defined 
for a monomial $g=x^\alpha$ by  $\textrm{val}_\nu g:=(\deg_{\underline m} g,\nu( \theta(g)))\in\mathcal I$. 
For a polynomial $g=\sum_{\alpha} b_\alpha x^\alpha$  we define  
$\textrm{val}_\nu g$ to be the maximum of the values of the summands: 
$\mathrm{val}_\nu g={\max}_{\succeq}\{\textrm{val}_\nu(x^\alpha)\mid b_\alpha\not=0\}$. 
\end{definition}

\begin{definition}
A monomial $\prod_{i=1}^p x_i^{\ell_i}\in \mathbb K[\underline x]$ is called a \emph{minimal monomial} if 
there exists a maximal chain $\mathfrak C$ in $A$ such that 
$\{ \nu(\theta(x_i))\mid 1\le i\le p\textrm{\,and\,} \ell_i>0\}\subseteq\Gamma_{\mathfrak C}$.
We call such a maximal chain a \emph{support chain} for the minimal monomial.
\end{definition}
By the properties of a quasi-valuation we know $\nu(\theta(x^\alpha))\ge^t \sum_{i=1}^p \alpha_i \nu(\theta(x_i))$
and hence 
\begin{equation}\label{THE:INEQUALITY}
\textrm{val}_\nu x^\alpha=(\deg_{\underline m} x^\alpha,\nu(\theta(x^\alpha)) \preceq \left(\deg_{\underline m} x^\alpha,\sum_{i=1}^p \alpha_i \nu(\theta(x_i))\right) = \deg_A x^\alpha.
\end{equation}
Together with Corollary~\ref{coro:support:additive} one has:
\begin{lemma}\label{val:deg}
For a monomial $x^\alpha\in \mathbb K[\underline x]\setminus\{0\}$ holds: $\mathrm{val}_\nu x^\alpha \preceq \deg_A  x^\alpha$,
and we have equality: $\mathrm{val}_\nu x^\alpha = \deg_A  x^\alpha$ if and only if the monomial $x^\alpha$ is minimal.
\end{lemma}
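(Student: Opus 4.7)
The plan is to reduce both assertions to the already-established inequality \eqref{THE:INEQUALITY} and to Corollary~\ref{coro:support:additive}. First observe that the first coordinates of $\mathrm{val}_\nu x^\alpha$ and $\deg_A x^\alpha$ are both equal to $\deg_{\underline m}x^\alpha = \sum_i \alpha_i m_i$, so by the definition of ``$\preceq$'' the whole comparison takes place in the $\mathbb Q^A$-factor and is equivalent to the question whether $\nu(\theta(x^\alpha)) \ge^t \sum_{i=1}^{p} \alpha_i \nu(\theta(x_i))$, with equality holding iff the $\preceq$-inequality is an equality.

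The inequality $\mathrm{val}_\nu x^\alpha \preceq \deg_A x^\alpha$ is then just \eqref{THE:INEQUALITY}, which in turn follows by iterated application of the quasi-additivity and homogeneity of $\nu$. For the ``if'' direction of the equality statement, suppose $x^\alpha$ is minimal with support chain $\mathfrak C$, i.e.\ $\nu(\theta(x_i))\in\Gamma_{\mathfrak C}$ for every $i$ with $\alpha_i>0$. I would induct on $|\alpha|=\sum_i\alpha_i$: picking any $i$ with $\alpha_i>0$, write $x^\alpha=x_i\cdot x^{\alpha-e_i}$, observe that the inductive hypothesis gives $\nu(\theta(x^{\alpha-e_i}))=\sum_j(\alpha_j-\delta_{ij})\nu(\theta(x_j))$ with support in $\mathfrak C$, and apply Corollary~\ref{coro:support:additive} to the product $x_i\cdot x^{\alpha-e_i}$, whose two factors have $\nu$-supports both contained in $\mathfrak C$, to obtain additivity in this step as well.

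For the converse, assume $\nu(\theta(x^\alpha))=\sum_{i=1}^p\alpha_i\nu(\theta(x_i))$. Since $\nu(\theta(x^\alpha))\in\Gamma$, its support is contained in some maximal chain $\mathfrak C$ (this is how $\Gamma$ was built in Section~\ref{2:fans:monoids}). By Proposition~\ref{prop:simplex} every $\nu(\theta(x_i))$ has only non-negative entries, hence the support of the right-hand sum is exactly $\bigcup_{i:\alpha_i>0}\mathrm{supp}\,\nu(\theta(x_i))$. This union must therefore sit inside $\mathfrak C$, i.e.\ each $\nu(\theta(x_i))$ with $\alpha_i>0$ lies in $\Gamma_{\mathfrak C}$, which is precisely the definition of minimality.

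There is no genuinely hard step here: the only place where a subtlety could creep in is the last paragraph, where the identification ``support of the sum = union of the supports'' requires the non-negativity supplied by Proposition~\ref{prop:simplex}, so that no cancellation between the contributions of different factors can occur. Everything else is either bookkeeping with the order $\preceq$ on $\mathbb N\times\mathbb Q^A_{\ge 0}$ or a routine induction built on Corollary~\ref{coro:support:additive}.
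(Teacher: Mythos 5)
Your argument is correct and follows essentially the approach the paper intends: the paper presents the lemma as an immediate consequence of \eqref{THE:INEQUALITY} and Corollary~\ref{coro:support:additive}, and you have simply filled in the details of the implicit ``Together with'' — an induction on $|\alpha|$ for the ``if'' direction, and the non-negativity from Proposition~\ref{prop:simplex} (so no cancellation occurs in $\sum_i\alpha_i\nu(\theta(x_i))$) for the converse. Both steps track exactly the ingredients the paper cites, so this is the same proof written out in full rather than a different route.
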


Consider for $(m,\eta)\in S$ the function $f_{m,\eta}\in\mathbb K[\hat X_p]$. 
We know $\nu(f_{m,\eta})\in\Gamma_\mathfrak C$ for some maximal chain $\mathfrak{C}$.
Let $1\le i_1<\ldots<i_\ell\le p$ be such that $\overline{\texttt{G}}_\mathfrak C=\{\bar f_{m_{i_1},\chi_{i_1}},\ldots,
\bar f_{m_{i_\ell},\chi_{i_\ell}}\}$.  By the assumptions made at the beginning of this section, we can write
$\nu(f_{m,\eta})$ as a $\mathbb N$-linear combination: $\nu(f_{m,\eta})=\sum_{j=1}^{\ell} b_i\nu(f_{m_{i_j},\chi_{i_j}})$.
By taking the coefficients as exponents, we find a monomial 
$\mathbf f_{m,\eta}:=x_{i_1}^{b_{i_1}}\cdots x_{i_\ell}^{b_{i_\ell}}\in \mathbb K[\underline x]$
with the following property: it is a minimal monomial
such that $\bar\theta(\mathbf f_{m,\eta})=\bar f_{m,\eta}\in \mathrm{gr}_\nu \mathbb K[\hat X_P]$ and 
$\theta(\mathbf f_{m,\eta})=f_{m,\eta}\in \mathbb K[\hat X_P]$. 

\begin{definition} \label{fixed:minimal:lift}
We fix for all  $(m,\eta)\in S$ such a  lift for $f_{m,\eta}$: 
$\mathbf f_{m,\eta}=x_{i_1}^{b_{i_1}}\cdots x_{i_\ell}^{b_{i_\ell}}\in \mathbb K[\underline x]$,
called \emph{the fixed minimal lift} for $f_{m,\eta}\in \mathbb K[\hat X_P]$,
$({m,\eta})\in S$.
\end{definition}
\subsection{A basis compatible with $\ker\bar\theta$}\label{basis:ker:bar:theta}
Let $\overline{\mathbb B}_1$ be the set of all monomials in $\mathbb K[\underline x]$ which are not minimal, and let
$\overline{\mathbb B}_3$ be the set of all fixed minimal lifts: $\overline{\mathbb B}_3=\{\mathbf f_{m,\eta}\mid (m,\eta)\in S\}$. Finally, we set 
$$
\overline{\mathbb B}_2=\left\{x^\alpha-\mathbf f_{m,\eta}
\left\vert\  \substack{(m,\eta)\in S,\ \bar\theta(x^\alpha)=\bar f_{m,\eta},\ \mathbf f_{m,\eta}\not=x^\alpha\\
x^\alpha \textrm{\ minimal monomial}\ }
\right.\right\}
$$

\begin{lemma}\label{basisA}
The union $\overline{\mathbb B}=\overline{\mathbb B}_1\cup \overline{\mathbb B}_2\cup \overline{\mathbb B}_3$ is a basis 
for $\mathbb K[\underline x]$ which is homogeneous with  the  $\deg_{\underline m}$-grading and  the $\deg_{A}$-grading.
In addition, $\overline{\mathbb B}_1\cup \overline{\mathbb B}_2$ is a basis for $\ker\bar\theta$, and the image of 
 $\overline{\mathbb B}_3$ is a basis for $\mathrm{gr}_\nu \mathbb K[\hat X_P]\simeq \mathbb K[\underline x]/\ker\bar\theta$.
\end{lemma}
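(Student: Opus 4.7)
The plan is to reduce everything to two basic facts about $\bar\theta$ on monomials. First I would show:
(a) a monomial $x^\alpha$ lies in $\ker\bar\theta$ if and only if $x^\alpha$ is non-minimal, and
(b) for a minimal monomial $x^\alpha$, one has $\bar\theta(x^\alpha)=\bar f_{m,\eta}$, where $(m,\eta)=\sum_{i}\alpha_i(m_i,\chi_i)\in S$.
Both claims follow by rewriting $\bar\theta(x^\alpha)=\prod_i\bar f_{m_i,\chi_i}^{\alpha_i}$ and using the isomorphism $\mathrm{gr}_\nu\mathbb K[\hat X_P]\simeq\mathbb K[\Gamma]$ of Theorem~\ref{fanAndDegeneratetheorem} together with the multiplication rules of Definition~\ref{Defn:FanAlgebra}: the product $\prod_i y_{\nu(f_{m_i,\chi_i})}^{\alpha_i}$ vanishes precisely when the supports $\nu(f_{m_i,\chi_i})$ with $\alpha_i>0$ fail to lie in a common chain, i.e. when $x^\alpha$ is non-minimal, and otherwise equals $y_{\underline a}$ with $\underline a=\sum_i\alpha_i\nu(f_{m_i,\chi_i})\in\Gamma$, corresponding via the isomorphism of Theorem~\ref{theorem:finite:generation}\textit{(iii)} to the basis element $\bar f_{m,\eta}$.

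Next I would check homogeneity. Elements of $\overline{\mathbb B}_1$ and $\overline{\mathbb B}_3$ are monomials, hence automatically bihomogeneous. For $x^\alpha-\mathbf f_{m,\eta}\in\overline{\mathbb B}_2$, the morphism $\bar\theta$ is $\deg_{\underline m}$-preserving, so $\bar\theta(x^\alpha)=\bar f_{m,\eta}=\bar\theta(\mathbf f_{m,\eta})$ forces $\deg_{\underline m}x^\alpha=m=\deg_{\underline m}\mathbf f_{m,\eta}$. For $\deg_A$-homogeneity, both $x^\alpha$ and $\mathbf f_{m,\eta}$ are minimal by the definition of $\overline{\mathbb B}_2$ and Definition~\ref{fixed:minimal:lift}, so by Lemma~\ref{val:deg}
\[
\deg_A x^\alpha=\mathrm{val}_\nu x^\alpha=(m,\nu(\theta(x^\alpha)))=(m,\nu(f_{m,\eta}))=\mathrm{val}_\nu\mathbf f_{m,\eta}=\deg_A\mathbf f_{m,\eta},
\]
where the middle equality uses (b) above, lifted to $\theta$: a minimal monomial $x^\alpha$ satisfies $\nu(\theta(x^\alpha))=\sum_i\alpha_i\nu(f_{m_i,\chi_i})=\nu(f_{m,\eta})$ by Corollary~\ref{coro:support:additive}.

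Then I would show that $\overline{\mathbb B}$ is a basis of $\mathbb K[\underline x]$ by exhibiting an explicit triangular change from the monomial basis. Every monomial $x^\alpha$ either sits in $\overline{\mathbb B}_1$ (non-minimal case) or, if minimal with $\bar\theta(x^\alpha)=\bar f_{m,\eta}$, either equals $\mathbf f_{m,\eta}\in\overline{\mathbb B}_3$ or can be written as $(x^\alpha-\mathbf f_{m,\eta})+\mathbf f_{m,\eta}$ with the first summand in $\overline{\mathbb B}_2$ and the second in $\overline{\mathbb B}_3$. This shows spanning, and the transition matrix from monomials to $\overline{\mathbb B}$ is upper unitriangular (if we order monomials so each $\mathbf f_{m,\eta}$ precedes the other minimal monomials mapping to $\bar f_{m,\eta}$), giving linear independence.

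Finally, the images under $\bar\theta$ of elements of $\overline{\mathbb B}_3$ are exactly the basis $\{\bar f_{m,\eta}\mid(m,\eta)\in S\}$ of $\mathrm{gr}_\nu\mathbb K[\hat X_P]$ (Lemma~\ref{explicit:basis}), while elements of $\overline{\mathbb B}_1$ are killed by (a) and elements of $\overline{\mathbb B}_2$ are killed by construction. So $\bar\theta$ restricted to the span of $\overline{\mathbb B}_3$ is a linear isomorphism onto $\mathrm{gr}_\nu\mathbb K[\hat X_P]$, forcing $\overline{\mathbb B}_1\cup\overline{\mathbb B}_2$ to be a basis of $\ker\bar\theta$. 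The only real obstacle is the clean identification in step (a)/(b) of when the fan-algebra product $\prod\bar f_{m_i,\chi_i}^{\alpha_i}$ is nonzero and which basis element it then equals; once this is in place the rest is a bookkeeping argument in linear algebra.
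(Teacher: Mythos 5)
Your proof is correct and follows essentially the same route as the paper's, which cites Theorem~\ref{fanAndDegeneratetheorem} for the kernel membership of non-minimal monomials and simply asserts that $\overline{\mathbb B}$ is "clearly" a basis. Your steps (a)/(b) spell out the fan-algebra computation behind that citation, and the upper-unitriangular change of basis from the monomial basis makes the "clearly" precise; the homogeneity argument via $\mathrm{val}_\nu$ and Lemma~\ref{val:deg} is exactly the one in the paper.
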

\begin{proof}
The union $\overline{\mathbb B}=\overline{\mathbb B}_1\cup \overline{\mathbb B}_2\cup \overline{\mathbb B}_3$ clearly is a basis for $\mathbb K[\underline x]$.
By Theorem~\ref{fanAndDegeneratetheorem} (for $\overline{\mathbb B}_1$) and construction (for $\overline{\mathbb B}_2$), 
$\overline{\mathbb B}_1\cup \overline{\mathbb B}_2\subseteq \ker\bar\theta$, whereas
$\overline{\mathbb B}_3$ is mapped by $\bar\theta$  bijectively onto a basis of $\mathrm{gr}_\nu \mathbb K[\hat X_P]$ (see \emph{ibidem}).
It follows that $\overline{\mathbb B}_1\cup \overline{\mathbb B}_2$ is a basis for $\ker\bar\theta$.

The elements in $\overline{\mathbb B}_2$ are $\textrm{val}_\nu$-homogeneous, i.e. for $\mathbf f_{m,\eta}-x^\alpha \in\overline{\mathbb B}_2$
one has $\textrm{val}_\nu(\mathbf f_{m,\eta})=\textrm{val}_\nu(x^\alpha)$ because $\theta(\mathbf f_{m,\eta})=
\theta(x^\alpha )$. Since both monomials are minimal by assumption, one has in addition
$\deg_A \mathbf f_{m,\eta}=\textrm{val}_\nu(\mathbf f_{m,\eta})=\textrm{val}_\nu(x^\alpha)=\deg_Ax^\alpha$, so the elements
are also $\deg_A$-homogeneous. The elements in $\overline{\mathbb B}_1$ and $\overline{\mathbb B}_3$
are just monomials, so  the basis is compatible with 
the  $\deg_{\underline m}$-grading and  the $\deg_{A}$-grading.
\end{proof}
\subsection{A basis compatible with $\ker\theta$}\label{theta_basis1}
%
To get a basis of $\mathbb K[\underline x]$ compatible with the  morphism
$\theta: \mathbb K[\underline x]\rightarrow \mathbb K[\hat X_P]$
we slightly change $\overline{\mathbb B}_1$ and set:
$$
 {\mathbb B}_1:=\left \{ 
x^\alpha-\mathbf f_{m,\eta}\left\vert\,  x^\alpha\in\overline{\mathbb B}_1, 
\theta(x^\alpha)= f_{m,\eta},\mathbf f_{m,\eta}\textrm{\ fixed minimal lift}
\right.\right\}, {\mathbb B}_2:= \overline{\mathbb B}_2,  {\mathbb B}_3:=\overline{\mathbb B}_3.
$$

\begin{lemma}\label{ker:theta:basis}
The union $ {\mathbb B}=  {\mathbb B}_1\cup  {\mathbb B}_2 \cup {\mathbb B}_3$ is a basis for $\mathbb K[\underline x]$
such that $  {\mathbb B}_1 \cup   {\mathbb B}_2$ is a basis for $\ker\theta$, which is compatible with  
the $\deg_{\underline m}$-grading, and the image of 
 ${\mathbb B}_3$ is a basis for $\mathbb K[\hat X_P]\simeq \mathbb K[\underline x]/\ker\theta$.
\end{lemma}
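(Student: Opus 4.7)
The plan is to reduce the statement to Lemma~\ref{basisA} by performing an explicit (and invertible) change of basis, and then to verify the compatibility conditions directly.

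First I would show that $\mathbb{B}$ is a basis of $\mathbb{K}[\underline x]$. The sets $\mathbb{B}_2$ and $\mathbb{B}_3$ coincide with $\overline{\mathbb{B}}_2$ and $\overline{\mathbb{B}}_3$, and the only change is that every non-minimal monomial $x^\alpha \in \overline{\mathbb{B}}_1$ is replaced by $x^\alpha - \mathbf{f}_{m,\eta}$, where $(m,\eta)$ is the weight such that $\theta(x^\alpha)=f_{m,\eta}$. Thus the transition from the basis $\overline{\mathbb{B}}$ of Lemma~\ref{basisA} to $\mathbb{B}$ is given by subtracting from each $x^\alpha \in \overline{\mathbb{B}}_1$ a single element $\mathbf{f}_{m,\eta} \in \overline{\mathbb{B}}_3$. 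In the ordered basis $(\overline{\mathbb{B}}_1,\overline{\mathbb{B}}_2,\overline{\mathbb{B}}_3)$, the change-of-basis matrix is block upper-unitriangular, hence invertible, so $\mathbb{B}$ is a basis of $\mathbb{K}[\underline x]$.

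Next I would verify $\mathbb{B}_1\cup\mathbb{B}_2\subseteq\ker\theta$. The key point, which I would first record as a small observation, is that $\theta(x^\alpha)=f_{m,\eta}$ \emph{exactly} (not merely up to scalar) whenever $(m,\eta)=\sum_i\alpha_i(m_i,\chi_i)$: this follows from the definition $\theta(x_i)=f_{m_i,\chi_i}=\prod_\chi x_\chi^{a_{i,\chi}}\!\mid_{\hat X_P}$ together with Lemma~\ref{explicit:basis}, which guarantees that the product $\prod_i f_{m_i,\chi_i}^{\alpha_i}$ is independent of the chosen decompositions and equals the basis element $f_{m,\eta}$ of the correct weight. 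Applying this observation on both $x^\alpha$ and $\mathbf{f}_{m,\eta}$ (recall $\theta(\mathbf{f}_{m,\eta})=f_{m,\eta}$ by Definition~\ref{fixed:minimal:lift}), gives $\theta(x^\alpha-\mathbf{f}_{m,\eta})=0$ for every element of $\mathbb{B}_1$. For $\mathbb{B}_2$, note that the hypothesis $\bar\theta(x^\alpha)=\bar f_{m,\eta}$ with $x^\alpha$ minimal forces the $\hat T$-weight of $\theta(x^\alpha)$ to equal the weight of $f_{m,\eta}$, so the same observation gives $\theta(x^\alpha)=f_{m,\eta}=\theta(\mathbf{f}_{m,\eta})$.

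To upgrade ``$\subseteq\ker\theta$'' to ``basis of $\ker\theta$'', I would use a dimension/quotient argument: the image $\theta(\mathbb{B}_3)=\{f_{m,\eta}\mid (m,\eta)\in S\}$ is, by Lemma~\ref{explicit:basis}, a basis of $\mathbb{K}[\hat X_P]$; since $\mathbb{B}$ is a basis of $\mathbb{K}[\underline x]$, the surjection $\theta:\mathbb{K}[\underline x]\to \mathbb{K}[\hat X_P]$ sends $\mathbb{B}_3$ bijectively onto a basis and kills $\mathbb{B}_1\cup\mathbb{B}_2$, so necessarily $\mathbb{B}_1\cup\mathbb{B}_2$ is a basis of $\ker\theta$ and $\theta(\mathbb{B}_3)$ is a basis of $\mathbb{K}[\underline x]/\ker\theta\simeq\mathbb{K}[\hat X_P]$. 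Finally, compatibility with the $\deg_{\underline m}$-grading is immediate: in each binomial $x^\alpha-\mathbf{f}_{m,\eta}$ appearing in $\mathbb{B}_1$ or $\mathbb{B}_2$, both summands have $\deg_{\underline m}$ equal to $m=\deg f_{m,\eta}$; elements of $\mathbb{B}_3$ are monomials and hence trivially homogeneous. There is no single ``hard step'' here: the only point requiring care is the identification $\theta(x^\alpha)=f_{m,\eta}$ on the nose, which is what allows one to replace $\deg_A$-homogeneity (lost because non-minimal $x^\alpha$ and minimal $\mathbf{f}_{m,\eta}$ have different $\deg_A$ by Lemma~\ref{val:deg}) with honest membership in $\ker\theta$.
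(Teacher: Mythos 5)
Your proof is correct and follows the same overall structure as the paper's: reduce the basis claim to Lemma~\ref{basisA} via a triangular change of basis, observe that $\theta$ kills $\mathbb B_1\cup\mathbb B_2$ while sending $\mathbb B_3$ bijectively onto the standard basis of $\mathbb K[\hat X_P]$ from Lemma~\ref{explicit:basis}, and conclude by a quotient/dimension argument. The one presentational difference is in how triangularity is justified: the paper invokes the strict inequality $\deg_A \mathbf f_{m,\eta}\prec\deg_A x^\alpha$ (which it needs anyway for the subsequent Lemma~\ref{in:bijection}), whereas you observe the change of basis is block upper-unitriangular simply because $\overline{\mathbb B}_1$ and $\overline{\mathbb B}_3$ are disjoint index sets; both are valid, and yours is arguably more elementary for this particular claim. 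You also spell out the key observation that $\theta(x^\alpha)=f_{m,\eta}$ \emph{on the nose} (including, via $\hat T$-equivariance, for the $\mathbb B_2$ elements where only $\bar\theta(x^\alpha)=\bar f_{m,\eta}$ is asserted), which the paper compresses into ``by construction''; this is a helpful expansion and not a gap.
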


\begin{proof}
For $x^\alpha\in \overline{\mathbb B}_1$ one has  
$\textrm{val}_\nu x^\alpha \prec \deg_A  x^\alpha$ (Lemma~\ref{val:deg}).
So if $x^\alpha-\mathbf f_{m,\eta}\in {\mathbb B}_1$, then, by the minimality of $\mathbf f_{m,\eta}$,
we have 
\begin{equation}\label{initial:calculation}
\deg_A  \mathbf f_{m,\eta} =\textrm{val}_\nu  \mathbf f_{m,\eta}= \textrm{val}_\nu x^\alpha 
\prec \deg_A  x^\alpha.
\end{equation} 
The switch from $\overline{\mathbb B}_1$ to ${\mathbb B}_1$ can be viewed
as a triangular base change, and hence ${\mathbb B}$ is a basis.
By construction, $\theta(f)=0$ for all $f\in   {\mathbb B}_1\cup   {\mathbb B}_2$,  
whereas $  {\mathbb B}_3$ is mapped by $\theta$ onto a 
basis of $\mathbb K[\hat X_P]$ (Lemma~\ref{explicit:basis}).
%
As a consequence we have:
$  {\mathbb B}_1\cup  {\mathbb B}_2$ is a basis for $\ker\theta$.

Since all the basis elements are homogeneous with respect to the $\deg_{\underline m}$-grading, 
the basis is compatible with the $\deg_{\underline m}$-grading.
\end{proof}

\begin{lemma}\label{in:bijection}
The map $  b\mapsto \mathrm{in}_\nu   b$, which sends an element $  b\in {\mathbb B}$
to its initial term, induces a bijection ${\mathbb B}\rightarrow \overline{\mathbb B}$ such that 
$\mathrm{in}_\nu  g\in \overline{\mathbb B}_j$ for $g\in   {\mathbb B}_j$, $j=1,2,3$.
\end{lemma}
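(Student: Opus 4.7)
The plan is to analyze the initial term of each element of $\mathbb{B}$, partitioned according to the three pieces $\mathbb{B}_1,\mathbb{B}_2,\mathbb{B}_3$, and show case by case that it lies in the corresponding piece of $\overline{\mathbb{B}}$, then extract bijectivity from these descriptions.

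First, the easy cases. Every $g\in\mathbb{B}_3=\overline{\mathbb{B}}_3$ is a single monomial (a fixed minimal lift), so $\mathrm{in}_\nu g=g\in\overline{\mathbb{B}}_3$; this is already the identity bijection $\mathbb{B}_3\to\overline{\mathbb{B}}_3$. For $g\in\mathbb{B}_2=\overline{\mathbb{B}}_2$ of the form $g=x^\alpha-\mathbf{f}_{m,\eta}$ with $x^\alpha$ a minimal monomial satisfying $\bar\theta(x^\alpha)=\bar f_{m,\eta}$, both $x^\alpha$ and $\mathbf{f}_{m,\eta}$ are minimal monomials with the same image in $\mathrm{gr}_\nu\mathbb{K}[\hat X_P]$, so $\nu(\theta(x^\alpha))=\nu(f_{m,\eta})=\nu(\theta(\mathbf{f}_{m,\eta}))$ and $\mathrm{val}_\nu x^\alpha=\mathrm{val}_\nu \mathbf{f}_{m,\eta}$. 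By Lemma~\ref{val:deg} applied to each minimal monomial we conclude $\deg_A x^\alpha=\deg_A \mathbf{f}_{m,\eta}$, so neither term dominates and $\mathrm{in}_\nu g=g\in\overline{\mathbb{B}}_2$; again the map is the identity.

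The interesting case is $\mathbb{B}_1$: for $g=x^\alpha-\mathbf{f}_{m,\eta}\in\mathbb{B}_1$ with $x^\alpha$ non-minimal and $\theta(x^\alpha)=f_{m,\eta}$, the strict inequality \eqref{initial:calculation} gives $\deg_A\mathbf{f}_{m,\eta}\prec\deg_A x^\alpha$, so the unique greatest term with respect to $\succ_A$ is $x^\alpha$, i.e.\ $\mathrm{in}_\nu g=x^\alpha\in\overline{\mathbb{B}}_1$. The map $\mathbb{B}_1\to\overline{\mathbb{B}}_1$ thus sends $x^\alpha-\mathbf{f}_{m,\eta}\mapsto x^\alpha$. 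To see this is a bijection, note that the inverse is explicit: given any non-minimal monomial $x^\alpha\in\overline{\mathbb{B}}_1$, the element $\theta(x^\alpha)\in\mathbb{K}[\hat X_P]$ is, by Lemma~\ref{explicit:basis}, equal to $f_{m,\eta}$ with $(m,\eta)=\sum_i\alpha_i(m_i,\chi_i)$, and the fixed minimal lift $\mathbf{f}_{m,\eta}$ of Definition~\ref{fixed:minimal:lift} is uniquely determined. Thus $x^\alpha\mapsto x^\alpha-\mathbf{f}_{m,\eta}$ is a two-sided inverse.

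Combining the three cases and observing that the partitions $\mathbb{B}=\mathbb{B}_1\sqcup\mathbb{B}_2\sqcup\mathbb{B}_3$ and $\overline{\mathbb{B}}=\overline{\mathbb{B}}_1\sqcup\overline{\mathbb{B}}_2\sqcup\overline{\mathbb{B}}_3$ are respected by the initial-term map gives the desired bijection, establishing both claims of the lemma. The only non-routine ingredient is the strict inequality $\deg_A\mathbf{f}_{m,\eta}\prec\deg_A x^\alpha$ in the $\mathbb{B}_1$-case, which was already carried out in \eqref{initial:calculation} of the proof of Lemma~\ref{ker:theta:basis} and is the mechanism that allows the switch from $\overline{\mathbb{B}}_1$ to $\mathbb{B}_1$ to be triangular with respect to $\succ_A$.
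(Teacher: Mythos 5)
Your proof is correct and follows the same approach as the paper's: identity on $\mathbb{B}_2\cup\mathbb{B}_3$ via $\deg_A$-homogeneity, and the strict inequality \eqref{initial:calculation} to conclude $\mathrm{in}_\nu(x^\alpha-\mathbf{f}_{m,\eta})=x^\alpha$ on $\mathbb{B}_1$. You spell out the $\deg_A$-homogeneity of $\mathbb{B}_2$ (which the paper had already established in Lemma~\ref{basisA}) and make the inverse map $\overline{\mathbb{B}}_1\to\mathbb{B}_1$ explicit, whereas the paper simply appeals to the construction of $\mathbb{B}_1$; both are minor expansions of the same argument.
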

\begin{proof}
The elements in $  {\mathbb B}_2\cup   {\mathbb B}_3$ are $\deg_{A}$-homogenous
and hence one has for $  b\in   {\mathbb B}_i$, $i=2,3$:
$\textrm{in}_\nu   b=  b\in \overline{\mathbb B}_i$, so $\textrm{in}_\nu$ is the identity map on 
 ${\mathbb B}_2=\overline{\mathbb B}_2$ and $ {\mathbb B}_3=\overline{\mathbb B}_3$.

Given $  b\in  {\mathbb B}_1$, say  
$  b=x^\alpha-\mathbf f_{m,\eta}$, its inital term 
is by \eqref{initial:calculation}: $\textrm{in}_\nu   b=x^\alpha\in \overline{\mathbb B}_1$. 
By the construction of ${\mathbb B}_1$, the map $\nu:{\mathbb B}_1\rightarrow \overline{\mathbb B}_1$ is a bijection.
\end{proof}
\begin{lemma}\label{initial:tilde:theta:equals:bar:theta}
We have $\mathrm{in}_\nu(\ker\theta)=\ker\bar\theta$.
\end{lemma}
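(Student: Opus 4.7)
The plan is to use the bases $\mathbb{B}_1\cup\mathbb{B}_2$ of $\ker\theta$ (Lemma~\ref{ker:theta:basis}) and $\overline{\mathbb{B}}_1\cup\overline{\mathbb{B}}_2$ of $\ker\bar\theta$ (Lemma~\ref{basisA}), together with the bijection $\mathrm{in}_\nu:\mathbb{B}\to\overline{\mathbb{B}}$ of Lemma~\ref{in:bijection}. The key structural input is that an element $b\in\mathbb{B}_2$ is $\deg_A$-homogeneous, while an element $b=x^\alpha-\mathbf{f}_{m,\eta}\in\mathbb{B}_1$ consists of two monomials with strictly different $\deg_A$ by \eqref{initial:calculation}. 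Both inclusions will then reduce to a tracking of top-degree monomials in the basis expansion.

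For $\mathrm{in}_\nu(\ker\theta)\subseteq\ker\bar\theta$, I would expand $g\in\ker\theta$ as $g=\sum_i c_i b_i$ with $b_i\in\mathbb{B}_1\cup\mathbb{B}_2$, and set $\lambda_0:=\max\{\deg_A(\mathrm{in}_\nu b_i)\mid c_i\neq 0\}$. Since the bottom correction $-\mathbf{f}_{m_i,\eta_i}$ appearing in each $\mathbb{B}_1$-element has $\deg_A$ strictly below its top monomial $\mathrm{in}_\nu b_i$, the only monomials of $g$ at $\deg_A=\lambda_0$ come from the top parts of those $b_i$ with $\deg_A(\mathrm{in}_\nu b_i)=\lambda_0$. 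Hence
$$
\mathrm{in}_\nu g \;=\; \sum_{i:\,\deg_A(\mathrm{in}_\nu b_i)=\lambda_0} c_i\,\mathrm{in}_\nu b_i,
$$
which is a linear combination of basis elements of $\ker\bar\theta$ by Lemma~\ref{basisA}, and thus lies in $\ker\bar\theta$; taking the ideal generated by such initial terms gives the inclusion.

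For the reverse inclusion $\ker\bar\theta\subseteq\mathrm{in}_\nu(\ker\theta)$, I would first observe that $\overline{\mathbb{B}}_1\cup\overline{\mathbb{B}}_2$ is a $\deg_A$-homogeneous basis, so $\ker\bar\theta$ is $\deg_A$-graded and it suffices to exhibit $\bar g=\mathrm{in}_\nu g$ for some $g\in\ker\theta$ when $\bar g\in\ker\bar\theta$ is homogeneous of some degree $\lambda_0$. Writing $\bar g=\sum_i c_i\bar b_i$ with all $\bar b_i\in\overline{\mathbb{B}}_1\cup\overline{\mathbb{B}}_2$ of $\deg_A=\lambda_0$, I would lift each $\bar b_i$ to its preimage $b_i\in\mathbb{B}_1\cup\mathbb{B}_2$ via Lemma~\ref{in:bijection} and set $g:=\sum_i c_i b_i\in\ker\theta$. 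The very same accounting gives $g=\bar g-\sum_{b_i\in\mathbb{B}_1} c_i\mathbf{f}_{m_i,\eta_i}$, where every correction term has $\deg_A\prec\lambda_0$, so $\mathrm{in}_\nu g=\bar g$.

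The subtlety to verify carefully is that in rewriting $g$ as a polynomial, ``lower'' monomials $\mathbf{f}_{m_i,\eta_i}$ produced by different $\mathbb{B}_1$-basis elements cannot combine to yield a contribution of $\deg_A=\lambda_0$ that would disturb the top-degree identification. This is precisely the strict inequality in~\eqref{initial:calculation}, which in turn rests on Lemma~\ref{val:deg} recasting minimality of a monomial as equality in the quasi-additive bound \eqref{THE:INEQUALITY}. Once this point is isolated, both inclusions are pure linear-algebraic bookkeeping on the two distinguished bases.
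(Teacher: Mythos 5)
Your proof is correct and follows essentially the same route as the paper's: both inclusions are extracted from the compatible bases $\mathbb B$ and $\overline{\mathbb B}$ of Lemmas~\ref{ker:theta:basis} and \ref{basisA}, the initial-term bijection of Lemma~\ref{in:bijection}, and the strict degree drop in \eqref{initial:calculation}. The only cosmetic difference is that for $\ker\bar\theta\subseteq\mathrm{in}_\nu(\ker\theta)$ the paper simply invokes Lemma~\ref{in:bijection} (each basis element $\bar b\in\overline{\mathbb B}_1\cup\overline{\mathbb B}_2$ equals $\mathrm{in}_\nu b$ for its preimage $b\in\mathbb B_1\cup\mathbb B_2\subseteq\ker\theta$), whereas you re-derive this by lifting a $\deg_A$-homogeneous element and verifying the degree accounting — which is exactly what the proof of Lemma~\ref{in:bijection} already established.
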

\begin{proof}
By Lemma~\ref{in:bijection}, one has $\ker\bar\theta\subseteq \text{in}_\nu(\ker\theta)$.
Let $g\in \ker\theta$, we write $g=\sum_{j\in \mathcal S} c_j   b_j$ 
as a linear combination  of elements $ b_j\in   {\mathbb B}_1\cup   {\mathbb B}_2$, where $\mathcal S$ is some finite indexing set. 
Since the $\textrm{in}_\nu b$, $b\in {\mathbb B}$, are lineary independent,
one has $\textrm{in}_\nu g= \textrm{in}_\nu(\sum_{j\in \mathcal S} c_j \textrm{in}_\nu  b_j)=
\sum_{j\in \mathcal S'} c_j \textrm{in}_\nu  b_j$, where $\mathcal S'\subseteq \mathcal S$
is the subset of indices such that $c_j\not=0$ and $\textrm{in}_\nu  b_j$ is 
of maximal $\deg_A$-degree. In particular, $\textrm{in}_\nu g \in \ker\bar\theta$,
and hence $\text{in}_\nu(\ker\theta)=\ker\bar\theta$.
\end{proof}
\subsection{An approximation by a weight function}\label{weight:function}
As in the case of monomial orders,  global monomial preorders can be 
approximated by integral weight orders, see, for example, \cite{KTv}, \cite{KR} and references therein.

For $\alpha,\beta\in\mathbb Z^p$ let $\alpha\cdot \beta=\sum_{i=1}^p \alpha_i\beta_i$.
For $\lambda\in  \mathbb Z^p$ let ``$\succ_\lambda$'' be the corresponding integral weight order on $\mathbb K[\underline x]$
defined by $x^\alpha\succeq_\lambda x^\beta$ if $\lambda\cdot \alpha\ge \lambda\cdot \beta$.
The initial term $\textrm{in}_\lambda(g)$ of a nonzero polynomial and the initial ideal $\textrm{in}_\lambda I$
are defined as in Definition~\ref{defn:initial:term}: $\textrm{in}_\lambda(g)$  is the  sum
of the greatest nonzero terms of $g$ with respect to the weight order ``$\succ_\lambda$'', 
and $\textrm{in}_\lambda I$ is the ideal generated by the elements $\textrm{in}_\nu g$, $g\in I$.
Note if the ideal $I$ is $\deg_{\underline m}$-homogeneous, then so is the ideal $\textrm{in}_\lambda I$.
The following theorem holds for monomial preorders, we formulate it
here just for the monomial preorder induced by the quasi-valuation $\nu$.

\begin{theorem}\cite[Theorem 3.2]{KTv}\label{aproximation:theorem}
There exists an integral vector $\lambda=(\lambda_1,\ldots,\lambda_p)\in\mathbb Z^p$
such that $\mathrm{in}_\nu(\ker\theta)=\mathrm{in}_\lambda(\ker\theta)$.
\end{theorem}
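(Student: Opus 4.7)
The plan is to deduce this from the general approximation theorem for matrix monomial preorders proved in \cite{KTv}, so the main task is to verify that the preorder $\succ_A$ of Definition~\ref{degAgrading} fits into a matrix-order framework with rational (hence, after clearing denominators, integer) entries. Fix the linearization $>^t$ of $A$ and list its elements as $\sigma_1 >^t \sigma_2 >^t \cdots >^t \sigma_s$ with $s=|A|$. Let $M$ be the $(s+1)\times p$ matrix whose first row is $(m_1,\ldots,m_p)$ and whose $(j+1)$-th row records the $\sigma_j$-coordinate of $\nu(f_{m_i,\chi_i})$ for $i=1,\ldots,p$. Unwinding Definition~\ref{degAgrading}, one has $x^\alpha\succ_A x^\beta$ if and only if $M\alpha >_{\mathrm{lex}} M\beta$. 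After multiplying the bottom $s$ rows by a common denominator of all entries of $\nu(f_{m_i,\chi_i})$, we may assume $M\in\mathbb{Z}^{(s+1)\times p}$ without changing the preorder.

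Next I would show that $\mathrm{in}_\nu(\ker\theta)$ depends only on finitely many monomial comparisons. Indeed, the ideal $\ker\theta$ admits a finite Gröbner basis with respect to $\succ_A$ (this is the content of \cite{KTv}, where the usual Buchberger machinery is adapted to global monomial preorders), and on such a finite generating set the initial ideal is determined by a finite collection of pairwise comparisons $M(\alpha-\beta) >_{\mathrm{lex}} 0$. Alternatively, one can use the basis $\mathbb{B}=\mathbb{B}_1\cup\mathbb{B}_2\cup\mathbb{B}_3$ built in Lemma~\ref{ker:theta:basis} together with Lemma~\ref{in:bijection} as a ``universal'' generating set whose leading terms are already pinned down.

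The standard trick is then to collapse the rows of $M$ into a single integer weight vector. For any finite family $v_1,\ldots,v_N\in\mathbb{Z}^p$ with $M v_j >_{\mathrm{lex}} 0$, choose a positive integer $N_0$ that strictly dominates the entries arising from the lower rows (concretely, $N_0$ larger than any ratio $|M_k\cdot v_j|/|M_{k'}\cdot v_j|$ needed to break ties). Set
\[
\lambda \;=\; N_0^{s}\, M_1 + N_0^{s-1}\, M_2 + \cdots + N_0\, M_{s} + M_{s+1}\;\in\;\mathbb{Z}^p,
\]
where $M_k$ denotes the $k$-th row of $M$. Then $\lambda\cdot v_j>0$ for all $j$, which means $\succ_\lambda$ agrees with $\succ_A$ on all the relevant monomial comparisons. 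Applying this to the finite set of comparisons detecting $\mathrm{in}_\nu(\ker\theta)$ yields $\mathrm{in}_\lambda(\ker\theta)=\mathrm{in}_\nu(\ker\theta)$.

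The main obstacle is the standard one in this circle of ideas: one must verify that agreeing on a finite Gröbner-type generating set is enough to force equality of initial ideals for the whole ideal. This is precisely what is handled in \cite[Theorem 3.2]{KTv} by checking that any element of $\ker\theta$ reduces to zero against the chosen generators in both orderings. Given the explicit basis $\mathbb{B}$ of Lemma~\ref{ker:theta:basis} and the bijection $b\mapsto\mathrm{in}_\nu b$ of Lemma~\ref{in:bijection}, the verification is essentially direct, and the theorem follows.
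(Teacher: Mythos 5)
The paper does not prove this theorem; it is invoked directly as a specialization of \cite[Theorem~3.2]{KTv}, and the paper's role is only to verify that $\succ_A$ is a global monomial preorder (done in Section~\ref{filtration:and:basis}) so that the cited result applies. Your reconstruction of the argument behind that reference is sound in outline --- realize the preorder by a matrix lexicographic comparison, reduce to finitely many comparisons via a finite standard basis, collapse the matrix to a single integer weight vector --- and this is indeed the strategy in \cite{KTv}. One sign subtlety: with the total order $\preceq$ on $\mathbb N\times\mathbb Q^A_{\ge 0}$ as defined in Section~\ref{filtration:and:basis}, when the $\deg_{\underline m}$-degrees agree, $(m,\underline a)\succeq(m,\underline a')$ means $\underline a\le^t\underline a'$; so the lower rows of your matrix $M$ must be negated before the test $M\alpha>_{\mathrm{lex}}M\beta$ reproduces $x^\alpha\succ_A x^\beta$ (and correspondingly the lower rows should enter the weight vector $\lambda$ with a minus sign). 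This is cosmetic and does not affect the conclusion after clearing denominators. The genuinely nontrivial steps --- existence of a finite standard basis with respect to a global monomial preorder, and the fact that matching the preorder on that finite set of leading-term comparisons forces $\mathrm{in}_\nu(\ker\theta)=\mathrm{in}_\lambda(\ker\theta)$ --- you correctly defer to \cite{KTv}, which is exactly the position the paper itself takes by stating the result as a citation rather than re-proving it.
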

This integral vector $\lambda$ can be used to define a linear $\mathbb G_m$-action on $\mathbb K^p$:
$s\cdot (\sum_{i=1}^p c_ie_i) = \sum_{i=1}^p s^{\lambda_i}c_ie_i$ for $s\in \mathbb K^*$.
For the corresponding  $\mathbb G_m$-action on $\mathbb K[\underline x]$ by algebra homomorphisms 
we have for a monomial: $s\cdot x^\alpha=s^{-\lambda\cdot\alpha}x^{\alpha}$. 
The $\mathbb G_m$-action on $\mathbb K^p$ commutes with the grading action of $\texttt{Gr}_{\underline m}$  on $\mathbb K^p$; 
so we get an induced action on $\mathbb P(m_1,\ldots,m_r)$. 

For a non-zero polynomial $f=\sum_{\alpha} c_\alpha x^\alpha\in \mathbb K[\underline x]$ set 
$\deg_\lambda f=\max\{\lambda\cdot \alpha\mid c_\alpha\not=0\}$. 
Let $\mathcal S$ be the finite index set of $\alpha$ such that $c_\alpha\not=0$, and set
$\mathcal S'=\{\alpha\in \mathcal S\mid \lambda\cdot \alpha<\deg_\lambda f\}$.
We get
\begin{equation}\label{normalized:Gm:action}
s^{\deg_\lambda f}(s\cdot f)=\textrm{in}_\lambda f + \sum_{\alpha\in \mathcal S'} s^{\deg_\lambda f-\lambda\cdot \alpha}x^\alpha.
\end{equation}
By the definition of an initial ideal with respect to the weight order ``$\succ_\lambda$''  
it follows hence that such an ideal is generated by $\mathbb G_m$-eigenfunctions.
In particular, the ideal  $\ker\bar\theta=\text{in}_\nu(\ker\theta)=\textrm{in}_\lambda(\ker\theta)$ 
(Lemma~\ref{initial:tilde:theta:equals:bar:theta}, Theorem~\ref{aproximation:theorem})
is  generated by $\mathbb G_m$-eigenfunctions and hence:
\begin{lemma}
$X_0\subseteq \mathbb P(m_1,\ldots,m_r)$ is a $\mathbb G_m$-stable subvariety.
\end{lemma}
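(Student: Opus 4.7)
The plan is straightforward: the lemma is essentially a bookkeeping consequence of the identification of $\ker\bar\theta$ with an initial ideal under an integral weight order, so the proof amounts to verifying that generators of this ideal are $\mathbb{G}_m$-eigenfunctions and that ideal-stability translates to stability of the weighted algebraic set.

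First I would invoke Lemma~\ref{initial:tilde:theta:equals:bar:theta} together with Theorem~\ref{aproximation:theorem} to identify the defining ideal of $X_0$ as $\ker\bar\theta=\mathrm{in}_\nu(\ker\theta)=\mathrm{in}_\lambda(\ker\theta)$. Thus $X_0$ is cut out inside $\mathbb{P}(m_1,\ldots,m_r)$ by the ideal $\mathrm{in}_\lambda(\ker\theta)$, which by definition is generated by the initial terms $\mathrm{in}_\lambda g$ for $g\in\ker\theta$.

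Next I would verify that each such generator is a $\mathbb{G}_m$-eigenfunction. By definition, $\mathrm{in}_\lambda g$ is the sum of those terms $c_\alpha x^\alpha$ of $g$ with $\lambda\cdot\alpha=\deg_\lambda g$. Since the $\mathbb{G}_m$-action on monomials is $s\cdot x^\alpha=s^{-\lambda\cdot\alpha}x^\alpha$, every term of $\mathrm{in}_\lambda g$ scales by the common factor $s^{-\deg_\lambda g}$, giving
\[
s\cdot\mathrm{in}_\lambda g \;=\; s^{-\deg_\lambda g}\,\mathrm{in}_\lambda g.
\]
Formula~\eqref{normalized:Gm:action} makes this explicit: the left-hand side equals $\mathrm{in}_\lambda g$ plus a sum of strictly positive powers of $s$, but restricted to $\mathrm{in}_\lambda g$ itself the error terms vanish.

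Finally, since the generators are simultaneously $\deg_{\underline m}$-homogeneous (Section~\ref{filtration:and:basis}) and $\mathbb{G}_m$-eigenfunctions, the ideal $\ker\bar\theta$ is stable under the $\mathbb{G}_m$-action on $\mathbb{K}[\underline x]$. Because the $\mathbb{G}_m$-action on $\mathbb{K}^p$ commutes with the grading action of $\texttt{Gr}_{\underline m}$ (recalled in Section~\ref{Sec:5.1}), it descends to an action on $\mathbb{P}(m_1,\ldots,m_r)$, and stability of the defining bi-homogeneous ideal yields $\mathbb{G}_m$-stability of the weighted algebraic set $X_0$. There is essentially no obstacle here beyond this assembly; the only subtle point is the compatibility of the $\mathbb{G}_m$- and $\texttt{Gr}_{\underline m}$-actions, and this has already been established prior to the lemma.
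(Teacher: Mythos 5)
Your proof is correct and follows essentially the same route as the paper: identify $\ker\bar\theta$ with $\mathrm{in}_\lambda(\ker\theta)$ via Lemma~\ref{initial:tilde:theta:equals:bar:theta} and Theorem~\ref{aproximation:theorem}, observe from \eqref{normalized:Gm:action} that the generators $\mathrm{in}_\lambda g$ are $\mathbb G_m$-eigenfunctions, and conclude that the ideal (and hence $X_0$) is $\mathbb G_m$-stable. You simply spell out the final translation from ideal stability to stability of the weighted algebraic set more explicitly than the paper does.
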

For a subvariety $Y\subseteq  \mathbb P(m_1,\ldots,m_r)$ denote by $I(Y)$ the 
$\deg_{\underline m}$-homogeneous vanishing ideal, and for a $\deg_{\underline m}$-homogeneous ideal $I\subseteq \mathbb K[\underline x]$
let $V(I)\subseteq \mathbb P(m_1,\ldots,m_r)$ be the vanishing set of the ideal.
For $s\in\mathbb G_m$ we have 
$I(s\cdot Y)=s\cdot I(Y)$ and $s\cdot V(I)=V(s\cdot I)$.

Let $I$ be a $\deg_{\underline m}$-homogeneous ideal.  The ideal $\lim_{s\rightarrow 0} s\cdot I$ is the 
ideal in $\mathbb K[\underline x]$ generated by the limit of the rescaled function $\lim_{s\rightarrow 0} s^{\deg_\lambda f}(s\cdot f)$, $f\in I$.
Equation \eqref{normalized:Gm:action} implies: $\lim_{s\rightarrow 0} s\cdot I=\textrm{in}_\lambda I$.
This ideal is again $\deg_{\underline m}$-homogeneous
and hence we have  by Lemma~\ref{initial:tilde:theta:equals:bar:theta} and Theorem~\ref{aproximation:theorem}:  
\begin{equation}
\lim_{s\rightarrow 0} s\cdot \ker\theta=\textrm{in}_\lambda \ker\theta=\textrm{in}_\nu \ker\theta = \ker\bar\theta.
\end{equation}
\begin{definition}\label{def:toric:degeneration}
For a weighted projective subvariety $Y\subseteq  \mathbb P(m_1,\ldots,m_r)$ denote by $I(Y)$ the 
$\deg_{\underline m}$-homogeneous vanishing ideal.
We say that the weighted algebraic set $Y_0\subseteq  \mathbb P(m_1,\ldots,m_r)$ is \emph{a toric degeneration of $Y$ inside
$\mathbb P(m_1,\ldots,m_r)$} and write
$\lim_{s\rightarrow 0} s\cdot Y=Y_0$, if $Y_0=V( \textrm{in}_\lambda I(Y))$.
\end{definition}

Summarizing we have for $X_P=V(\ker\theta)$ and 
$X_0=V(\ker\bar\theta)=V(\textrm{in}_\lambda(\ker\theta))$:
\begin{theorem}
The variety $X_0\subseteq  \mathbb P(m_1,\ldots,m_r)$ is a toric degeneration of $X_P$ inside
the weighted projectice space $\mathbb P(m_1,\ldots,m_r)$: $\lim_{s\rightarrow 0} s\cdot X_P=X_0$.
\end{theorem}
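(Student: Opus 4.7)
The plan is essentially to read off the result from the chain of equalities established in the preceding subsections, once we pin down that $\ker\theta$ equals the $\deg_{\underline m}$-homogeneous vanishing ideal $I(X_P)$ required by Definition~\ref{def:toric:degeneration}.

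First, I would verify that $\ker\theta = I(X_P)$. The morphism $\theta:\mathbb K[\underline x]\to \mathbb K[\hat X_P]$ is the comorphism of the closed embedding $\Theta:\hat X_P\hookrightarrow V\oplus U$ from \eqref{new:embedding}, so $\ker\theta$ is the affine vanishing ideal of the embedded affine quasi-cone; since $\mathbb K[\hat X_P]$ is reduced, $\ker\theta$ is radical, and since every generator $x_i$ is a $\hat T$-eigenfunction and hence $\deg_{\underline m}$-homogeneous, the ideal $\ker\theta$ is $\deg_{\underline m}$-homogeneous. Therefore $\ker\theta = I(X_P)$ in the sense of Section~\ref{Sec:5.1}. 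Likewise, by construction $X_0 = V(\ker\bar\theta)$ inside $\mathbb P(m_1,\ldots,m_r)$.

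Next, I would concatenate the two key results of the previous subsection. Lemma~\ref{initial:tilde:theta:equals:bar:theta} gives $\mathrm{in}_\nu(\ker\theta) = \ker\bar\theta$, and Theorem~\ref{aproximation:theorem} produces an integral weight vector $\lambda\in\mathbb Z^p$ with $\mathrm{in}_\lambda(\ker\theta) = \mathrm{in}_\nu(\ker\theta)$. Combined, this yields
\[
\mathrm{in}_\lambda\bigl(I(X_P)\bigr) \;=\; \mathrm{in}_\lambda(\ker\theta) \;=\; \ker\bar\theta.
\]
The computation in \eqref{normalized:Gm:action} shows that for the $\mathbb G_m$-action on $\mathbb K^p$ attached to $\lambda$, rescaled limits $\lim_{s\to 0} s^{\deg_\lambda f}(s\cdot f)$ of elements $f\in\ker\theta$ coincide with their $\lambda$-initial terms, so the ideal $\lim_{s\to 0} s\cdot\ker\theta$ in the sense of Definition~\ref{def:toric:degeneration} is exactly $\mathrm{in}_\lambda(\ker\theta)$.

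Putting these pieces together, Definition~\ref{def:toric:degeneration} applied to $Y = X_P$ yields
\[
\lim_{s\to 0} s\cdot X_P \;=\; V\bigl(\mathrm{in}_\lambda I(X_P)\bigr) \;=\; V(\ker\bar\theta) \;=\; X_0,
\]
which is the claim. There is no real obstacle here: the whole statement is a bookkeeping consequence of Lemma~\ref{initial:tilde:theta:equals:bar:theta}, Theorem~\ref{aproximation:theorem}, and \eqref{normalized:Gm:action}. The only subtlety is the identification $\ker\theta = I(X_P)$, and noting that both $X_P$ and $X_0$ are cut out in $\mathbb P(m_1,\ldots,m_r)$ by radical, $\deg_{\underline m}$-homogeneous ideals so that passing between ideals and vanishing sets is well-behaved.
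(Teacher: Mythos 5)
Your proof is correct and follows essentially the same route the paper takes: identifying $\ker\theta$ and $\ker\bar\theta$ as the radical $\deg_{\underline m}$-homogeneous defining ideals of $X_P$ and $X_0$, then chaining Lemma~\ref{initial:tilde:theta:equals:bar:theta}, Theorem~\ref{aproximation:theorem}, and equation~\eqref{normalized:Gm:action} to conclude $\lim_{s\to 0}s\cdot\ker\theta = \mathrm{in}_\lambda\ker\theta = \ker\bar\theta$. The paper states the theorem as a one-line "Summarizing" after exactly this chain, so your bookkeeping reconstructs the argument faithfully.
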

\subsection{Homogenization and a flat degeneration}\label{homogenizatíon}
A more formal way to look at the results in Section~\ref{weight:function} is to use the $\lambda$-homogenization of an ideal 
(see, for example, \cite[Section~15.8]{E} or \cite[Section~4.3]{KR}). We have the affine quasi-cone $\hat X_P\subseteq \mathbb K^p$
embedded in $\mathbb K^p$, together with a $\mathbb G_m$-action on $\mathbb K^p$. 
We add a variable $u$ and extend the 
action of $\mathbb G_m$ to $\mathbb K^p\oplus\mathbb K$ by 
$s\cdot (\sum_{i=1}^p c_ie_i, c) = (\sum_{i=1}^p s^{\lambda_i}c_ie_i,sc)$ for $s\in \mathbb K^*$. 

We extend the $\mathbb N$-grading to $\mathbb K[\mathbb K^p\oplus \mathbb K]=\mathbb K[\underline x,u]$
by setting $\deg_{\underline m} u=0$. And we extend the action of the grading
group $\mathtt{Gr}_{\underline m}\simeq \mathbb K^*$ to $\mathbb K^p\oplus \mathbb K$ by letting 
$\mathtt{Gr}_{\underline m}$ act 
trivially on $\mathbb K$.   
The action of $\mathbb G_m$ on $\mathbb K^p\oplus\mathbb K$ induces an action on 
$\mathbb K[\underline x,u]$ by algebra isomorphisms and, since the action preserves 
the $\deg_{\underline m}$-grading, we get an induced action on 
$\mathrm{Proj}_{\underline m}(\mathbb K[\underline x,u])$.

The inclusion $\mathbb K[u]\hookrightarrow \mathbb K[\underline x,u]$
induces a morphism $\pi: \mathrm{Proj}_{\underline m}(\mathbb K[\underline x,u]) \rightarrow \mathbb A^1$
which is $\mathbb G_m$-equivariant with respect to the  $\mathbb G_m$-action on 
$\mathrm{Proj}_{\underline m}(\mathbb K[\underline x,u])$ and the $\mathbb G_m$-action on 
$\mathbb K$ by multiplication.
\begin{definition}
For a polynomial $f=\sum_{\alpha} a_\alpha x^\alpha\in \mathbb K[\underline x]$  
set $\deg_\lambda f=\max\{\lambda\cdot \alpha\mid a_\alpha\not=0\}$.
We define a new function $\breve{f}\in \mathbb K[x_1,\ldots,x_p,u]$, called the \emph{$\lambda$-homogenization 
of $f$}:
\begin{equation}\label{procedure}
\breve{f}= u^{\deg_\lambda f}f(u^{-\lambda_1}x_1,\ldots,u^{-\lambda_p}x_p)\in \mathbb K[x_1,\ldots,x_p,u].
\end{equation}
 For a $\deg_{\underline m}$-homogeneous ideal 
$I\subseteq  \mathbb K[\underline x]$ denote by $\breve{I}\subseteq  \mathbb K[\underline x,u]$
the ideal generated by all the elements $\breve{f}$, $f\in I$. 
\end{definition}
For the $\mathbb G_m$-action we get: $s\cdot \breve{f}=s^{-\deg_\lambda f}f$, so
the function $\breve{f}$ is \emph{$\mathbb G_m$-homogeneous}.
Note that $\breve{f}=\textrm{in}_\lambda f +uh$, where $h\in \mathbb K[\underline x,u]$. 
Moreover, if $f$ is $\deg_{\underline m}$-homogeneous in  $\mathbb K[\underline x]$, 
then so is $\breve{f}$ in  $\mathbb K[\underline x,u]$.
We apply this homogenization procedure to the elements of the basis $\mathbb B$ of $\mathbb K[\underline x]$
to get $\breve{\mathbb B}=\{\breve{b}\mid b\in\mathbb B\}$.
It is easy to see:
\begin{lemma}
$\breve{\mathbb B}=\breve{\mathbb B}_1\cup \breve{\mathbb B}_2\cup \breve{\mathbb B}_3$ is
a basis of  $\mathbb K[\underline x,u]$ as a  $\mathbb K[u]$-module,
where $\breve{\mathbb B}_2={\mathbb B}_2$, $\breve{\mathbb B}_3={\mathbb B}_3$ and $\breve{\mathbb B}_1=\left \{ 
x^\alpha- u^{\ell_{m,\eta}}\mathbf f_{m,\eta}\left\vert\,  x^\alpha\in \overline{\mathbb B}_1, 
\theta(x^\alpha)= f_{m,\eta}\right.\right\}$, where $\ell_{m,\eta}=\lambda\cdot \alpha-\deg_\lambda \mathbf f_{m,\eta}>0$.
\end{lemma}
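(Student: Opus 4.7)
The plan is to prove the lemma in two stages: first I pin down the explicit form of $\breve b$ for each $b\in\mathbb B$, and then I derive the $\mathbb K[u]$-basis property from that shape, using a specialization at $u=0$.

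\textbf{Stage 1 (explicit form of the homogenizations).} For $b\in\mathbb B_3$ the element is a single monomial $x^\beta$, so \eqref{procedure} gives $\breve b=u^{\lambda\cdot\beta}\cdot u^{-\lambda\cdot\beta}x^\beta=b$ at once. For $b=x^\alpha-\mathbf f_{m,\eta}\in\mathbb B_1\cup\mathbb B_2$, writing $\mathbf f_{m,\eta}=x^\beta$, the computation reduces to comparing $\lambda\cdot\alpha$ and $\lambda\cdot\beta$. The key preliminary observation is the dichotomy
\begin{equation*}
x^\gamma\in\ker\bar\theta \iff x^\gamma\textrm{ is non-minimal},
\end{equation*}
which follows from $\bar\theta(x^\gamma)=\prod_i\bar f_{m_i,\chi_i}^{\gamma_i}$ and the multiplication rules of Definition~\ref{Defn:FanAlgebra}: this product vanishes exactly when the supports $\nu(f_{m_i,\chi_i})$ fail to lie in a common chain, i.e.\ exactly when $x^\gamma$ is non-minimal. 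Since $b\in\ker\theta$, Theorem~\ref{aproximation:theorem} together with Lemma~\ref{initial:tilde:theta:equals:bar:theta} places $\mathrm{in}_\lambda b$ in $\ker\bar\theta$, and the dichotomy rules out any case in which $\mathrm{in}_\lambda b$ would consist of a single minimal monomial. In the $\mathbb B_2$ situation both $x^\alpha$ and $x^\beta$ are minimal, so the only option left is $\mathrm{in}_\lambda b=b$; this forces $\lambda\cdot\alpha=\lambda\cdot\beta$ and hence $\breve b=b$. In the $\mathbb B_1$ situation $x^\alpha$ is non-minimal while $x^\beta$ is minimal, and both $\lambda\cdot\alpha=\lambda\cdot\beta$ (which would put $b\in\ker\bar\theta$ even though $\bar\theta(b)=-\bar f_{m,\eta}\ne 0$) and $\lambda\cdot\alpha<\lambda\cdot\beta$ (which would put $x^\beta\in\ker\bar\theta$ contrary to minimality) are impossible; hence $\lambda\cdot\alpha>\lambda\cdot\beta$ and $\breve b=x^\alpha-u^{\lambda\cdot\alpha-\lambda\cdot\beta}x^\beta$ with $\ell_{m,\eta}=\lambda\cdot\alpha-\deg_\lambda\mathbf f_{m,\eta}>0$, matching the stated formula.

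\textbf{Stage 2 ($\mathbb K[u]$-basis property).} For $\mathbb K[u]$-generation it suffices to write every monomial $x^\alpha\in\mathbb K[\underline x]$ as a $\mathbb K[u]$-combination of elements of $\breve{\mathbb B}$: if $x^\alpha$ is a fixed minimal lift then $x^\alpha\in\breve{\mathbb B}_3$; if $x^\alpha$ is minimal but not a fixed lift then $x^\alpha=(x^\alpha-\mathbf f_{m,\eta})+\mathbf f_{m,\eta}$ lies in $\breve{\mathbb B}_2\cup\breve{\mathbb B}_3$; and if $x^\alpha$ is non-minimal then $x^\alpha=\breve b+u^{\ell_{m,\eta}}\mathbf f_{m,\eta}$ uses $\breve{\mathbb B}_1\cup\breve{\mathbb B}_3$. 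For independence, specializing at $u=0$ fixes $\breve{\mathbb B}_2\cup\breve{\mathbb B}_3$ pointwise and sends each $\breve b=x^\alpha-u^{\ell_{m,\eta}}x^\beta\in\breve{\mathbb B}_1$ (with $\ell_{m,\eta}>0$) to $x^\alpha\in\overline{\mathbb B}_1$; thus $\breve{\mathbb B}|_{u=0}=\overline{\mathbb B}$, which is a $\mathbb K$-basis of $\mathbb K[\underline x]$ by Lemma~\ref{basisA}. Given any relation $\sum_b p_b(u)\breve b=0$ with some $p_b\ne 0$, dividing through by $u^{\min_b\mathrm{ord}_0 p_b}$ (using that $\mathbb K[\underline x,u]$ is a domain) and specializing at $u=0$ produces a non-trivial $\mathbb K$-linear relation among $\overline{\mathbb B}$-elements, which is impossible.

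The hard part is Stage~1: one has to use the approximation theorem to control $\mathrm{in}_\lambda b$ rather than just $\mathrm{in}_\nu b$, and then exploit the minimality/non-minimality dichotomy to pin down the three cases. Once the homogenizations have the stated form, Stage~2 is a routine generation-plus-specialization argument.
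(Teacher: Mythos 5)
Your proof is correct. The paper leaves this lemma unproved (``It is easy to see''), so there is nothing to compare against; your two-stage argument is the natural one, and I would only highlight the one genuinely delicate point, which you handle properly: the inequality $\lambda\cdot\alpha>\deg_\lambda\mathbf f_{m,\eta}$ for $x^\alpha-\mathbf f_{m,\eta}\in\mathbb B_1$ does \emph{not} follow directly from $\mathrm{in}_\nu(x^\alpha-\mathbf f_{m,\eta})=x^\alpha$ as in Lemma~\ref{in:bijection}, because Theorem~\ref{aproximation:theorem} only equates the initial \emph{ideals} $\mathrm{in}_\lambda(\ker\theta)$ and $\mathrm{in}_\nu(\ker\theta)$, not the $\lambda$- and $\nu$-initial forms of individual elements. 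Your detour --- combining $\mathrm{in}_\lambda b\in\mathrm{in}_\lambda(\ker\theta)=\ker\bar\theta$ with the observation that a monomial $x^\gamma$ lies in $\ker\bar\theta$ if and only if it is non-minimal (which is precisely the content of Lemma~\ref{val:deg} read through $\bar\theta$) --- is exactly the right way to exclude the two bad cases and pin down $\mathrm{in}_\lambda(x^\alpha-\mathbf f_{m,\eta})=x^\alpha$. Stage~2 (generation by the three-way case split on monomials, independence by dividing out $u^{\min\mathrm{ord}_0 p_b}$ and specializing at $u=0$ against Lemma~\ref{basisA}) is indeed routine and correct.
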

We apply the homogenization procedure to the ideal $J=\ker\theta$. Let $\mathfrak X_P\subseteq \mathbb K^p\oplus\mathbb K$
be the affine algebraic set obtained as the zero set $V(\breve{J})$ of the ideal $\breve{J}$.
Since $\breve{J}$ is generated by $\mathbb G_m$-eigenfunctions, $\mathfrak X_P$ is a 
$\mathbb G_m$-stable subset of $\mathbb K^p\oplus\mathbb K$, and we have a $\mathbb G_m$-equivariant
morphism $\tilde\pi: \mathfrak X_P\rightarrow \mathbb A^1$.
\begin{lemma}
$\mathfrak X_P\subseteq \mathbb K^p\oplus\mathbb K$ is an affine variety with coordinate ring 
$\mathbb K[\underline x,u]/\breve{J}$. The variety is stable under the action of the grading group $\texttt{Gr}_{\underline{m}}$.
The union $\breve{\mathbb B}_1\cup \breve{\mathbb B}_2$
is a basis for $\breve{J}$ as $\mathbb K[u]$-module, and $\mathbb K[\underline x,u]/\breve{J}$ is a free
$\mathbb K[u]$-module with basis the image of $\breve{\mathbb B}_3$.
\end{lemma}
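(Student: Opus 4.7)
The plan is to establish the four assertions in order: $\mathtt{Gr}_{\underline m}$-stability, the $\mathbb K[u]$-basis of $\breve J$, freeness of the quotient, and finally the integral-domain statement identifying the coordinate ring. The key technical step is the second one; everything else follows either formally or via a standard localization argument.

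First, $\mathtt{Gr}_{\underline m}$-stability is immediate: the grading group acts trivially on $u$, preserves the $\deg_{\underline m}$-grading on $\mathbb K[\underline x]$, and therefore commutes with the homogenization operator $f \mapsto \breve f$ of \eqref{procedure}. Since $J = \ker\theta$ is $\deg_{\underline m}$-homogeneous, so is $\breve J$.

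For the main claim, every element of $\breve{\mathbb B}_1 \cup \breve{\mathbb B}_2$ sits inside $\breve J$ by construction, so I need the reverse inclusion. I would leverage the $\mathbb G_m$-action: since $\breve J$ is generated by the $\mathbb G_m$-eigenfunctions $\breve f$, it is $\mathbb G_m$-stable and hence decomposes into weight spaces. Given $g \in \breve J$, reduce to $g$ being $\mathbb G_m$-homogeneous of some weight $w$, and write it uniquely as $g = \sum_{b \in \breve{\mathbb B}} p_b(u)\breve b$ using the $\mathbb K[u]$-basis supplied by the preceding lemma. Each $\breve b$ is a $\mathbb G_m$-eigenvector of some weight $w_b$, and $u$ has $\mathbb G_m$-weight $-1$; matching weights on both sides forces each nonzero $p_b(u)$ to be a single monomial $c_b\, u^{w_b - w}$ with $c_b \in \mathbb K$. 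Specializing $u = 1$ is an algebra homomorphism that sends $\breve f \mapsto f$ (from the definition of $\breve{\phantom{f}}$), and in particular sends $\breve J$ into $J$ and each $\breve b$ to the corresponding element of $\mathbb B$. Thus $g|_{u=1} = \sum_b c_b\, b$ lies in $J$; expressed in the basis $\mathbb B$ of $\mathbb K[\underline x]$ supplied by Lemma~\ref{ker:theta:basis}, this forces $c_b = 0$ for every $b \in \mathbb B_3$. Hence $p_b = 0$ for $b \in \breve{\mathbb B}_3$, so $g$ lies in the $\mathbb K[u]$-span of $\breve{\mathbb B}_1 \cup \breve{\mathbb B}_2$. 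Linear independence is inherited from the basis $\breve{\mathbb B}$, and freeness of $\mathbb K[\underline x, u]/\breve J$ over $\mathbb K[u]$ with basis the images of $\breve{\mathbb B}_3$ is the immediate quotient statement.

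To obtain irreducibility and identify the coordinate ring, note that freeness over $\mathbb K[u]$ implies torsion-freeness, so $u$ is a non-zero-divisor modulo $\breve J$; thus $\mathbb K[\underline x, u]/\breve J$ injects into its localization at $u$. Performing the invertible change of variables $y_i = u^{-\lambda_i} x_i$ on $\mathbb K[\underline x, u, u^{-1}]$, one computes $\breve f = u^{\deg_\lambda f}\, f(\underline y)$; since $u$ is a unit in the localization, the ideal $\breve J \cdot \mathbb K[\underline x, u, u^{-1}]$ is identified with $J \cdot \mathbb K[\underline y, u, u^{-1}]$, so the localization is isomorphic to $\mathbb K[\hat X_P] \otimes_{\mathbb K} \mathbb K[u, u^{-1}]$, a tensor product of integral domains over a field and hence itself a domain. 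Therefore $\mathbb K[\underline x, u]/\breve J$ is a domain, $\breve J$ is prime (in particular radical), and by the Nullstellensatz $I(\mathfrak X_P) = \sqrt{\breve J} = \breve J$, so $\mathfrak X_P$ is an irreducible affine variety with coordinate ring $\mathbb K[\underline x, u]/\breve J$.

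The main obstacle is the spanning half of the basis statement: one must exploit $\mathbb G_m$-equivariance rigidly enough that each coefficient $p_b(u)$ in the unique expansion collapses to a single monomial in $u$, which is what turns the naive substitution $u = 1$ into a sharp filter against $\breve{\mathbb B}_3$. Once this is available, both the freeness of the quotient and the integrality (via the localization trick above) are essentially formal.
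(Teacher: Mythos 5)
Your proof is correct, but it takes a genuinely different route from the paper's, essentially reversing the logical order of the steps. The paper first invokes the general theory of $\lambda$-homogenization: it cites a result from Kreuzer--Robbiano that primality of $J=\ker\theta$ implies primality of $\breve J$ (giving the coordinate-ring claim immediately), and another that $u$ is not a zero-divisor modulo $\breve J$; the freeness is then deduced by observing that the images of $\breve{\mathbb B}_3$ generate the quotient (because $\breve{\mathbb B}_1\cup\breve{\mathbb B}_2\subseteq\breve J$), and that any $\mathbb K[u]$-linear relation among them would, after dividing out powers of $u$ and setting $u=0$, produce a nontrivial relation among the images of $\overline{\mathbb B}_3$ in $\mathbb K[\underline x]/\ker\bar\theta$, contradicting Lemma~\ref{basisA}. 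You, by contrast, avoid the external citations entirely: you first establish the $\mathbb K[u]$-basis of $\breve J$ directly by exploiting the $\mathbb G_m$-grading (each $\breve b$ is a $\mathbb G_m$-eigenvector, granting the $\mathbb G_m$-homogeneity of the $\breve{\mathbb B}_2$ elements which the preceding lemma's assertion $\breve{\mathbb B}_2=\mathbb B_2$ encodes; weight-matching forces the coefficients $p_b(u)$ to be monomials, and the evaluation $u=1$ sends $\breve J$ into $J$, killing the $\breve{\mathbb B}_3$-components). Freeness then gives $u$ a non-zero-divisor for free, and primality is recovered by the localization isomorphism $\mathbb K[\underline x,u,u^{-1}]/\breve J\simeq\mathbb K[\hat X_P][u,u^{-1}]$ after the unit change of variables $x_i\mapsto u^{\lambda_i}y_i$. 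Both arguments are sound; yours is more self-contained and makes the $\mathbb G_m$-equivariance do the work that the paper delegates to the general commutative-algebra facts about homogenization, at the cost of a somewhat longer exposition.
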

\begin{proof}
The ideal $J=\ker\theta$ is a prime ideal, which implies by \cite[Proposition 4.3.10]{KR} that
 $\breve{J}$ is a prime ideal and hence $\mathfrak X_P\subseteq \mathbb K^p\oplus\mathbb K$ is an affine variety with coordinate ring 
$\mathbb K[\underline x,u]/\breve{J}$. The ideal  $J=\ker\theta$  is $\deg_{\underline m}$-homogeneous, hence
so is $\breve{J}$, and $\mathfrak X_P$ is thus stable under the action of the grading group $\texttt{Gr}_{\underline{m}}$.

The union $\breve{\mathbb B}_1\cup \breve{\mathbb B}_2$ is contained in $\breve{J}$ by construction, so 
the image of $\breve{\mathbb B}_3$ in $\mathbb K[\underline x,u]/\breve{J}$ is a generating system over the ring 
$\mathbb K[u]$. Since $J\subset \mathbb K[\underline x]$ is a proper prime ideal, one knows that $u$ is not a 
zero divisor in $\mathbb K[\underline x,u]/\breve{J}$ \cite[Proposition 4.3.5 e)]{KR}. So given a linear dependence 
relation between elements in the image of $\breve{\mathbb B}_3$ with coefficients in $\mathbb K[u]$, one may 
assume without loss of generality that at least one coefficient has a non-zero constant term. But this would give 
at $u=0$ a non-trivial linear dependence relation between the elements in $\overline{\mathbb B}_3$, which would 
be a contradiction. So the image of $\breve{\mathbb B}_3$ is a $\mathbb K[u]$-basis
for $\mathbb K[\underline x,u]/\breve{J}$.
\end{proof}

It follows that $\mathfrak X_P$ is an affine variety with coordinate ring $\mathbb K[\underline x,u]/\breve{J}$,
and  the $\mathbb G_m$-equivariant morphism $\tilde\pi: \mathfrak X_P\rightarrow \mathbb A^1$ is a flat morphism.
Since  $\breve{J}$ is $\deg_{\underline m}$-homogeneous,
we get for $\breve{\mathfrak X}_P=(\mathfrak X_P\setminus\{0\})/\texttt{Gr}_{\underline{m}}$
an induced morphism $\pi:\breve{\mathfrak X}_P\rightarrow \mathbb A^1$.

\begin{theorem}\label{proposition:degenerate}
\begin{itemize}
\item[{\it i)}] The morphism $\pi: \breve{\mathfrak X}_P\rightarrow \mathbb A^1$ is flat.
\item[{\it ii)}] The fibre over $0$ is isomorphic to $X_0$.
\item[{\it iii)}] $\pi$ is trivial over $\mathbb{A}^1\setminus\{0\}$ with fibre isomorphic to $X_P$.
\end{itemize}
\end{theorem}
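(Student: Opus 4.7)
The plan is to deduce all three parts from the preceding lemma, which gives $\mathbb K[\underline x,u]/\breve J$ as a free $\mathbb K[u]$-module with basis the image of $\breve{\mathbb B}_3$, combined with standard properties of the $\lambda$-homogenization.

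For part (i), I would observe that the elements of $\breve{\mathbb B}_3 = \mathbb B_3$ are monomials in $\underline x$ alone, and each is $\deg_{\underline m}$-homogeneous. Hence the $\mathbb K[u]$-basis splits along the $\deg_{\underline m}$-grading, so every graded component $(\mathbb K[\underline x,u]/\breve J)_d$ is a free $\mathbb K[u]$-module. Since $\breve J$ is $\deg_{\underline m}$-homogeneous, $\breve{\mathfrak X}_P = \mathrm{Proj}_{\underline m}(\mathbb K[\underline x,u]/\breve J)$ is well-defined, and flatness of each graded piece over $\mathbb K[u]$ implies that the structure morphism $\pi:\breve{\mathfrak X}_P\to \mathbb A^1$ is flat.

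For part (ii), the fibre over $0$ is $\mathrm{Proj}_{\underline m}\bigl((\mathbb K[\underline x,u]/\breve J)\otimes_{\mathbb K[u]}\mathbb K[u]/(u)\bigr)$. From the definition \eqref{procedure} one sees that $\breve f \equiv \mathrm{in}_\lambda f \pmod u$ for every $f\in \ker\theta$, and since $\breve J$ is generated by such $\breve f$, the specialisation $\breve J\big|_{u=0}$ coincides with $\mathrm{in}_\lambda(\ker\theta)$. By Theorem~\ref{aproximation:theorem} and Lemma~\ref{initial:tilde:theta:equals:bar:theta}, $\mathrm{in}_\lambda(\ker\theta)=\mathrm{in}_\nu(\ker\theta)=\ker\bar\theta$, so $\pi^{-1}(0) \simeq V(\ker\bar\theta) = X_0$.

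For part (iii), a direct calculation gives $\breve f\big|_{u=s}=s^{\deg_\lambda f}f(s^{-\lambda_1}x_1,\ldots,s^{-\lambda_p}x_p)$ for $s\in\mathbb K^*$. Thus the substitution $u\mapsto s$ identifies $\breve J\big|_{u=s}$ with the image of $J=\ker\theta$ under the $\deg_{\underline m}$-preserving automorphism $x_i\mapsto s^{-\lambda_i}x_i$ of $\mathbb K[\underline x]$, yielding $\pi^{-1}(s)\simeq V(J)=X_P$. To upgrade this pointwise description to global triviality over $\mathbb A^1\setminus\{0\}$, I would use the $\mathbb G_m$-action on $\mathbb K^p\oplus\mathbb K$: it commutes with the $\mathtt{Gr}_{\underline m}$-action, preserves $\mathfrak X_P$, descends to $\breve{\mathfrak X}_P$, and is equivariant with respect to $\pi$ and the multiplication action on $\mathbb A^1$. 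Transporting the fibre over $1$ to the fibre over $s$ by the action of $s\in\mathbb G_m$ produces the required trivialisation $\breve{\mathfrak X}_P\vert_{\mathbb A^1\setminus\{0\}} \simeq X_P\times(\mathbb A^1\setminus\{0\})$.

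The main technical point is part (i): one must check that free $\mathbb K[u]$-module structure on the graded ring transfers to flatness of $\mathrm{Proj}_{\underline m}$ over $\mathbb A^1$, i.e.\ that the $\deg_{\underline m}$-grading (used to form the weighted Proj) interacts compatibly with the $\mathbb K[u]$-module structure. This is essentially bookkeeping, but it is the place where one has to be careful because of the interplay between the two distinct $\mathbb K^*$-actions $\mathtt{Gr}_{\underline m}$ and $\mathbb G_m$ on $\mathbb K^p\oplus\mathbb K$; once the compatibility is verified, parts (ii) and (iii) reduce to the standard specialisation formulas for $\breve f$ recalled above.
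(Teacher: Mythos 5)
Your proposal is correct and follows essentially the same route as the paper: derive flatness from the freeness of $\mathbb K[\underline x,u]/\breve J$ over $\mathbb K[u]$ established in the preceding lemma, and identify the special and generic fibres via the specialisations of $\breve J$. The only difference is that where the paper simply cites \cite[Section~3]{KTv} for the two isomorphisms $\mathbb K[\underline x,u]/(\breve J,u)\simeq\mathbb K[\underline x]/\mathrm{in}_\lambda J$ and $(\mathbb K[\underline x,u]/\breve J)[u^{-1}]\simeq(\mathbb K[\underline x]/J)[u,u^{-1}]$, you re-derive them directly by reducing $\breve f$ mod $u$ (giving $\mathrm{in}_\lambda f$) and by substituting $u\mapsto s$ (giving the rescaling automorphism), and you use the $\mathbb G_m$-action to upgrade the pointwise fibre identification to a genuine trivialisation over $\mathbb A^1\setminus\{0\}$; these are precisely the arguments behind the cited facts, so the content is the same.
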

\begin{proof}
Since $\mathbb K[\underline x,u]/\breve{J}$ is a free module over $\mathbb K[u]$, it is in particular
a flat module, which implies by $\deg_{\underline m}$-homogeneity the first claim.
Part {\it ii)} and {\it iii)} follows by \cite[Section 3]{KTv}, where it has been shown (here applied to the case
$J=\ker\theta$):
$\mathbb K[\underline x,u]/(\breve{J},u)\simeq  \mathbb K[\underline x]/\textrm{in}_\lambda J$, 
and $ (\mathbb K[\underline x,u]/\breve{J})[u^{-1}]\simeq  (\mathbb K[\underline x]/J)[u,u^{-1}]$,
which finishes the proof.
\end{proof}

\section{The integral case}\label{Zhu}
In this section we assume the combinatorial Seshadri stratification $(X_\sigma,f_\sigma)_{\sigma\in A}$ 
arises from a situation as in Example~\ref{integral:case}, i.e. for all $\sigma\in A$: the extremal function 
$f_\sigma$ is of degree one, and the $\hat T$-weight  $\mu_\sigma$ of $f_\sigma$ is a lattice point in 
the relative interior of the face $\sigma$. The associated triangulation $\mathcal T$ has hence lattice points as vertices.

 In particular, for every maximal chain $\mathfrak C\subseteq A$
we have in the triangulation $\mathcal T$ a simplex $\Delta_\mathfrak C$ with lattice points as vertices, and hence 
a toric variety $X_{\Delta_\mathfrak C}\subseteq \mathbb P(\mathbb K^{\Lambda_\mathfrak C})$,
where $\Lambda_\mathfrak C=\Lambda\cap \Delta_\mathfrak C$. Via the inclusion $\mathbb K^{\Lambda_\mathfrak C}
\hookrightarrow V=\mathbb K^{\Lambda}$ we view the (not necessarily normal) toric varieties
 $X_{\Delta_\mathfrak C}$ as being embedded in $\mathbb P(V)$.
The following example shows a normal polytope with an integral triangulation having a non-normal simplex.

\begin{example}
Let $P\subseteq\mathbb{R}^3$ be the polytope with vertices $v_0=(0,0,0)$, $v_1=(6,0,0)$, $v_2=(0,6,0)$ and $v_3=(0,0,6)$. By \cite[Theorem 2.2.11]{CLS}, the polytope $P$ is normal.

We consider the triangulation of $P$ whose vertices are: the point $(2,1,1)$ and the barycenters of the proper faces.

The simplex $Q$ having vertices $v_0 = (0,0,0)$, $(2,1,1)$, $(0,2,2)$ and $(0,0,3)$ is not normal. We checked this using {\tt Macaulay2} with the following code:

\begin{verbatim}
A = transpose matrix {{0,0,0}, {2,1,1}, {0,2,2}, {0,0,3}}
Q = convexHull A
isNormal Q
\end{verbatim}

However, not all the simplices of the triangulation are non-normal; for example, the one with vertices $(0,0,6)$, $(0,3,3)$, $(2,2,2)$ and $(2,1,1)$ is normal.

Here is a picture of the polytope $P$ with the non-normal simplex $Q$ in orange.

\[
\begin{tikzpicture}

	\fill[lightgray!20] (6,0,0) -- (0,0,0) -- (0,6,0);
	\fill[lightgray!40] (0,6,0) -- (0,0,0) -- (0,0,6);
	\fill[lightgray!60] (6,0,0) -- (0,0,0) -- (0,0,6);

	\fill[orange!20, opacity = 0.7] (2,1,1) -- (0,0,0) -- (0,2,2);
	\fill[orange!40, opacity = 0.7] (0,0,3) -- (0,0,0) -- (0,2,2);
	\fill[orange!60, opacity = 0.7] (2,1,1) -- (0,0,0) -- (0,0,3);

	\draw[draw=black] (6,0,0) -- (0,6,0) -- (0,0,6) -- cycle;
	\draw[black, dashed] (0,6,0) -- (0,0,0) -- (6,0,0);
	\draw[black, dashed] (0,0,0) -- (0,0,6);

	\node at (6,0,0) [below right] {$v_1$};
	\node at (0,6,0) [above left] {$v_2$};
	\node at (0,0,6) [below left] {$v_3$};
	\node at (0,0,0) [left] {\textcolor{gray}{$v_0$}};

	\draw[black] (3,0,0) circle (2pt); 
	\draw[black] (0,3,0) circle (2pt);
	\draw[black] (0,0,3) circle (2pt);

	\draw[black] (0,2,2) circle (2pt);
	\draw[black] (2,0,2) circle (2pt);
	\draw[black] (0,0,0) circle (2pt);
	\draw[black] (2,2,0) circle (2pt);

	\draw[black] (2,1,1) circle (2pt);

	\filldraw[black] (2,2,2) circle (2pt);
	\filldraw[black] (3,0,3) circle (2pt);
	\filldraw[black] (3,3,0) circle (2pt);
	\filldraw[black] (0,3,3) circle (2pt);
	\filldraw[black] (6,0,0) circle (2pt);
	\filldraw[black] (0,6,0) circle (2pt);
	\filldraw[black] (0,0,6) circle (2pt);

	\draw[red] (2,1,1) -- (0,0,3) -- (0,2,2) -- cycle;
	\draw[red, dashed] (0,2,2) -- (0,0,0) -- (2,1,1);
	\draw[red, dashed] (0,0,0) -- (0,0,3);


\end{tikzpicture}
\]
\end{example}
 
\subsection{A shadow}
The subspace  $V\subseteq V\oplus U$ is stable 
with respect to the  $\hat T$-, the $\mathbb G_m$- and the $\mathtt{Gr}_{\underline m}$-action 
on $V\oplus U$ (see Section \ref{Sec:5.1}), and the projection $\hat \phi:V\oplus U\rightarrow V$ is equivariant with respect to these actions. 
Recall that $\mathtt{Gr}_{\underline m}$ 
acts on $V$ by scalar multiplication, so $\hat O=\{(v,u)\mid v\in V,\ u\in U,\ v\not=0\}$ is an open and dense subset of $V\oplus U$,
stable with respect to the actions by  $\hat T$-, $\mathbb G_m$- and $\mathtt{Gr}_{\underline m}$. 
We get hence a $\hat T$- and $\mathbb G_m$-equivariant  rational map 
$\phi:\mathbb P(m_1,\ldots,m_p)\dashrightarrow \mathbb P(V)$, which is well defined 
on the open and dense subset $O=\{[v,u]\in \mathbb P(m_1,\ldots,m_p)\mid v\not=0\}$.

We write  $X_P^V\subseteq \mathbb P(V)$ to emphasize the fixed embedding of $X_P$ and to 
not confuse the embedding with  $X_P\hookrightarrow  \mathbb P(m_1,\ldots,m_p)$. Via the rational morphism
$\phi$ we can see the family of varieties $\{s\cdot X^V_P\mid s\in \mathbb G_m\}\subseteq \mathbb P(V)$
as a shadow of the family $\{s\cdot X_P\mid s\in \mathbb G_m\}\subseteq O$.
From this point of view we may think of 
$$
\mathbb X_0:=\lim_{s\rightarrow 0} s\cdot X^V_P\subseteq \mathbb P(V)
$$
as a shadow of the limit $X_0=\lim_{s\rightarrow 0} s\cdot X_P$ in $\mathbb P(m_1,\ldots,m_p)$.

The embedding $ X^V_P\subseteq \mathbb P(V)$ (Definition~\ref{def:toric:variety}) induces
a surjective algebra homomorphism $\theta_V: \mathbb K[V]\rightarrow \mathbb K[\hat X_P]$.
We view $V=\mathbb K^\Lambda\simeq\mathbb K^r$ as a subspace of $V\oplus U$ and write  $\mathbb K[x_1,\ldots,x_r]$ for $\mathbb K[V]$. The ideal $\ker\theta_V$ is the  vanishing ideal for $\hat X^V_P\subseteq \mathbb P(V)$, it is equal to
the intersection $\ker \theta\cap \mathbb K[x_1,\ldots,x_r]$.

The  integral vector $\lambda=(\lambda_1,\ldots,\lambda_p)\in\mathbb Z^p$ (Theorem~\ref{aproximation:theorem}) 
induces also a weight order
on $\mathbb K[x_1,\ldots,x_r]$. Let $\lambda^\dagger=(\lambda_1,\ldots,\lambda_r)\in\mathbb Z^r$ be the truncated 
vector, we define for $\alpha,\beta\in \mathbb Z^r$: $x^\alpha\succeq_{\lambda^\dagger} x^\beta$
if and only if $\lambda^\dagger\cdot \alpha\ge \lambda^\dagger\cdot \beta$. Note that 
$\lambda^\dagger\cdot \alpha=\lambda\cdot \alpha$ for all 
$x^\alpha\in \mathbb K[x_1,\ldots,x_r]\subseteq \mathbb K[x_1,\ldots,x_p]$.
Being translated into ideals, the task is to study $\lim_{s\rightarrow 0} s\cdot \ker\theta_V
= \textrm{in}_{\lambda^\dagger}\ker\theta_V$. Theorem~\ref{shadow:theorem} below
recovers for this special situation a result by Zhu \cite{Zhu}:

\begin{theorem}\label{shadow:theorem}
\begin{enumerate}
\item[{\rm i)}] We have $\sqrt{\mathrm{in}_{\lambda^\dagger} \ker\theta_V}=\ker\bar\theta\cap \mathbb K[x_1,\ldots,x_r]$.
\item[{\rm ii)}]  If we consider the toric degeneration of $X_P$ induced by the $\mathbb G_m$-action on $\mathbb P(V)$, then 
we get $\mathbb X_0=\lim_{s\rightarrow 0} s\cdot X_P=\bigcup X_{\Delta_\mathfrak C}$, where the union runs over
all maximal chains $\mathfrak C$ in $A$.
\end{enumerate}
\end{theorem}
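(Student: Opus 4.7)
The plan is to first establish (ii) via a flatness-and-degree argument, and then to deduce (i) by taking vanishing ideals. The easy inclusion $\mathrm{in}_{\lambda^\dagger}\ker\theta_V\subseteq\ker\bar\theta\cap\mathbb K[x_1,\ldots,x_r]$ is immediate: for $f\in\ker\theta_V=\ker\theta\cap\mathbb K[x_1,\ldots,x_r]$ the initial term $\mathrm{in}_{\lambda^\dagger}f$ agrees with $\mathrm{in}_\lambda f$, which lies in $\mathrm{in}_\lambda\ker\theta=\ker\bar\theta$ by Theorem~\ref{aproximation:theorem} and Lemma~\ref{initial:tilde:theta:equals:bar:theta}, and of course in $\mathbb K[x_1,\ldots,x_r]$. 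Since $\ker\bar\theta$ is radical (Theorem~\ref{fanAndDegeneratetheorem}), taking radicals already yields one direction of (i).

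Next I would identify $\ker\bar\theta\cap\mathbb K[x_1,\ldots,x_r]$ with $I\bigl(\bigcup_\mathfrak C X_{\Delta_\mathfrak C}\bigr)$. From Theorem~\ref{fanAndDegeneratetheorem} one has $\ker\bar\theta=\bigcap_\mathfrak C I(X_\mathfrak C)$, so it remains to compute $I(X_\mathfrak C)\cap\mathbb K[x_1,\ldots,x_r]$. In the integral case each extremal $f_\sigma$ is one of the linear coordinates $x_i$ with $i\le r$, so the degree-one generators $\bar\theta(x_\sigma)=y_{e_\sigma}$ of $\mathbb K[\Gamma_\mathfrak C]$, $\sigma\in\mathfrak C$, all lie in $\bar\theta(\mathbb K[x_1,\ldots,x_r])$. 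This makes $X_\mathfrak C\cap O$ open and dense in $X_\mathfrak C$, and $\phi$ restricts on it to the birational normalisation map $X_\mathfrak C\to X_{\Delta_\mathfrak C}$. For a $\deg_{\underline m}$-homogeneous $f\in\mathbb K[x_1,\ldots,x_r]$ of positive degree, $f$ vanishes on $X_\mathfrak C\setminus O$ automatically, so $f|_{X_\mathfrak C}=0$ iff $f|_{X_{\Delta_\mathfrak C}}=0$. Thus $I(X_\mathfrak C)\cap\mathbb K[x_1,\ldots,x_r]=I(X_{\Delta_\mathfrak C})$, and intersecting over $\mathfrak C$ gives $\ker\bar\theta\cap\mathbb K[x_1,\ldots,x_r]=I(\bigcup_\mathfrak C X_{\Delta_\mathfrak C})$. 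Combined with the easy direction of (i), this already yields $\mathbb X_0\supseteq\bigcup_\mathfrak C X_{\Delta_\mathfrak C}$.

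For the reverse inclusion in (ii), I invoke a degree count inside $\mathbb P(V)$. The Zariski closure of $\{(s,s\cdot x)\mid s\in\mathbb G_m,\,x\in X_P^V\}$ inside $\mathbb A^1\times\mathbb P(V)$ is flat over $\mathbb A^1$ with generic fibre $X_P^V$ and special fibre $\mathbb X_0$, so $\deg\mathbb X_0=\deg X_P^V=d!\,\mathrm{Vol}(P)$, where $d=\dim P$ and $\mathrm{Vol}$ is the normalised lattice volume (standard toric formula, using that $P$ is normal). By flatness $\mathbb X_0$ is equidimensional of dimension $d$, so each $X_{\Delta_\mathfrak C}$, being irreducible of dimension $d$ and already contained in $\mathbb X_0$, is one of its irreducible components. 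Writing the fundamental cycle $[\mathbb X_0]=\sum_\mathfrak C m_\mathfrak C[X_{\Delta_\mathfrak C}]+\sum_j m_j[Y_j]$ with $m_\mathfrak C\ge 1$ and using $\deg X_{\Delta_\mathfrak C}=d!\,\mathrm{Vol}(\Delta_\mathfrak C)$ together with the triangulation identity $\sum_\mathfrak C\mathrm{Vol}(\Delta_\mathfrak C)=\mathrm{Vol}(P)$, the inequality
$$
d!\,\mathrm{Vol}(P)=\deg\mathbb X_0=\sum_\mathfrak C m_\mathfrak C\,d!\,\mathrm{Vol}(\Delta_\mathfrak C)+\sum_j m_j\deg Y_j\ge d!\,\mathrm{Vol}(P)
$$
forces every $m_\mathfrak C=1$ and rules out extra components. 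Set-theoretically $\mathbb X_0=\bigcup_\mathfrak C X_{\Delta_\mathfrak C}$, which is (ii); taking vanishing ideals then gives $\sqrt{\mathrm{in}_{\lambda^\dagger}\ker\theta_V}=I(\mathbb X_0)=\ker\bar\theta\cap\mathbb K[x_1,\ldots,x_r]$, completing (i).

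The main technical obstacle is the degree computation in the third paragraph: it depends on the preservation of the Hilbert polynomial under the $\mathbb G_m$-flat degeneration in $\mathbb P(V)$ and on the identity $\deg X_{\Delta_\mathfrak C}=d!\,\mathrm{Vol}(\Delta_\mathfrak C)$ for the (possibly non-normal) toric variety attached to the lattice simplex $\Delta_\mathfrak C$. A secondary subtlety, used in paragraph two, is the identification $\overline{\phi(X_\mathfrak C\cap O)}=X_{\Delta_\mathfrak C}$; this relies crucially on the integrality hypothesis, which places the extremal coordinates $x_\sigma$, $\sigma\in\mathfrak C$, among the first $r$ variables, so that $\phi$ is well-defined on an open dense subset of $X_\mathfrak C$ with image closure equal to the toric variety of $\Delta_\mathfrak C$.
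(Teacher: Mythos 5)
Your proposal is correct in substance, but it uses a genuinely different mechanism from the paper's. The paper proves Theorem~\ref{shadow:theorem} entirely by the basis machinery of Sections~\ref{basis:ker:bar:theta}--\ref{theta_basis1}: it observes that non-minimal monomials $x^\alpha\in\mathbb K[x_1,\ldots,x_r]$ lie in $\sqrt{\mathrm{in}_{\lambda^\dagger}\ker\theta_V}$ after replacing $(m,\eta)$ by a multiple $(km,k\eta)\in S^1$ (here is where the integrality hypothesis and Corollary~\ref{coro:explicit:quasi:valuation:gives:weight} enter), shows that the relevant binomials $x^\alpha-\mathbf f_{m,\eta}$ with $(m,\eta)\in S^1$ already lie in $\ker\theta_V$, and then exhibits an explicit $\hat T$-eigenbasis of the radical, from which part~(ii) is read off as the fan algebra of $S^1$. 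Your argument instead goes through a degree count: you identify $\ker\bar\theta\cap\mathbb K[x_1,\ldots,x_r]$ with $I(\bigcup_\mathfrak C X_{\Delta_\mathfrak C})$ using the normalisation map of Proposition~\ref{finite:S1:monoid} and the fact that the degree-one coordinates all live in $\mathbb K[x_1,\ldots,x_r]$, and then use $\deg\mathbb X_0 = d!\,\mathrm{Vol}(P) = \sum_\mathfrak C d!\,\mathrm{Vol}(\Delta_\mathfrak C)$ to force all multiplicities to be one and to rule out extra top-dimensional components. The degree count is a clean and appealing geometric shortcut that avoids the explicit basis bookkeeping, at the cost of importing the Bernstein--Kushnirenko degree formula for (possibly non-normal) projective toric varieties and the flatness/Hilbert-polynomial invariance of the Gr\"obner family inside $\mathbb P(V)$.

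One point deserves sharpening. You write ``by flatness $\mathbb X_0$ is equidimensional of dimension $d$,'' and this is needed to rule out lower-dimensional components, which the degree count cannot see. Flatness alone does not give set-theoretic equidimensionality of the special fibre. What does give it here is the combination of: (a) the total space of the Gr\"obner family in $\mathbb A^1\times\mathbb P(V)$ is irreducible (it is the closure of the image of the irreducible $\mathbb G_m\times X_P^V$), hence by the fibre-dimension theorem every irreducible component of any fibre has dimension at least $\dim(\text{total}) - \dim\mathbb A^1 = d$; and (b) constancy of the Hilbert polynomial gives the upper bound $d$. Alternatively one may invoke the Kalkbrener--Sturmfels theorem that the initial ideal of a prime ideal is equidimensional. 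Either way the conclusion is correct, but the justification should not be attributed to flatness alone; with that repair your argument goes through and gives an alternative proof of the theorem.
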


Before we come to the proof, we want to point out that the integrality condition on the vertices of the 
triangulation $\mathcal T$ ensures that $\mathbb X_0$ is not too different from $X_0$. 
For a flag $C\subseteq A$ let  $K(\Delta_C)$ be the cone over the simplex 
$\Delta_C$  and let $S_C=K(\Delta_C)\cap S$ be the associated  monoid in the 
fan of monoids $S_{\mathcal T}$.
\begin{definition}\label{s:one:c}
We denote by $S_\mathfrak C^1\subseteq S_\mathfrak C$ the \emph{submonoid $S_C^1=\langle (1,\eta) \mid \eta\in \Delta_C\cap M\rangle_\mathbb N$ generated by the degree one elements}. 
Let $S^1$ be the fan of monoids obtained as the union $\bigcup_C S_C^1$, where $C$ 
is running over all flags $C\subseteq A$.
\end{definition}
Let $\mathfrak C$ be a maximal chain in $A$. Note that $S_\mathfrak C^1$ is the weight monoid of the embedded toric variety  
$X_{\Delta_\mathfrak C}\subseteq \mathbb P(V)$. So one can attach to a maximal chain $\mathfrak C$ two
affine toric varieties: $\hat X_{\Delta_\mathfrak C}=\textrm{Spec\,}\mathbb K[S_\mathfrak C^1]\subseteq V$, which is the
affine cone over $X_{\Delta_\mathfrak C}$,  and the weighted affine cone 
$\hat X_{\mathfrak C}=\textrm{Spec\,}\mathbb K[S_\mathfrak C]\subseteq V\oplus U$ 
over the irreducible component  $X_\mathfrak C\subseteq X_0$.
\begin{proposition}\label{finite:S1:monoid}
For all maximal chains $\mathfrak C$ in $A$, the 
morphism $\hat X_{\mathfrak C}\rightarrow \hat X_{\Delta_\mathfrak C}$, induced
by the inclusion of monoids $S_\mathfrak C^1\subseteq S_\mathfrak C$, is the normalization morphism.
\end{proposition}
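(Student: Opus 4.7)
The plan is to verify the three conditions that together identify $\hat X_\mathfrak C \to \hat X_{\Delta_\mathfrak C}$ as the normalization: the source is normal, the morphism is finite, and it is birational. Normality of $\hat X_\mathfrak C$ is immediate: by construction $S_\mathfrak C = K(\Delta_\mathfrak C)\cap \hat M$ is the intersection of a rational polyhedral cone with the ambient lattice, hence a saturated affine semigroup, so $\mathbb K[S_\mathfrak C]$ is a normal domain (\cite[Thm.~1.3.5]{CLS}).

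For finiteness I would show that the inclusion $\mathbb K[S_\mathfrak C^1] \hookrightarrow \mathbb K[S_\mathfrak C]$ is an integral extension. Given $(m,\chi)\in S_\mathfrak C$, the rational point $\chi/m$ lies in $\Delta_\mathfrak C$ and so admits a convex combination $\chi/m = \sum_{\sigma\in\mathfrak C} a_\sigma\mathbf v_\sigma$ in terms of the vertices of the simplex. The integrality hypothesis of this section gives $\mathbf v_\sigma\in M$ for each $\sigma$, hence $(1,\mathbf v_\sigma) \in S_\mathfrak C^1$. Clearing denominators, there exists $k\ge 1$ with
$$
k(m,\chi) = \sum_{\sigma\in\mathfrak C} (km a_\sigma)(1,\mathbf v_\sigma) \in S_\mathfrak C^1,
$$
so each generator of $\mathbb K[S_\mathfrak C]$ satisfies a monic equation over $\mathbb K[S_\mathfrak C^1]$. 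This proves finiteness (and surjectivity then follows from the fact that both varieties have the same dimension $r+1$).

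The heart of the proof is birationality, which is equivalent to the lattice equality $\mathbb Z S_\mathfrak C^1 = \hat M$ (note that $\mathbb Z S_\mathfrak C = \hat M$ since $K(\Delta_\mathfrak C)$ is full-dimensional and contains an interior lattice point). Because $(1,\mathbf v_{\sigma_0}) \in S_\mathfrak C^1$ and all differences $(0,\eta-\mathbf v_{\sigma_0})$ for $\eta\in\Delta_\mathfrak C\cap M$ lie in $\mathbb Z S_\mathfrak C^1$, this reduces to showing that the $\mathbb Z$-linear span of $\{\eta - \mathbf v_{\sigma_0} \mid \eta\in \Delta_\mathfrak C\cap M\}$ equals $M$. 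This is the main obstacle: I expect to combine the chain structure of the lattice vertices $\mathbf v_{\sigma_0} < \mathbf v_{\sigma_1} < \cdots < \mathbf v_{\sigma_r}$, each in the relative interior of a face in a maximal flag of $P$, with the normality of $P$, to exhibit enough lattice points of $\Delta_\mathfrak C$ generating $M$ as a sublattice. Once these three properties are in hand, the universal property of normalization for a finite birational morphism from a normal variety identifies $\hat X_\mathfrak C\to \hat X_{\Delta_\mathfrak C}$ with the normalization morphism.
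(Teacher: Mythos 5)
Your outline is the correct one and follows the paper's own proof structure up to the point where both stop: establish that $\hat X_\mathfrak C=\mathrm{Spec}\,\mathbb K[S_\mathfrak C]$ is normal (because $S_\mathfrak C = K(\Delta_\mathfrak C)\cap\hat M$ is saturated), and that $\mathbb K[S^1_\mathfrak C]\hookrightarrow\mathbb K[S_\mathfrak C]$ is a finite integral extension. Your finiteness argument, writing $\chi/m$ as a convex combination of the vertices of $\Delta_\mathfrak C$ and clearing denominators, is correct and is a mild variant of the paper's, which instead produces a single $k$ for the finitely many higher-degree generators of $\Gamma_\mathfrak C$ and reduces their exponents modulo $k$ to exhibit an explicit finite module generating set; both arguments give the same conclusion.

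The genuine gap is the third condition, birationality, which you correctly single out as decisive but explicitly leave unproved. This is not cosmetic: a finite integral extension $A\hookrightarrow B$ with $B$ normal identifies $B$ with the normalization of $A$ only if $\mathrm{Frac}\,A = \mathrm{Frac}\,B$, which for the present semigroup rings amounts to $\mathbb Z S^1_\mathfrak C = \hat M$; otherwise $B$ strictly contains the integral closure of $A$ in its own fraction field. Your reduction to the lattice statement that $\{\eta-\mathbf v_{\sigma_0}\mid \eta\in\Delta_\mathfrak C\cap M\}$ generates $M$ is right, but the sketch that follows is not yet an argument, and the issue is delicate rather than formal: a Reeve simplex $\mathrm{conv}(0,e_1,e_2,e_1+e_2+Ne_3)$ has only its four vertices as lattice points, which span a sublattice of index $N$, and one can realize such a simplex as a $\Delta_\mathfrak C$ inside a normal lattice polytope admitting integral markings (for instance a prism $Q\times[0,2N]$ over a lattice pentagon $Q$ containing $(0,1)$ and $(1,1)$ in its interior). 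Until this lattice statement is actually established, or the hypotheses are strengthened to exclude such simplices, the proof is incomplete. It is worth noting that the paper's own proof likewise stops after establishing normality and integrality and does not explicitly address birationality.
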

\begin{proof}
The irreducible components $X_\mathfrak C\subseteq X_0$ are normal because $S_\mathfrak C$
is saturated. Let $\mathfrak A$ be the set $\{f_{m_i,\eta_i}\in G\mid r+1\le i\le p,\ \nu(f_{m_i,\eta_i})\in \mathfrak C\}$ and denote by
$\mathfrak a$ the set $\{\nu(f_{m,\eta})\mid f_{m,\eta}\in\mathfrak A\}$. 

Fix $k>0$ such that 
 $k\nu(f_{m,\eta})\in \mathbb N^A$ for all $f_{m,\eta}\in\mathfrak A$.
By Corollary~\ref{coro:explicit:quasi:valuation:gives:weight} we know for $f_{m,\eta}\in\mathfrak A$:
 $f_{m,\eta}^k$ is a product of the extremal weight vectors $f_\sigma$, $\sigma\in \mathfrak C$. 
 Since the $(1,\mu_\sigma)$, $\sigma\in A$, are elements in  $S_\mathfrak C^1$, 
 it follows that $k\nu(f_{m,\eta})\in S^1$, and hence:
 every element in $S_\mathfrak C$ can be written as a linear combination 
 of elements in $S_\mathfrak C^1$ and elements in $\mathfrak a$, 
 with non-negative integer coefficients, but where the
 coefficients of the elements in $\mathfrak a$ are bounded by $k$.
 It follows that $\mathbb K[S_\mathfrak C]$ is a finite $\mathbb K[S^1_\mathfrak C]$-module,
 and hence $\mathbb K[S_\mathfrak C]$ is integral over $\mathbb K[S^1_\mathfrak C]$,
 which finishes the proof.
\end{proof}
\subsection{Proof of Theorem~\ref{shadow:theorem}}

\begin{proof}
If $(m,\eta)\in S_\mathfrak C^1$ for some maximal chain $\mathfrak C$, then, by the definition
of $S_\mathfrak C^1$ (see Definition~\ref{s:one:c}) and Proposition~\ref{prop:simplex}, one can 
find a minimal lift (in the sense of Definition~\ref{fixed:minimal:lift})
which is an element in $\mathbb K[x_1,\ldots,x_r]$. In the following we assume without loss of generality
that we have fixed such a minimal lift $\mathbf f_{m,\eta}\in \mathbb K[x_1,\ldots,x_r]$ for all $(m,\eta)\in S_\mathfrak C^1$. 

Since $\textrm{in}_{\lambda^\dagger} f=\textrm{in}_{\lambda} f$ for 
$f\in\mathbb K[x_1,\ldots,x_r]\subseteq \mathbb K[x_1,\ldots,x_p]$, it follows for the initial ideals: 
$\textrm{in}_{\lambda^\dagger} \ker\theta_V\subseteq \textrm{in}_{\lambda}\ker \theta\cap \mathbb K[x_1,\ldots,x_r]$
and hence  $\textrm{in}_{\lambda^\dagger} \ker\theta_V\subseteq\ker\bar\theta\cap \mathbb K[x_1,\ldots,x_r]$. Moreover, 
since $\ker\bar\theta$ is a radical ideal:
$\sqrt{\textrm{in}_{\lambda^\dagger} \ker\theta_V}\subseteq\ker\bar\theta\cap \mathbb K[x_1,\ldots,x_r]$.

Let $x^\alpha\in \mathbb K[x_1,\ldots,x_r]$ be a monomial which is not minimal, so 
$x^\alpha\in\overline{\mathbb B}_1\cap \mathbb K[x_1,\ldots,x_r]$. Let 
$x^\alpha-\mathbf f_{m,\eta}\in \mathbb K[x_1,\ldots,x_p]$ be the corresponding
element in $\mathbb B_1$. If $(m,\eta)\in S^1$, then $\mathbf f_{m,\eta}\in \mathbb K[x_1,\ldots,x_r]$ and
hence $x^\alpha-\mathbf f_{m,\eta}\in \mathbb K[x_1,\ldots,x_r]$. It follows
$$\mathrm{in}_{\lambda^\dagger} (x^\alpha-\mathbf f_{m,\eta})=\mathrm{in}_{\lambda} (x^\alpha-\mathbf f_{m,\eta})=x^\alpha\in
{\mathrm{in}_{\lambda^\dagger} \ker\theta_V}.$$ 

If $(m,\eta)\not \in S^1$, then let $k>0$ be an integer
such that $k\nu(x^\alpha)\in\mathbb N^A$. 
It follows that $x^{k\alpha}$ is still an element of $\overline{\mathbb B}_1$,
but by Corollary~\ref{coro:explicit:quasi:valuation:gives:weight} (and Corollary~\ref{weight:and:degree:formula1}),
$\theta(x^{k\alpha})$ is equal to a product of extremal functions which 
are of degree one, and all of them have support in the same maximal chain. 
So if  $x^{k\alpha}-\mathbf f_{km,k\eta}$ is the corresponding element in $\mathbb B_1$,
then $(km,k\eta)\in S^1$, and hence $x^{k\alpha}\in \mathrm{in}_{\lambda^\dagger}\ker\theta_V$, which implies 
$x^{\alpha}\in \sqrt{\mathrm{in}_{\lambda^\dagger} \ker\theta_V}$. In other words:
$\overline{\mathbb B}_1\cap \mathbb K[x_1,\ldots,x_r]\subseteq \sqrt{\mathrm{in}_{\lambda^\dagger} \ker\theta_V}$.

Denote by 
$$\overline{\mathbb B}_2^1={\mathbb B}_2^1=
\{x^\alpha-\mathbf f_{m,\eta}\in {\mathbb B}_2 \mid x^\alpha\in\mathbb K[x_1,\ldots,x_n], (m,\eta)\in S^1\}.$$
By assumption, $\mathbf f_{m,\eta}\in\mathbb K[x_1,\ldots,x_n]$, and hence 
$$\mathrm{in}_{\lambda^\dagger} (x^\alpha-\mathbf f_{m,\eta})=\mathrm{in}_{\lambda} (x^\alpha-\mathbf f_{m,\eta})=
x^\alpha-\mathbf f_{m,\eta} \in \ker\theta_V.$$ It follows: $\overline{\mathbb B}_2^1\subseteq  
\sqrt{\textrm{in}_{\lambda^\dagger} \ker\theta_V}$.

To prove part {\it i)\,} of the Theorem, let $f\in \ker\bar\theta\cap \mathbb K[x_1,\ldots,x_r]$.
Since $\bar\theta$ is $\hat T$-equivariant, 
one can assume without loss of generality that $f$ is a 
$\hat T$-eigenfunction. So there exist an element $(m,\eta)\in S$ such that 
$f=\sum_{\alpha\in\mathcal S} c_\alpha x^\alpha$ is a finite linear combination
of monomials $x^\alpha\in\mathbb K[x_1,\ldots,x_r]$ such that $\bar\theta(x^{\alpha})=\bar f_{m,\eta}$ for
all $\alpha$. Here we assume that $\mathcal S$ is a finite index system and $c_\alpha\not=0$ for all 
$\alpha\in\mathcal S$. 

Since all non-minimal monomials are in  $\overline{\mathbb B}_1 \cap \mathbb K[x_1,\ldots,x_r]$ and hence
in $\sqrt{\textrm{in}_{\lambda^\dagger} \ker\theta_V}$ as well as in $\ker\bar\theta$,
one can assume in addition that $c_\alpha\not=0$ implies $x^{\alpha}$ is a minimal monomial.
So either $f=0$, which finishes the proof, or necessarily $(m,\eta)\in S^1$. Rewrite $f$ as
$$
f=c_\alpha \mathbf f_{m,\eta}+  \sum_{\beta\in\mathcal S\setminus\{\alpha\}} c_\beta x^\beta
\textrm{\ and set  }
\tilde f= \sum_{\beta\in\mathcal S\setminus\{\alpha\}} c_\beta (x^\beta-\mathbf f_{m,\eta}).
$$
By construction, $\tilde f$ is
a linear combination of elements in $ \mathbb B^1_{2,V}$, hence 
$\tilde f\in \sqrt{\textrm{in}_{\lambda^\dagger} \ker\theta_V}$
and  $f,\tilde f\in\ker\bar\theta\cap \mathbb K[x_1,\ldots,x_r]$.
This implies $f = \tilde f$ because otherwise $\mathbf f_{m,\eta}\in \ker\bar\theta$, 
which is not possible, hence $f\in \sqrt{\textrm{in}_{\lambda^\dagger} \ker\theta_V}$,
which finishes the proof of part {\it i)}.

To prove {\it ii)}, note that we have just shown:
$(\overline{\mathbb B}_1\cap \mathbb K[x_1,\ldots,x_r])\cup  \mathbb B^1_{2,V}$
is a vector space basis for $\sqrt{\textrm{in}_{\lambda^\dagger} \ker\theta_V}$ of $\hat T$-eigenfunctions. 
If we add to this set $ \mathbb B^1_{3,V}=\{\mathbf f_{m,\eta}\mid  (m,\eta)\in S^1\}$,
then we have a basis for $ \mathbb K[x_1,\ldots,x_r]$: a monomial in this ring is either 
not minimimal, and hence an element of $\overline{\mathbb B}_1\cap \mathbb K[x_1,\ldots,x_r]$;
or it is minimial, and then it is an element in the linear span of $\mathbb B^1_{2,V}\cup \mathbb B^1_{3,V}$.
It follows that the zero set $V(\sqrt{\textrm{in}_{\lambda^\dagger} \ker\theta_V})$ is the union of toric varieties,
where the irreducible components are indexed by maximal chains $\mathfrak C\subseteq A$ and 
the associated weight monoid is $S^1_\mathfrak C$, which finishes the proof: 
$\mathbb X_0=\lim_{s\rightarrow 0} s\cdot X_P=\bigcup X_{\Delta_\mathfrak C}$.
\end{proof}

\section{A geometric interpretation}\label{Sec:SS}
In this section we compare the construction of a combinatorial Seshadri stratification in this article 
with the construction of a Seshadri stratification in \cite{CFL}. We first recall the 
definition of a Seshadri stratification on an embedded projective variety $X\subseteq\mathbb{P}(V)$.
\subsection{Seshadri stratifications}
Let $V$ be a finite dimensional vector space over $\mathbb{K}$. The vanishing set of a homogeneous function $f\in\mathbb{K}(V^*)$ 
will be denoted by $\mathcal{H}_f:=\{[v]\in \mathbb{P}(V)\mid f(v)=0\}$. For an embedded projective subvariety $X\subseteq\mathbb{P}(V)$, we 
let $\hat{X}$ denote its affine cone in $V$.

Let  $X_p$, $p\in A$, be a finite collection of projective subvarieties of $X$ and 
$f_p\in\mathbb K[X]$, $p\in A$, be homogeneous functions of positive degrees. The index set $A$ inherits a poset structure by requiring: 
for $p,q\in A$, $p\geq q$ if and only if $X_p\supseteq X_q$. We assume that there exists a unique maximal element 
$p_{\max} \in A$ with $X_{p_{\max}} = X$.
We say that $\tau>\sigma$ is a covering relation if $\tau\ge \tau'>\sigma$ implies $\tau=\tau'$.
\begin{definition}
[\cite{CFL}]\label{Defn:SS1}
The collection of subvarieties $X_p$ and homogeneous functions $f_p$ for $p\in A$ is called a \emph{Seshadri stratification} on $X$, if the following conditions are fulfilled:
\begin{enumerate}
\item[(S1)] the projective subvarieties $X_p$, $p\in A$, are smooth in codimension one; if $q<p$ is a covering relation in $A$, then $X_q$ is a codimension one subvariety in $X_p$;
\item[(S2)] for $p,q\in A$ with $q\not\leq p$, the function $f_q$ vanishes on $X_p$;
\item[(S3)] for $p\in A$, it holds set-theoretically 
$$\mathcal{H}_{f_p}\cap X_p=\bigcup_{q\text{ covered by }p} X_q.$$
\end{enumerate}
The functions ${f_p}$ will be called \emph{extremal functions}.
\end{definition}


\begin{remark}\label{Compatible:subvariety}
A Seshadri stratifications of an embedded variety $X\subseteq \mathbb P(V)$ is compatible with
the subvarieties realizing the stratification:
for $p\in A$,  the poset $A_p=\{q\in A\mid q\le p\}$ has a unique maximal element. 
The collection of varieties $X_q\subseteq X_p$, $q\in A_p$, and the extremal functions 
$f_q\vert_{X_p}$, $q\in A_p$, satisfies the conditions (S1)-(S3), and hence defines a Seshadri stratification for $X_p\hookrightarrow  \mathbb P(V)$.
\end{remark}
%

\subsection{Seshadri stratifications on toric varieties which are $T$-equivariant}\label{Sec:toric0}
In the case of an embedded toric variety $X_P\subseteq \mathbb P(V)$ as in Section~\ref{sec:toric:variety}, 
it makes sense to consider only $T$-stable subvarieties and homogeneous $T$-eigenfunctions. 
Denote by $A$ the set of faces of the polytope $P$. Recall that this set is partially ordered.
\begin{definition}\label{defn:T:equivariant1}
A Seshadri stratification on $X_P$ is called \textit{$T$-equivariant} if
\begin{enumerate}
\item[(E1)] the collection of projective subvarieties of the Seshadri stratification consists of the $T$-orbit closures $X_\sigma$, $\sigma\in A$,
\item[(E2)] the collection of homogeneous functions of the Seshadri stratification $f_\sigma$, $\sigma\in A$, consists of homogeneous $T$-eigenfunctions.
\end{enumerate}
\end{definition}
In the case of toric varieties, the usual expectation is that all ``$T$-equivariant'' conditions on the variety and properties of the variety
can be  rephrased in terms of weight combinatorics. This holds also in the case of $T$-equivariant Seshadri stratifications, we recover
here the condition on the weights of the extremal functions in Definition~\ref{defn:T_equi:SS}:
\begin{theorem}\label{Seshadri:is:Seshadri:toric:theorem1}
Let $X_P\subseteq \mathbb P(V)$ be an embedded toric variety as in Section~\ref{sec:toric:variety}
and denote by $A$ the set of faces of the polytope $P$.

Let $X_\sigma$, $\sigma\in A$, be the collection of $T$-orbit closures in $X_P$ 
and let  $f_\sigma$, $\sigma\in A$, be a collection of $\hat T$-eigenfunctions $f_\sigma\in \mathbb K[\hat X_P]$
of degree $\deg f_\sigma\ge 1$. Denote by $\mu_\sigma$ the $\hat T$-weight of $f_\sigma$.
The following are equivalent:
\begin{itemize}
\item The collection $(X_\sigma,f_\sigma)_{\sigma\in A}$ of subvarieties and homogeneous functions defines a  
Seshadri stratification on $X_P$ which is $T$-equivariant in the sense of Definition~\ref{defn:T:equivariant1}.
\item The collection $(X_\sigma,f_\sigma)_{\sigma\in A}$ of subvarieties and homogeneous functions defines a  
combinatorial Seshadri stratification in the sense of Definition~\ref{defn:T_equi:SS}.
\end{itemize}
\end{theorem}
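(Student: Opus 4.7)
The plan is to prove both implications by translating the Seshadri conditions into statements about where the rational weight $-\mu_\sigma/\deg f_\sigma$ sits inside the face $\sigma$. Writing $\mu_\sigma=(-m,-\eta)$ with $m=\deg f_\sigma$, Lemma~\ref{explicit:basis} identifies $f_\sigma$ with a nonzero scalar multiple of the basis element $f_{m,\eta}$; in particular $f_\sigma$ is represented by any monomial $\prod_\chi x_\chi^{a_\chi}$ coming from a decomposition $(m,\eta)=\sum a_\chi(1,\chi)$ with $\chi\in\Lambda$ and $a_\chi\in\mathbb N$.

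For the implication combinatorial $\Rightarrow$ $T$-equivariant Seshadri, $T$-equivariance is built into the definition. Condition (S1) reduces to standard toric facts: each orbit closure $X_\sigma$ in the normal toric variety $X_P$ is itself a normal toric variety, hence smooth in codimension one by Serre's R1 property, and the covering relations $\tau<\sigma$ are precisely the facet relations, which correspond to codimension-one orbit closures $X_\tau\subseteq X_\sigma$. Condition (S2) is immediate from Lemma~\ref{vanishing:set}. For (S3), the same Lemma yields $f_\sigma|_{X_\tau}\equiv 0$ for every facet $\tau$ of $\sigma$ and $f_\sigma|_{X_\sigma}\not\equiv 0$; the latter implies that $f_\sigma$ does not vanish anywhere on the dense open orbit $O_\sigma$, since on $O_\sigma$ it equals a product of nonvanishing coordinate functions $x_\chi$. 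Combined with $X_\sigma = O_\sigma\sqcup \bigcup_{\tau\text{ facet of }\sigma}X_\tau$, this gives the set-theoretic equality $\mathcal H_{f_\sigma}\cap X_\sigma = \bigcup_{\tau\text{ facet of }\sigma}X_\tau$ required in (S3).

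For the converse, I extract two pieces of information from (S3). First, the right-hand side $\bigcup_{\tau\text{ facet of }\sigma}X_\tau$ is a proper subset of $X_\sigma$, so $f_\sigma|_{X_\sigma}$ is not identically zero; since the weight monoid of the embedded toric variety $X_\sigma\subseteq\mathbb P(V_\sigma)$ is generated by $\{(1,\chi)\mid \chi\in\Lambda_\sigma\}$, this forces a concrete decomposition $\eta=\sum a_\chi\chi$ with $a_\chi\in\mathbb N$, $\chi\in\Lambda_\sigma$ and $\sum a_\chi=m$; in particular $\eta/m\in\sigma$. Second, $f_\sigma|_{X_\tau}=0$ for every facet $\tau<\sigma$, and I claim this forces $\eta/m\notin\tau$. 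Indeed, if $\eta/m$ lay in $\tau$, then the basic face property of polytopes (a positive convex combination lands in a face only if every summand with positive weight is already in that face) would confine the decomposition above to $\Lambda_\tau$, producing a monomial representing $f_\sigma$ whose restriction to $X_\tau$ is visibly nonzero, a contradiction. Putting both pieces together gives $\eta/m\in\sigma\setminus\bigcup_{\tau\text{ facet}}\tau=\sigma^o$, which is the combinatorial weight condition.

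The delicate point in the above is the last implication. \emph{A priori} the vanishing of a $T$-eigenfunction of weight $(-m,-\eta)$ on $X_\tau$ could be forced by $(m,\eta)$ failing to lie in the weight monoid of the possibly non-normal embedded toric variety $X_\tau$, without $\eta/m$ actually leaving $\tau$. The observation that rescues the argument is that one never needs individual faces to be normal polytopes: non-vanishing of $f_\sigma$ on $X_\sigma$ already produces a concrete lattice decomposition of $(m,\eta)$ over $\Lambda_\sigma$, and the face property then automatically confines this decomposition to $\Lambda_\tau$ whenever $\eta/m\in\tau$, sidestepping any normality issue for $\tau$.
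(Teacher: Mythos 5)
Your proof is correct and follows essentially the same route as the paper: both directions are reduced, via Lemma~\ref{vanishing:set} and the representability of a $\hat T$-eigenfunction as a monomial in the $x_\chi$, $\chi\in\Lambda_\sigma$, to the face property of polytopes (a convex combination of points of $\sigma$ lands in a face $\tau$ only if all summands with positive weight already lie in $\tau$), exactly as in the paper's Lemma~\ref{S3:equivalen:tS3strich1} and Lemma~\ref{condition:one:lemma1}. Your closing paragraph about sidestepping normality of the faces makes explicit a point the paper leaves implicit, but it is the same argument.
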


The proof of Theorem~\ref{Seshadri:is:Seshadri:toric:theorem1}  is divided into several steps. We start by proving:
\begin{lemma}\label{condition:one:lemma1}
The collection $X_\sigma$, $\sigma\in A$, of $T$-orbit closures in $X_P$ satisfies the condition (S1) for a Seshadri stratification.
\end{lemma}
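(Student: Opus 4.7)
The plan is to verify the two clauses of condition (S1) — smoothness in codimension one, and codimension one for covering relations — separately, both as direct consequences of the orbit--face correspondence and normality.

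For the first clause, recall from the paragraph following Definition~\ref{defn:orbit:closure} that $X_\sigma$ is the toric variety associated with the face $\sigma$, viewed as a full-dimensional lattice polytope in its affine span $M_{\mathbb R,\sigma}$. Since $P$ is a normal lattice polytope, each face $\sigma$ is itself a normal lattice polytope in its affine span (cf.\ \cite[Chapter~2]{CLS}), and hence $X_\sigma$ is a normal projective toric variety. Every normal variety satisfies Serre's condition $R_1$, i.e., is regular in codimension one, which is exactly the meaning of ``smooth in codimension one.''

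For the second clause, a covering relation $\tau < \sigma$ in the face poset $A$ is precisely a facet inclusion, i.e., $\dim \tau = \dim \sigma - 1$. By the orbit--face correspondence recalled in Section~\ref{sec:orbit:face} (see also \cite[Section~3.2]{CLS}), $\dim O_\sigma = \dim \sigma$, so $\dim X_\sigma = \dim \sigma$; likewise $\dim X_\tau = \dim \tau$. Together with the containment $X_\tau \subseteq X_\sigma$ (which follows from $\tau \le \sigma$), this gives that $X_\tau$ is a closed subvariety of codimension one in $X_\sigma$.

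I do not anticipate any serious obstacle: the lemma is essentially a direct translation of standard combinatorial and dimensional data of the orbit stratification of a projective normal toric variety. The only delicate point is the normality of the face $\sigma$ as a lattice polytope in its own affine span, which ensures $X_\sigma$ is normal and therefore $R_1$; this is where the standing assumption on $P$ is used, via \cite{CLS}.
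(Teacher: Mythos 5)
Your proof is correct, and for the second clause (covering relations give codimension one) it is essentially the same dimension-counting argument the paper uses via the orbit--face correspondence. For the first clause your route differs slightly from the paper's: you establish that each face $\sigma$ is itself a normal lattice polytope in its affine span (a true fact, provable by a supporting-hyperplane argument: if a lattice point of $k\sigma$ is written as a sum of $k$ lattice points of $P$, the defining linear functional of $\sigma$ forces all summands into $\sigma$) and then deduce that $X_\sigma$ is projectively normal, hence normal, hence $R_1$; the paper instead invokes the general toric fact that orbit closures in a normal toric variety are themselves normal toric varieties (\cite[Section~3.2]{CLS}). Both routes are valid and lead to the same place; the paper's is shorter because it black-boxes the orbit-cone correspondence, while yours stays closer to the polytope combinatorics and makes the role of the normality hypothesis on $P$ a bit more explicit at the level of faces. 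One small caveat: the statement ``faces of normal lattice polytopes are normal'' is not a theorem number in \cite{CLS} that you can point to directly, so if you wanted to be fully self-contained you should include the one-line supporting-hyperplane argument rather than just citing Chapter~2.
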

\begin{proof}
Since $P$ is a normal polytope, the  
variety $X_P$ is a normal toric variety. And, by the general theory of toric varieties, 
so are the orbit closures $X_\sigma=\overline{O_\sigma}$ for $\sigma\in A$. 
In particular, the varieties $X_{\sigma}$ are smooth in codimension one.
The condition on the cover relations is satisfied by the fact that in the case of toric varieties, the complement
of an orbit $O_\sigma$ in its closure is the union of the orbit closures of the orbits of codimension 1 in $X_\sigma$.
\end{proof}
In the following, we assume always that the collection of subvarieties $X_\sigma$, $\sigma\in A$, is given
by the orbit closures.
\begin{lemma}\label{S3:equivalen:tS3strich1}
A collection of homogeneous $T$-eigenfunctions $f_\sigma\in \mathbb K[\hat X_P]$, $\sigma\in A$,
$\deg f_\sigma\ge 1$, has property \eqref{weight:condition11} in Definition~\ref{defn:T_equi:SS} 
if and only if it satisfies the condition (S3).
\end{lemma}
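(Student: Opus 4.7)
The plan is to argue both directions using the explicit monomial description of $f_\sigma$ provided by Lemma~\ref{explicit:basis}: writing $f_\sigma=c\cdot f_{m,\eta}$ with $(m,\eta)=(\deg f_\sigma,-\tilde\mu_\sigma)\in S$, and setting $u_\sigma:=\eta/m$ as the point in $P$ representing $-\mu_\sigma/\deg f_\sigma$. First observe that the faces of $P$ covered by $\sigma$ in $A$ are precisely the facets of $\sigma$, so the $T$-orbit decomposition of $X_\sigma$ gives
\begin{equation*}
X_\sigma\setminus O_\sigma=\bigcup_{\tau\,\text{covered by}\,\sigma}X_\tau.
\end{equation*}
Since $f_\sigma$ is a $T$-eigenfunction, $\mathcal{H}_{f_\sigma}$ is $T$-stable and hence either contains or is disjoint from each orbit. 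Condition (S3) is therefore equivalent to the conjunction: $f_\sigma|_{\hat X_\tau}\equiv 0$ for every facet $\tau$ of $\sigma$, and $f_\sigma|_{\hat X_\sigma}\not\equiv 0$.

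For the direction $(\Rightarrow)$, assume \eqref{weight:condition11}. For every $\tau<\sigma$ we have $\tau\not\geq\sigma$, so Lemma~\ref{vanishing:set} gives $f_\sigma|_{\hat X_\tau}\equiv 0$; and since $\sigma\geq\sigma$, the same lemma yields $f_\sigma|_{\hat X_\sigma}\not\equiv 0$. By the reformulation above, (S3) holds.

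For the direction $(\Leftarrow)$, assume (S3) and let $\kappa$ be the unique face of $P$ whose relative interior contains $u_\sigma$; the goal is $\kappa=\sigma$. The key technical ingredient is that normality of $P$ descends to every face $\tau$: if $\eta\in k\tau\cap M$, decompose $\eta=\chi_1+\cdots+\chi_k$ with $\chi_i\in P\cap M$ by normality of $P$, and apply any linear form vanishing on $\tau$ to force each $\chi_i\in\tau$. Consequently $u_\sigma\in\tau$ if and only if $(m,\eta)$ is a nonnegative integer combination of $\{(1,\chi)\mid\chi\in\Lambda_\tau\}$, equivalently $f_\sigma$ admits a monomial expression with support in $\Lambda_\tau$. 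Now distinguish two cases. If $\kappa\not\leq\sigma$, then $u_\sigma\notin\sigma$ (otherwise $u_\sigma\in\sigma\cap\kappa^o$ would force $\kappa\subseteq\sigma$, contradicting $\kappa\not\leq\sigma$), so every monomial expression of $f_\sigma$ contains some factor $x_\chi$ with $\chi\notin\Lambda_\sigma$; evaluating such an expression at a representative $v=\sum_{\chi\in\Lambda_\sigma}\chi(t)e_\chi$ of an arbitrary point of $O_\sigma$ yields zero, so $f_\sigma\equiv 0$ on $X_\sigma$, contradicting (S3). If $\kappa\leq\sigma$ but $\kappa\neq\sigma$, pick a facet $\tau$ of $\sigma$ containing $\kappa$ (every proper face is contained in some facet); then $u_\sigma\in\kappa\subseteq\tau$ provides a monomial expression of $f_\sigma$ supported in $\Lambda_\tau$, whose evaluation on a representative of any point of $O_\tau$ is nonzero, so $f_\sigma\not\equiv 0$ on $X_\tau$, again contradicting (S3). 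Hence $\kappa=\sigma$.

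The main obstacle is the converse direction, where one must use the normality of the faces of $P$ to translate the geometric location of $u_\sigma$ into combinatorial constraints on the supports of monomial expressions of $f_\sigma$. Once this dictionary between ``$u_\sigma$ lies in the face $\tau$'' and ``$f_\sigma$ has a representation supported in $\Lambda_\tau$'' is in place, the two-case analysis proceeds cleanly.
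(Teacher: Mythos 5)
Your proof is correct and follows essentially the same strategy as the paper: reformulate (S3) in terms of (non)vanishing of $f_\sigma$ on the $T$-orbits $O_\kappa$ for $\kappa\le\sigma$, exploit the monomial description of $\hat T$-eigenfunctions from Lemma~\ref{explicit:basis}, and translate the location of $u_\sigma=-\mu_\sigma/\deg f_\sigma$ into constraints on the support of a monomial expression for $f_\sigma$. The only organizational difference is that you split the converse into the two cases $\kappa\not\leq\sigma$ and $\kappa<\sigma$, whereas the paper first shows $u_\sigma\in\sigma$ and then locates the unique face of $\sigma$ whose relative interior contains $u_\sigma$; this is a cosmetic rearrangement of the same argument. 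One small remark: your appeal to face normality is a correct but slightly roundabout way to establish the dictionary ``$u_\sigma\in\tau$ implies $f_\sigma$ has a monomial representation supported in $\Lambda_\tau$''. A more direct route, which is what the paper uses implicitly, is to observe that $f_\sigma=c\prod_\chi x_\chi^{a_\chi}\vert_{\hat X_P}$ for \emph{some} decomposition $(m,\eta)=\sum_\chi a_\chi(1,\chi)$ with $\chi\in\Lambda$ (by Lemma~\ref{explicit:basis}), and that $K(\tau)$ is a face of the cone $K(P)$, so any such decomposition of an element of $K(\tau)$ already has $a_\chi=0$ for $\chi\notin\Lambda_\tau$; this uses only normality of $P$, not of its faces.
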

\begin{proof}
Let $\sigma$ be a face of $P$. If (S3) is satified by the collection of functions, then $f_\sigma$, $\sigma\in A$, 
does not vanish on $X_\sigma$, but $f_\sigma$ vanishes on $X_\tau$ for $\tau$ a proper face of $\sigma$.
A homogeneous $T$-eigenfunction in $ \mathbb K[\hat X_P]$ can be written (up to a non-zero scalar factor) as the restriction of a monomial in the 
$x_{\chi}, \chi\in \Lambda$. Now a coordinate function $x_{\chi}$ vanishes on 
$X_\sigma$ unless $\chi\in \sigma$. So $f_\sigma$ can be written as the restriction of a monomial in the $x_\chi,\chi\in\Lambda_\sigma$, 
and hence the weight $\mu_\sigma$ of $f_\sigma$ has the property: $-\mu_\sigma/\deg f_\sigma$ is an affine convex combinations of the
$\chi\in  \Lambda_\sigma$. In particular:
$\frac{-\mu_\sigma}{\deg f_\sigma}\in \sigma$. 
A face $\sigma$ is the disjoint union of the relative interiors
its faces.  So let $\tau\le \sigma$ be the unique face such that $\frac{-\mu_\sigma}{\deg f_\sigma}$ is in the relative interior
of $\tau$. If $\tau\not=\sigma$, then $f_\sigma$ must be a monomial in the $x_\chi, \chi\in  \Lambda_\tau$, and hence $f_\sigma$ is not identically
zero on $X_\tau$. So (S3) implies $\tau=\sigma$, and hence (S3) implies: 
$\frac{-\mu_\sigma}{\deg f_\sigma}\in \sigma^o$.

Vice versa, if  $\frac{-\mu_\sigma}{\deg f_\sigma}\in \sigma^o$ is an element in the relative interior of $\sigma$,
then the proof of Lemma~\ref{vanishing:set} implies (S3).
\end{proof}
\begin{proof}[Proof of Theorem~\ref{Seshadri:is:Seshadri:toric:theorem1}]
If the collection of subvarieties $X_\sigma$ and functions $f_\sigma$, $\sigma\in A$,
defines Seshadri stratification which is  $T$-equivariant  in the sense of Definition~\ref{defn:T:equivariant1}, 
then the collection of functions $f_\sigma$, $\sigma\in A$,
satisfies by Lemma~\ref{S3:equivalen:tS3strich1} also the condition \eqref{weight:condition11}.
Hence it is also a  combinatorial Seshadri stratification in the sense of Definition~\ref{defn:T_equi:SS}.

Vice versa, suppose  the collection of subvarieties $X_\sigma$ and functions $f_\sigma$, $\sigma\in A$, defines
combinatorial Seshadri stratification in the sense of Definition~\ref{defn:T_equi:SS}. So the subvarieties
are given by the $T$-orbit closures in $X_P$ and the extremal functions are $\hat T$-eigenfunctions,
hence the conditions (E1) and (E2) are satisfied.
The conditions (S1) and (S3) for a Seshadri stratification are automatically satisfied
by Lemma~\ref{condition:one:lemma1} and Lemma~\ref{S3:equivalen:tS3strich1}, and (S2) follows
by Lemma~\ref{vanishing:set}. So the collection $(X_\sigma,f_\sigma)_{\sigma\in A}$ of subvarieties and 
homogeneous functions defines a  Seshadri stratification on $X_P$ which is $T$-equivariant.
\end{proof}
\subsubsection{The valuations}
In \cite{CFL}, we use a Seshadri stratifications to define for every maximal chain $\mathfrak C$ in $A$
a valuation $\mathcal V_{\mathfrak C}:\mathbb K[\hat X_P]\setminus\{0\}\rightarrow \mathbb Q^\mathfrak C$,
using renormalized successive vanishing multiplicities. Before showing 
that the valuation $\nu_{\mathfrak C}$ defined in Definition~\ref{def:valuation} is equal
to the one defined in \cite{CFL}, we recall quickly the construction of $\mathcal V_{\mathfrak C}$ and some of its properties.
 
We add to the set $A$ the element $\{0\}$, i.e. $\hat A=A\cup\{0\}$. The variety $\hat X_0=\{0\}$
is just the origin in $V$ and hence contained in all the affine varieties $\hat X_\tau$, $\tau\in A$. 
So it makes sense to extend the partial order from $A$ to $\hat A$ by: $\tau> 0$ for all $\tau\in A$.

\subsubsection{The one-step case}\label{one:step}
Let $\tau>\sigma$ be a covering in $\hat A$. Since $\hat X_\tau$ is smooth in codimension one, we have a well defined
valuation $\mathcal V_{\tau,\sigma}: \mathbb K(\hat X_\tau)\setminus\{0\}\rightarrow \mathbb Z$, which associates to $g\in \mathbb K(\hat X_\sigma)\setminus\{0\}$
its vanishing multiplicity on $\hat X_\sigma$.

For $\tau\in A$ let $f_\tau$ be the fixed extremal function associated to $\tau$ and denote by $b_{\tau,\sigma}=\mathcal V_{\tau,\sigma}(f_\tau)$ 
the vanishing multiplicity of $f_\tau\vert_{X_\tau}$ on $\hat X_\sigma$. We associate to $g\in \mathbb K(\hat X_\tau)$ a new rational function on $\hat X_\sigma$
as follows: set
\begin{equation}\label{g:prime}
g':=\frac{g^{b_{\tau,\sigma}}}{f_\tau^{\mathcal V_{\tau,\sigma}(g)}\vert_{X_\tau}}.
\end{equation}
By construction, $g'$ is a well defined rational function on $\hat X_\tau$. 
It has been shown in \cite{CFL} (in a more general context):
\begin{lemma}[\cite{CFL}, Lemma 4.1]\label{dividing:procedure11}
The restriction $g'\vert_{\hat X_\sigma}$ is a well-defined, non-zero rational function on $\hat X_\sigma$ .
\end{lemma}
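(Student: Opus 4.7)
The plan is to reduce the assertion to a statement about the valuation at the generic point of $\hat X_\sigma$ inside $\hat X_\tau$. Axiom (S1) says that $\hat X_\sigma \subseteq \hat X_\tau$ is of codimension one and that $\hat X_\tau$ is smooth in codimension one, so the local ring $\mathcal O_{\hat X_\tau,\hat X_\sigma}$ at the generic point of $\hat X_\sigma$ is a discrete valuation ring, and its associated valuation on $\mathbb K(\hat X_\tau)^\times$ is precisely $\mathcal V_{\tau,\sigma}$. A rational function $h \in \mathbb K(\hat X_\tau)^\times$ restricts to a well-defined non-zero rational function on $\hat X_\sigma$ if and only if it is a unit in $\mathcal O_{\hat X_\tau,\hat X_\sigma}$, equivalently $\mathcal V_{\tau,\sigma}(h)=0$. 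Hence the goal reduces to establishing the identity $\mathcal V_{\tau,\sigma}(g')=0$.

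First I would record that $f_\tau$ restricts to a non-zero regular function on $\hat X_\tau$, using that $f_\tau$ does not lie in the defining ideal of $X_\tau$ (otherwise (S3) would force $\mathcal H_{f_\tau}\cap X_\tau=X_\tau$ to equal the union of its codimension-one substrata, which is impossible). Thus $b_{\tau,\sigma}=\mathcal V_{\tau,\sigma}(f_\tau\vert_{\hat X_\tau})$ is a well-defined non-negative integer, and (S3) forces $b_{\tau,\sigma}\ge 1$ because $\hat X_\sigma\subseteq \mathcal H_{f_\tau}\cap \hat X_\tau$.

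The computation of $\mathcal V_{\tau,\sigma}(g')$ is then a one-line application of the additivity of $\mathcal V_{\tau,\sigma}$:
\[
\mathcal V_{\tau,\sigma}(g')
= b_{\tau,\sigma}\,\mathcal V_{\tau,\sigma}(g) - \mathcal V_{\tau,\sigma}(g)\,\mathcal V_{\tau,\sigma}(f_\tau\vert_{\hat X_\tau})
= b_{\tau,\sigma}\mathcal V_{\tau,\sigma}(g) - \mathcal V_{\tau,\sigma}(g)\, b_{\tau,\sigma}
= 0.
\]
Consequently $g'$ is a unit in $\mathcal O_{\hat X_\tau,\hat X_\sigma}$, and its restriction to $\hat X_\sigma$ is a well-defined non-zero rational function.

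The only delicate point, and hence the main obstacle, is making precise that $\mathcal V_{\tau,\sigma}$ extends to a genuine $\mathbb Z$-valued valuation on the whole function field $\mathbb K(\hat X_\tau)^\times$, so that additivity can be applied simultaneously to the regular function $f_\tau\vert_{\hat X_\tau}$ and to the possibly non-regular $g$. This is exactly where the smooth-in-codimension-one hypothesis in (S1) is used: it guarantees that the local ring at the generic point of $\hat X_\sigma$ in $\hat X_\tau$ is a DVR, rather than some less tractable valuation ring, so that \emph{vanishing multiplicity along $\hat X_\sigma$} really is a group homomorphism $\mathbb K(\hat X_\tau)^\times \to \mathbb Z$.
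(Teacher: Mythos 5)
Your proof is correct and follows the standard line of argument. The key observation — that the one-step normalization $g'=g^{b_{\tau,\sigma}}/f_\tau^{\mathcal V_{\tau,\sigma}(g)}$ is constructed precisely so that $\mathcal V_{\tau,\sigma}(g')=0$, hence $g'$ is a unit in the DVR $\mathcal O_{\hat X_\tau,\hat X_\sigma}$ and reduces to a non-zero element of the residue field $\mathbb K(\hat X_\sigma)$ — is exactly the mechanism behind \cite[Lemma~4.1]{CFL}, which the present paper cites rather than reproves, and your identification of the smooth-in-codimension-one hypothesis in (S1) as the point that makes $\mathcal V_{\tau,\sigma}$ a genuine discrete valuation on all of $\mathbb K(\hat X_\tau)^\times$ is also the right one. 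The only point worth making explicit, which you pass over, is the transfer from the projective hypothesis in (S1) to the affine cones: $\hat X_\tau\setminus\{0\}$ is a $\mathbb G_m$-torsor over $X_\tau$ and the origin has codimension at least two in $\hat X_\tau$ (or $\hat X_\tau$ is a smooth line if $\dim X_\tau=0$), so $\hat X_\tau$ is again smooth in codimension one and $\hat X_\sigma$ is again a prime divisor; this is routine but is the step that licenses the DVR claim at the level of cones.
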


Suppose now $g\in \mathbb K(\hat X_\tau)$ is a $\hat T$-eigenfunction of weight $\lambda_g$. Recall that $f_\tau$
is a $\hat T$-eigenfunction, denote by $\mu_\tau$ its character. As a quotient of $\hat T$-eigenfunctions, the function
$g'$ itself is a $\hat T$-eigenfunction. By construction we see:
\begin{lemma}\label{weight:lemma}
If $g\in \mathbb K(\hat X_\tau)$ is a $\hat T$-eigenfunction of character $\lambda_g$, then
$g'\vert_{\hat X_\sigma}\in \mathbb K(\hat X_\sigma)$ is a $\hat T$-eigenfunction of character 
$\lambda_{g'}=b_{\tau,\sigma}\lambda_g - \mathcal V_{\tau,\sigma}(g)\mu_\tau$.
\end{lemma}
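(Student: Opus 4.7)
The plan is to compute the $\hat T$-weight of $g'$ directly from the definition \eqref{g:prime} and then transfer this information to the restriction $g'\vert_{\hat X_\sigma}$ using the fact that $\hat X_\sigma$ is a $\hat T$-stable subvariety of $\hat X_\tau$.

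First I would observe that the $\hat T$-action on $\mathbb K(\hat X_\tau)$ is multiplicative: if $h_1,h_2\in\mathbb K(\hat X_\tau)^*$ are $\hat T$-eigenfunctions of weights $\eta_1,\eta_2$, then $h_1h_2$ and $h_1/h_2$ are eigenfunctions of weights $\eta_1+\eta_2$ and $\eta_1-\eta_2$ respectively, and $h_1^n$ has weight $n\eta_1$ for any $n\in\mathbb Z$. Since $g$ is an eigenfunction of weight $\lambda_g$, the power $g^{b_{\tau,\sigma}}$ is an eigenfunction of weight $b_{\tau,\sigma}\lambda_g$. Likewise, since $f_\tau$ is an eigenfunction of weight $\mu_\tau$ on $\hat X_P$, its restriction $f_\tau\vert_{\hat X_\tau}$ remains a $\hat T$-eigenfunction of weight $\mu_\tau$ (because $\hat X_\tau$ is $\hat T$-stable), and therefore $f_\tau^{\mathcal V_{\tau,\sigma}(g)}\vert_{\hat X_\tau}$ is a $\hat T$-eigenfunction of weight $\mathcal V_{\tau,\sigma}(g)\mu_\tau$.

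Next, taking the quotient as in \eqref{g:prime}, $g'\in\mathbb K(\hat X_\tau)$ is a $\hat T$-eigenfunction of weight
\[
b_{\tau,\sigma}\lambda_g - \mathcal V_{\tau,\sigma}(g)\mu_\tau.
\]
Finally, by Lemma~\ref{dividing:procedure11}, the restriction $g'\vert_{\hat X_\sigma}$ is a well-defined non-zero rational function on $\hat X_\sigma$. Since $\hat X_\sigma\subseteq \hat X_\tau$ is $\hat T$-stable, restriction commutes with the $\hat T$-action, hence $g'\vert_{\hat X_\sigma}$ is a $\hat T$-eigenfunction of the same weight $\lambda_{g'}=b_{\tau,\sigma}\lambda_g - \mathcal V_{\tau,\sigma}(g)\mu_\tau$.

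There is no real obstacle here: the statement is essentially a bookkeeping assertion that weights add under products, subtract under quotients, and are preserved under restriction to $\hat T$-stable subvarieties. The only non-formal input is Lemma~\ref{dividing:procedure11}, which ensures that $g'\vert_{\hat X_\sigma}$ is non-zero (so that the notion of $\hat T$-weight is unambiguous) and is already proved in \cite{CFL}.
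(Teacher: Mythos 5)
Your proof is correct and follows the same route as the paper, which simply notes that $g'$ is a quotient of $\hat T$-eigenfunctions and reads off the weight from the definition of $g'$. Your only addition is the explicit appeal to Lemma~\ref{dividing:procedure11} and the $\hat T$-stability of $\hat X_\sigma$ to justify that the restriction is a nonzero eigenfunction of the same weight, which the paper leaves implicit.
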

\subsubsection{The valuation associated to a maximal chain $\mathfrak C\subseteq A$}\label{valuation:maximal:chain1}
Let $\mathfrak C:\tau_r=P>\tau_{r-1}>\ldots>\tau_0$ be a maximal chain in $A$. We endow $\mathbb Q^\mathfrak C$ with the associated
lexicographic order and define a $\mathbb Q^\mathfrak C$-valued valuation
on $\mathbb K(\hat X_P)$ as follows:

To simplify the notation, we write just $\mathcal V_i$ instead of $\mathcal V_{\tau_i,\tau_{i-1}}$
for the valuation associated  to the cover $\tau_i>\tau_{i-1}$ in $A$. 
The element $\tau_0$ is a minimal element in $A$, we write $\nu_0$ for the valuation
given by the vanishing multiplicity of a rational function $g\in\mathbb K(\mathbb A^1)\setminus\{0\}$ in the origin.
We simplify in the same way the notation for the vanishing multiplicity $b_{\tau_i,\tau_{i-1}}$ of $f_{\tau_i}\vert_{\hat X_{\tau_i}}$
on $\hat X_{\tau_{i-1}}$, we write just $b_i$ instead.

We associate to a rational function $g\in \mathbb K(\hat X_P)$ a sequence of rational functions on the subvarieties corresponding to the elements 
in the fixed maximal chain: $g_{\mathfrak C}=(g_r,\ldots,g_1,g_0)$, where $g_r=g$,
and then we repeat the procedure in Lemma~\ref{dividing:procedure11}:
$g_{r-1}=g_r'\vert_{X_{\tau_{r-1}}}$, $g_{r-2}=g_{r-1}'\vert_{X_{\tau_{r-2}}}$, $g_{r-3}=g_{r-2}'\vert_{X_{\tau_{r-3}}}$ and so on.

\begin{definition}
Let $\{e_{\tau_i}\mid i=0,\ldots,r\}$ be the standard basis for $\mathbb Q^\mathfrak C$. We set:
\begin{equation}\label{valuation:C}
\mathcal V_{\mathfrak C}:  \mathbb K(\hat X_P)\setminus\{0\} \rightarrow \mathbb Q^\mathfrak C,\quad
g\mapsto \frac{\mathcal V_r(g_r)}{b_r}e_{\tau_r} + \frac{\mathcal V_{r-1}(g_{r-1})}{b_rb_{r-1}}e_{\tau_{r-1}} 
+\ldots + \frac{\mathcal V_{0}(g_{0})}{b_r\cdots b_{0}}e_{\tau_{0}}.
\end{equation}
\end{definition}
It has been proved in \cite{CFL}:
\begin{proposition}[\cite{CFL}, Proposition 6.10]
The map $\mathcal V_{\mathfrak C}$ is a $\mathbb Q^\mathfrak C$-valued valuation.
\end{proposition}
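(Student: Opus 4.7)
My plan is to verify the four valuation axioms for $\mathcal V_{\mathfrak C}$ by induction on the length $r$ of $\mathfrak C=(\tau_r,\ldots,\tau_0)$. The base case $r=0$ reduces to the vanishing-order valuation on $\mathbb K(\hat X_{\tau_0})$ at the origin. Well-definedness of the whole sequence $g_r,g_{r-1},\ldots,g_0$ follows at once from iterated application of Lemma~\ref{dividing:procedure11}. For the inductive step I will restrict the Seshadri stratification to $X_{\tau_{r-1}}$ via Remark~\ref{Compatible:subvariety}, so that the truncated chain $\mathfrak C'=(\tau_{r-1},\ldots,\tau_0)$ already carries a valuation $\mathcal V_{\mathfrak C'}$ on $\mathbb K(\hat X_{\tau_{r-1}})\setminus\{0\}$. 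The formula \eqref{valuation:C} then reads $\mathcal V_{\mathfrak C}(g)=\tfrac{\mathcal V_r(g)}{b_r}e_{\tau_r}+\tfrac{1}{b_r}\mathcal V_{\mathfrak C'}(g'|_{\hat X_{\tau_{r-1}}})$, and scalar invariance is automatic because $\mathcal V_r(c)=0$ and $(cg)'=c^{b_r}g'$ for $c\in\mathbb K^*$.

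Multiplicativity $\mathcal V_{\mathfrak C}(gh)=\mathcal V_{\mathfrak C}(g)+\mathcal V_{\mathfrak C}(h)$ will follow from the observation that the dividing procedure of \eqref{g:prime} is itself multiplicative. Indeed, additivity of the DVR valuation $\mathcal V_r$ gives
\[
(gh)'=\frac{(gh)^{b_r}}{f_{\tau_r}^{\mathcal V_r(g)+\mathcal V_r(h)}}=\frac{g^{b_r}}{f_{\tau_r}^{\mathcal V_r(g)}}\cdot\frac{h^{b_r}}{f_{\tau_r}^{\mathcal V_r(h)}}=g'\cdot h',
\]
restriction to $\hat X_{\tau_{r-1}}$ respects products, and then inductive additivity of $\mathcal V_{\mathfrak C'}$ finishes the argument.

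The hard part will be the lex-minimum property for sums in the tie-breaking case $\mathcal V_r(g)=\mathcal V_r(h)=\mathcal V_r(g+h)=m$, since the dividing procedure raises to a $b_r$-th power and so $(g+h)'\ne g'+h'$ in general. I will handle this by binomial expansion:
\[
(g+h)'|_{\hat X_{\tau_{r-1}}}=g'|_{\hat X_{\tau_{r-1}}}+h'|_{\hat X_{\tau_{r-1}}}+\sum_{i=1}^{b_r-1}\binom{b_r}{i}\bigl[g^ih^{b_r-i}f_{\tau_r}^{-m}\bigr]|_{\hat X_{\tau_{r-1}}}.
\]
Each cross term has $\mathcal V_r$-value exactly $0$, so its restriction is a nonzero rational function on $\hat X_{\tau_{r-1}}$. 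Using multiplicativity of $\mathcal V_{\mathfrak C'}$ (just established) together with the elementary interpolation $i\alpha+(b_r-i)\beta\ge b_r\min(\alpha,\beta)$, the $\mathcal V_{\mathfrak C'}$-value of each cross term is bounded below by $\min(\mathcal V_{\mathfrak C'}(g'|_{\hat X_{\tau_{r-1}}}),\mathcal V_{\mathfrak C'}(h'|_{\hat X_{\tau_{r-1}}}))$, so the inductive minimum property of $\mathcal V_{\mathfrak C'}$ transports to $\mathcal V_{\mathfrak C}$. When the leading coordinates instead differ, the lex order settles the inequality immediately, which completes the plan.
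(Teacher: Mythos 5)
Your plan has the right architecture: induction on the length of $\mathfrak C$, with the recursive identity $\mathcal V_{\mathfrak C}(g)=\tfrac{\mathcal V_r(g)}{b_r}e_{\tau_r}+\tfrac{1}{b_r}\mathcal V_{\mathfrak C'}(g'|_{\hat X_{\tau_{r-1}}})$ inherited from the telescoping definition \eqref{valuation:C}. Well-definedness via iterated application of Lemma~\ref{dividing:procedure11}, scalar invariance, and multiplicativity are all handled correctly; in particular $(gh)'=g'h'$ follows cleanly from additivity of $\mathcal V_r$, and the treatment of the tie-breaking case $\mathcal V_r(g)=\mathcal V_r(h)=\mathcal V_r(g+h)=m$ via binomial expansion, the identity $\bigl(g^ih^{b_r-i}f_{\tau_r}^{-m}\bigr)^{b_r}=(g')^i(h')^{b_r-i}$, multiplicativity of $\mathcal V_{\mathfrak C'}$, interpolation, and the inductive minimum property is correct. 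Since the paper cites this as \cite[Prop.\ 6.10]{CFL} and does not reproduce the argument, I cannot compare line by line, but this inductive approach is natural and in the same spirit.

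There is, however, a genuine gap in your closing sentence. The claim that ``when the leading coordinates instead differ, the lex order settles the inequality immediately'' is false in the subcase $\mathcal V_r(g)\neq\mathcal V_r(h)$. Say $\mathcal V_r(g)=m<\mathcal V_r(h)$. Then $\mathcal V_r(g+h)=m$, so $\mathcal V_{\mathfrak C}(g+h)$ and the lex minimum $\min\bigl(\mathcal V_{\mathfrak C}(g),\mathcal V_{\mathfrak C}(h)\bigr)=\mathcal V_{\mathfrak C}(g)$ \emph{agree} in the leading coordinate, and the lexicographic comparison is undecided -- you must still inspect the tail. The only case the lex order truly settles at once is $\mathcal V_r(g)=\mathcal V_r(h)<\mathcal V_r(g+h)$. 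To close the gap one expands $(g+h)'=(g+h)^{b_r}f_{\tau_r}^{-m}$ binomially as before; now every term $g^ih^{b_r-i}f_{\tau_r}^{-m}$ with $i<b_r$ has $\mathcal V_r$-value $im+(b_r-i)\mathcal V_r(h)-mb_r>0$ and therefore lies in the maximal ideal of $\mathcal O_{\hat X_{\tau_r},\hat X_{\tau_{r-1}}}$ and restricts to $0$, while the $i=b_r$ term is $g'$ with $\mathcal V_r$-value $0$. Hence $(g+h)'|_{\hat X_{\tau_{r-1}}}=g'|_{\hat X_{\tau_{r-1}}}$, so the entire downstream sequences of $g+h$ and $g$ coincide and $\mathcal V_{\mathfrak C}(g+h)=\mathcal V_{\mathfrak C}(g)=\min\bigl(\mathcal V_{\mathfrak C}(g),\mathcal V_{\mathfrak C}(h)\bigr)$. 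This is short and in the same spirit as your tie-breaking argument, but it cannot be waved away as ``immediate.''
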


\begin{remark}\label{iterativ:formula:111}
Using an inductive procedure, one gets the following formula for $j=0,\ldots, r-1$: {\small
$$
g_{j}=g^{b_{j+1}}_{j+1}f^{-\nu_{j+1}(g_{j+1})}_{\tau_{j+1}}\vert_{\hat X_{\tau_{j}}}
=\ldots=
g^{b_r\cdots b_{j+1}}f_{\tau_r}^{-b_{r-1}\cdots b_{j+1}\nu_r(g_r)}\cdots f^{-b_{j+1}\nu_{j+2}(g_{j+2})}_{\tau_{j+2}} 
f^{-\nu_{j+1}(g_{j+1})}_{\tau_{j+1}}\vert_{\hat X_{\tau_{j}}}.
$$}
\end{remark} 

Our aim is to show that the two valuations $\mathcal V_{\mathfrak C}$ and $\nu_{\mathfrak C}$ coincide. 
The following Lemma is a first step:
\begin{lemma}\label{T-eigenfunction:equality}
If $g\in \mathbb K(\hat X_\tau)$ is a $\hat T$-eigenfunction of character $\lambda_g$ and $\mathcal V_{\mathfrak C}(g)=(a_r,\ldots,a_0)$, 
then $\lambda_g=a_r\mu_{\sigma_r}+\ldots+a_0\mu_{\sigma_0}$. In particular, $\mathcal V_{\mathfrak C}(g)=\nu_{\mathfrak C}(g)$.
\end{lemma}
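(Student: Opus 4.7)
The plan is to track $\hat T$-characters along the iterative construction defining $\mathcal V_{\mathfrak C}$ and to identify the coefficients by comparing bases. Write $\mathfrak C:\tau_r>\cdots>\tau_0$. Starting from $g_r=g$ with character $\lambda_g$ and applying Lemma~\ref{weight:lemma} at each covering relation, an easy induction (equivalently, a direct reading of the explicit formula for $g_j$ in Remark~\ref{iterativ:formula:111}) will show that for $j=0,1,\ldots,r-1$,
$$
\lambda_{g_j} \;=\; \bigl(b_{j+1}b_{j+2}\cdots b_r\bigr)\,\lambda_g \;-\; \sum_{i=j+1}^{r}\bigl(b_{j+1}\cdots b_{i-1}\bigr)\,\mathcal V_i(g_i)\,\mu_{\tau_i},
$$
with the convention that the empty product equals $1$.

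Next I handle the base case. Since $\tau_0$ is a vertex $\{v\}$, the affine variety $\hat X_{\tau_0}=\mathbb K e_v$ is a line with coordinate $x_v$, which is a $\hat T$-eigenfunction of weight $-(1,v)$. By Lemma~\ref{S3:equivalen:tS3strich1} (equivalently, from the condition $-\mu_{\tau_0}/\deg f_{\tau_0}=(1,v)$) the restriction $f_{\tau_0}\vert_{\hat X_{\tau_0}}$ is a nonzero scalar multiple of $x_v^{\deg f_{\tau_0}}$, so $b_0=\mathcal V_0(f_{\tau_0})=\deg f_{\tau_0}$. For any nonzero $\hat T$-eigenfunction $g_0\in \mathbb K(\hat X_{\tau_0})^\times$, writing $g_0=c\,x_v^{\mathcal V_0(g_0)}$ yields
$$
\lambda_{g_0} \;=\; \mathcal V_0(g_0)\cdot\bigl(-(1,v)\bigr) \;=\; \tfrac{\mathcal V_0(g_0)}{b_0}\,\mu_{\tau_0}.
$$
Setting $j=0$ in the iterative formula, substituting this expression for $\lambda_{g_0}$ and solving for $\lambda_g$, I obtain
$$
\lambda_g \;=\; \tfrac{\mathcal V_0(g_0)}{b_0 b_1 \cdots b_r}\,\mu_{\tau_0} \;+\; \sum_{i=1}^{r} \tfrac{\mathcal V_i(g_i)}{b_i b_{i+1}\cdots b_r}\,\mu_{\tau_i}.
$$
A term-by-term comparison with formula~\eqref{valuation:C} identifies the coefficient of $\mu_{\tau_i}$ with the $\tau_i$-entry $a_i$ of $\mathcal V_{\mathfrak C}(g)$, establishing the claimed character identity $\lambda_g=\sum_i a_i \mu_{\tau_i}$.

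The ``in particular'' statement is then immediate. By Lemma~\ref{weights:lin:indep} the family $(\mu_{\tau_i})_{i=0}^{r}$ is a $\mathbb Q$-basis of $\hat M_{\mathbb Q}$, so the $a_i$ are uniquely determined by $\lambda_g$; as this basis-expansion is precisely the definition of $\nu_{\mathfrak C}$ on $\hat T$-eigenfunctions (Definition~\ref{def:valuation}), one concludes $\mathcal V_{\mathfrak C}(g)=\nu_{\mathfrak C}(g)$. The main technical point, and the only step that is not a formal bookkeeping of characters, is the base case: aligning the vanishing order at the origin of $\hat X_{\tau_0}\simeq \mathbb A^1$ with the weight combinatorics, i.e. verifying $b_0=\deg f_{\tau_0}$ and $\lambda_{g_0}=\tfrac{\mathcal V_0(g_0)}{b_0}\mu_{\tau_0}$; everything else is a straightforward iteration of Lemma~\ref{weight:lemma} and a comparison of coefficients.
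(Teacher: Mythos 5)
Your proof is correct and follows the same strategy as the paper: iterate the character recursion $\lambda_{g_{j-1}}=b_j\lambda_{g_j}-\mathcal V_j(g_j)\mu_{\tau_j}$ from Lemma~\ref{weight:lemma} along the chain and compare coefficients against formula~\eqref{valuation:C}. The only real difference is bookkeeping: you solve the recursion from the bottom ($\lambda_{g_0}$) back up to $\lambda_g$ and handle the base case with an explicit calculation on the line $\hat X_{\tau_0}\simeq\mathbb A^1$, whereas the paper expands top-down and lets the last step be absorbed by the extension to $\hat A=A\cup\{0\}$ (where the cover $\tau_0>0$ gives $0=b_0\lambda_{g_0}-\mathcal V_0(g_0)\mu_{\tau_0}$, since $\mathbb K(\hat X_0)=\mathbb K$ carries the trivial character). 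Your explicit verification that $b_0=\deg f_{\tau_0}$ and $\lambda_{g_0}=\tfrac{\mathcal V_0(g_0)}{b_0}\mu_{\tau_0}$ is exactly the content that the paper's terse ``so inductively we get'' leaves implicit, so this is a welcome clarification rather than a different proof.
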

\begin{proof}
We know by Lemma~\ref{weight:lemma}  that $g=g_r$, $\ldots$, $g_0$ are $\hat T$-eigenfunctions, and the corresponding characters
can be calculated by the formula $\lambda_{g_{j-1}}=b_{j}\lambda_{g_j} - \mathcal V_{j}(g_j)\mu_{\tau_j}$. So inductively we get:
$$
\lambda_g=\lambda_{g_r}=  \frac{\mathcal V_{r}(g_r)}{b_r}\mu_{\tau_r} + \frac{\lambda_{g_{r-1}}}{b_r}
=\ldots = a_r\mu_{\tau_r}+\ldots + a_0\mu_0,
$$
which finishes the proof.
\end{proof}
The next step is to reduce the problem to the case of $\hat T$-eigenfunctions.

\begin{lemma}\label{T-invariance:lemma:11}
\begin{itemize}
\item[{\it i)}] $\mathcal V_\mathfrak C$ is $\hat T$-invariant, i.e. $\mathcal V_\mathfrak C(g)=\mathcal V_\mathfrak C(t\cdot g)$
for $g\in \mathbb K(\hat X_P)\setminus\{0\}$ and $t\in \hat T$.
\item[{\it ii)}] For $g\in \mathbb{K}[\hat X_P]\setminus\{0\}$ let $g=g_{\eta_1}+\ldots + g_{\eta_q}$ be a decomposition of $g$ as a linear combination 
of $\hat T$-eigenfunctions $g_{\eta_i}$, $\eta_i\not=\eta_j$ for all $i\not=j$, $\eta_i\in \hat M$, $i=1,\ldots,q$. 
Then $\mathcal V_\mathfrak C(g)=\min\{\mathcal V_\mathfrak C(g_{\eta_j})\mid j=1,\ldots,q\}$.
\end{itemize}
\end{lemma}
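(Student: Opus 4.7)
The plan for (i) is a descending induction on $j$ showing that $(t\cdot g)_j=c_j(t)\,t\cdot g_j$ in $\mathbb K(\hat X_{\tau_j})$ for some scalar $c_j(t)\in\mathbb K^\ast$. The base case $j=r$ is trivial with $c_r(t)=1$. For the inductive step two ingredients come in: first, since $\hat X_{\tau_{j-1}}$ is $\hat T$-stable inside $\hat X_{\tau_j}$, the one-step valuation $\mathcal V_j$ is invariant under the induced $\hat T$-action on $\mathbb K(\hat X_{\tau_j})^\ast$, giving $\mathcal V_j((t\cdot g)_j)=\mathcal V_j(g_j)$; second, $f_{\tau_j}$ is a $\hat T$-eigenfunction of weight $\mu_{\tau_j}$, so $t$ acts on it by the nonzero scalar $\mu_{\tau_j}(t)$. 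Substituting both facts into the definition \eqref{g:prime} of $((t\cdot g)_j)'$ and restricting to $\hat X_{\tau_{j-1}}$ produces a new nonzero scalar $c_{j-1}(t)$, closing the induction. Since the coefficients in \eqref{valuation:C} depend on $g$ only through the integers $\mathcal V_j(g_j)$, which are unchanged when $g$ is replaced by $t\cdot g$, the equality $\mathcal V_\mathfrak C(t\cdot g)=\mathcal V_\mathfrak C(g)$ follows.

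For (ii) I would not use (i). The crucial input is instead Lemma~\ref{T-eigenfunction:equality}: for a $\hat T$-eigenfunction $g_\eta$ of weight $\eta$, if $\mathcal V_\mathfrak C(g_\eta)=(a_r,\ldots,a_0)$ then $\eta=\sum_i a_i\mu_{\tau_i}$. Since $\{\mu_{\tau_0},\ldots,\mu_{\tau_r}\}$ is a $\mathbb Q$-basis of $\hat M_\mathbb Q$ by Lemma~\ref{weights:lin:indep}, the assignment $g_\eta\mapsto\mathcal V_\mathfrak C(g_\eta)$ is injective on eigenfunctions of distinct weights. As the $\eta_i$ in the given decomposition are pairwise distinct, the values $\mathcal V_\mathfrak C(g_{\eta_1}),\ldots,\mathcal V_\mathfrak C(g_{\eta_q})$ are then pairwise distinct elements of the totally ordered group $\mathbb Q^\mathfrak C$, so the minimum among them is attained at a unique index.

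The statement of (ii) then follows from the standard valuation principle that a valuation is additive on a sum whose minimum is attained uniquely. Relabeling so that $\mathcal V_\mathfrak C(g_{\eta_1})$ is the strict minimum, the min-property gives the lower bound $\mathcal V_\mathfrak C(g)\ge\mathcal V_\mathfrak C(g_{\eta_1})$, while a strict inequality here would, after rewriting $g_{\eta_1}=g-\sum_{j\ge 2}g_{\eta_j}$ and applying the min-property a second time, force the contradiction $\mathcal V_\mathfrak C(g_{\eta_1})>\mathcal V_\mathfrak C(g_{\eta_1})$. The only delicate point will be the bookkeeping in (i): one must verify that the accumulated $\hat T$-character corrections remain in $\mathbb K^\ast$ throughout the descent, but this is transparent from the iterated expression of Remark~\ref{iterativ:formula:111}, where each correction is a monomial in the nonzero scalars $\mu_{\tau_k}(t)$.
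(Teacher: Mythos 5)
Your proof of part (i) is essentially the paper's own argument: both proceed by tracking the sequence $(g_r,\ldots,g_0)$ and observing, via the $\hat T$-stability of each divisor $\hat X_{\tau_{j-1}}\subset\hat X_{\tau_j}$ and the $\hat T$-eigenfunction property of the $f_{\tau_j}$, that the sequence attached to $t\cdot g$ agrees with the $t$-twist of the sequence attached to $g$ up to nonzero scalars (the paper invokes Remark~\ref{iterativ:formula:111} in closed form, you do a descending induction — same content).

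For part (ii), however, you take a genuinely different route and it is worth comparing. The paper \emph{uses} (i): it picks $t\in\hat T$ separating the finitely many distinct characters $\eta_j$, notes that $\langle g_{\eta_1},\ldots,g_{\eta_q}\rangle=\langle g, t\cdot g,\ldots,t^{q-1}\cdot g\rangle$ by a Vandermonde argument, and then applies the minimum property plus $\hat T$-invariance to each $g_{\eta_j}$ expressed as a combination of the $t^i\cdot g$. Your argument bypasses (i) entirely: you invoke Lemma~\ref{T-eigenfunction:equality} together with Lemma~\ref{weights:lin:indep} to see that $\mathcal V_\mathfrak C$ is injective on $\hat T$-eigenfunctions of distinct weights, so the minimum among $\mathcal V_\mathfrak C(g_{\eta_1}),\ldots,\mathcal V_\mathfrak C(g_{\eta_q})$ is uniquely attained, and then apply the standard fact that a valuation attains the minimum on a sum whose summands have pairwise-comparable, uniquely-minimized values. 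Both are correct and non-circular (Lemma~\ref{T-eigenfunction:equality} is proved before the statement at hand and does not rely on it). Your version is shorter and makes transparent \emph{why} $\mathcal V_\mathfrak C$ behaves like $\nu_\mathfrak C$ — both are determined on eigenfunctions by the weight and both take minima over eigencomponents — whereas the paper's Vandermonde argument is more robust: it establishes (ii) for \emph{any} $\hat T$-invariant valuation on a rational $\hat T$-module, without needing the specific weight-recovery property of $\mathcal V_\mathfrak C$. One small caution: you describe the principle as the valuation being ``additive'' on such a sum; the correct statement is that $\mathcal V_\mathfrak C(\sum g_{\eta_j})$ \emph{equals} the unique minimum $\mathcal V_\mathfrak C(g_{\eta_{j_0}})$, which is what your two-sided argument actually proves — the wording is slightly off but the reasoning is sound.
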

Lemma~\ref{T-eigenfunction:equality}, the second part of Lemma~\ref{T-invariance:lemma:11} together with Definition~\ref{def:valuation} implies:
\begin{corollary}\label{max:chain:valuation:equal}
For all $g\in \mathbb K[\hat X_P]\setminus\{0\}$ and $\mathfrak C\in \mathcal F_{\max}(A)$ one has:
$\mathcal V_\mathfrak C(g)=\nu_\mathfrak C(g).$
\end{corollary}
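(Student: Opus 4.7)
The plan is to handle the two parts in succession, with the crucial input being that, because $\hat X_{\tau_{j-1}}$ is a $\hat T$-stable prime divisor in $\hat X_{\tau_j}$, the one-step order-of-vanishing valuation $\mathcal V_j$ is $\hat T$-invariant on $\mathbb K(\hat X_{\tau_j})\setminus\{0\}$; that is, $\mathcal V_j(t\cdot h)=\mathcal V_j(h)$ for all $t\in\hat T$ and $h$. This is a standard fact: the divisorial valuation attached to a $G$-stable prime divisor is $G$-invariant for any algebraic group $G$ acting on the ambient variety.

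For (i), I would proceed by downward induction on $j=r,r-1,\ldots,0$ and show that $(t\cdot g)_j=c_j(t)\,(t\cdot g_j)$ for some $c_j(t)\in\mathbb K^*$. The base case $j=r$ is tautological. For the inductive step I invoke the iterative formula of Remark~\ref{iterativ:formula:111}, together with the fact that $f_{\tau_{j+1}}$ is a $\hat T$-eigenfunction of weight $\mu_{\tau_{j+1}}$ and that $\mathcal V_{j+1}((t\cdot g)_{j+1})=\mathcal V_{j+1}(c_{j+1}(t)(t\cdot g_{j+1}))=\mathcal V_{j+1}(g_{j+1})$ by the inductive hypothesis, scalar-invariance, and $\hat T$-invariance of $\mathcal V_{j+1}$. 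A direct computation then shows the two sides differ only by the non-zero scalar $c_{j+1}(t)^{b_{j+1}}\mu_{\tau_{j+1}}(t)^{-\mathcal V_{j+1}(g_{j+1})}$, establishing the inductive claim. Since each $\mathcal V_j$ is both $\hat T$-invariant and unaffected by non-zero scalar multiplication, we conclude that $\mathcal V_j((t\cdot g)_j)=\mathcal V_j(g_j)$ for every $j$, and summing the contributions in \eqref{valuation:C} yields $\mathcal V_\mathfrak C(t\cdot g)=\mathcal V_\mathfrak C(g)$.

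For (ii) I rely on the standard valuation lemma: in any totally ordered value group, if $v(a)\neq v(b)$, then $v(a+b)=\min\{v(a),v(b)\}$; hence if $v(f_1),\ldots,v(f_q)$ are pairwise distinct, an easy induction on $q$ yields $v(f_1+\cdots+f_q)=\min_j v(f_j)$. It therefore suffices to show that the values $\mathcal V_\mathfrak C(g_{\eta_1}),\ldots,\mathcal V_\mathfrak C(g_{\eta_q})$ are pairwise distinct. This is where Lemma~\ref{T-eigenfunction:equality} enters: for a $\hat T$-eigenfunction $h$ of weight $\lambda_h$, if $\mathcal V_\mathfrak C(h)=\sum_i a_i e_{\tau_i}$, then $\lambda_h=\sum_i a_i\mu_{\tau_i}$. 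Since the $\mu_{\tau_0},\ldots,\mu_{\tau_r}$ form a $\mathbb Q$-basis of $\hat M_\mathbb Q$ by Lemma~\ref{weights:lin:indep}, the weight is recoverable from the valuation, so distinct weights $\eta_j$ force distinct valuation values. The minimum formula then follows from the valuation identity above.

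The main obstacle I expect is the scalar bookkeeping in part (i): the iterative definition of $g_j$ involves $b_{j+1}$-th powers and restrictions to $\hat T$-stable subvarieties, and one must verify carefully at each step that $t\cdot g_j$ and $(t\cdot g)_j$ differ only by a non-zero scalar (not by an honestly different rational function), which is precisely where the $\hat T$-invariance of $\mathcal V_{j+1}$ and the eigenfunction property of $f_{\tau_{j+1}}$ combine. Once (i) is in place, (ii) is an essentially formal consequence of Lemma~\ref{T-eigenfunction:equality} and the basic valuation identity just described.
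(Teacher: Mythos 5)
Your proof is correct, and for the key ``minimum decomposition'' step it takes a genuinely different and in fact cleaner route than the paper. The paper establishes $\mathcal V_{\mathfrak C}(g)=\min_j\mathcal V_{\mathfrak C}(g_{\eta_j})$ (part (ii) of Lemma~\ref{T-invariance:lemma:11}) by first proving the $\hat T$-invariance of $\mathcal V_{\mathfrak C}$ (part (i)) and then running a Vandermonde-type argument: choose $t\in\hat T$ separating the characters, observe that $\langle g_{\eta_1},\ldots,g_{\eta_q}\rangle_{\mathbb K}=\langle g,t\cdot g,\ldots,t^{q-1}\cdot g\rangle_{\mathbb K}$, and use (i) to bound $\mathcal V_{\mathfrak C}(g_{\eta_j})$ from below. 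You instead observe that, by Lemma~\ref{T-eigenfunction:equality}, the weight of an eigenfunction is recoverable from its $\mathcal V_{\mathfrak C}$-value, so eigenfunctions of pairwise distinct $\hat T$-weights automatically have pairwise distinct $\mathcal V_{\mathfrak C}$-values, and the minimum formula then drops out of the elementary fact that a valuation realizes the minimum on a sum whenever the summands have pairwise distinct values. This bypasses part (i) entirely: your induction establishing $\mathcal V_{\mathfrak C}(t\cdot g)=\mathcal V_{\mathfrak C}(g)$ matches the paper's argument for (i), but it ends up being unused in your chain of reasoning and could be dropped. One small remark: once you have the minimum formula, you should still explicitly close the loop by invoking Lemma~\ref{T-eigenfunction:equality} termwise (to convert each $\mathcal V_{\mathfrak C}(g_{\eta_j})$ into $\nu_{\mathfrak C}(g_{\eta_j})$) together with Definition~\ref{def:valuation} (which defines $\nu_{\mathfrak C}(g)$ as the same minimum); this final deduction is implicit in your writeup but is the actual content of the corollary.
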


\begin{proof}[Proof of  Lemma~\ref{T-invariance:lemma:11}] 
The action of $\hat T$ stabilizes the divisor $\hat X_{\tau_{j-1}}$ in $\hat X_{\tau_j}$ for $j=1,\ldots,r$. The associated
algebra automorphisms stabilizes hence the associated maximal ideal in the local ring 
$\mathcal O_{\hat X_{\tau_j},\hat X_{\tau_{j-1}}}\subset \mathbb K(\hat X_{\tau_{j}})$ for
$j=1,\ldots,r$. So for all $g\in \mathbb K(\hat X_{{\sigma_j}})$: the vanishing multiplicities of $g$ respectively $t\cdot g$
on $\hat X_{\sigma_{j-1}}$ are the same.

To prove \textit{i)}, 
for $g\in \mathbb K[\hat X_P]$ let $g_\mathfrak C=(g_r,\ldots,g_0)$ be the associated sequence of rational functions.
We can use the $\hat T$-action to construct two new tuples: for $t\in \hat T$ consider the $t$-twisted
tuple $(t\cdot g_r,\ldots,t\cdot  g_0)$ obtained by twisting component-wise each of the rational functions 
in the sequence associated to $g$. And we have the sequence 
$(g'_r,\ldots,g'_0)$ associated to the function $t\cdot g$.

The functions $\{f_{\sigma}\mid \sigma\in A\}$ are $T$-eigenfunctions, so by Remark~\ref{iterativ:formula:111}
the $j$-th component $t\cdot g_j$ in the $t$-twisted sequence 
and the $j$-th component $g'_j$ in  the sequence associated to $t\cdot g$ differ only by a nonzero scalar multiple.
It follows: $\mathcal V_{\mathfrak C}(t\cdot  g)=\mathcal V_{\mathfrak C}(g)$ for
$g \in \mathbb{K}[\hat X_P]\setminus\{0\}$.

To prove \textit{ii)}, fix $g \in \mathbb{K}[\hat X_P]\setminus\{0\}$ and let $g=g_{\eta_1}+\ldots + g_{\eta_q}$  be a decomposition
into $\hat T$-eigenfunctions, we suppose the characters are pairwise different.
We know: $\mathcal V_{\mathfrak C}(g)\ge \min\{\mathcal V_{\mathfrak C}(g_{\eta_j})\mid 1\leq j\leq q \}$
by the minimum property of a valuation. The assumption on the characters to be pairwise different
implies that one find $t\in\hat T$ such that $t\cdot g_{\eta_j}=c_jg_{\eta_j}$ for  
pairwise distinct $c_1,\ldots,c_q\in\mathbb{K}^*$. It follows that 
the linear span of the following functions coincide:
$$
\langle g_{\eta_1},\ldots ,g_{\eta_q}\rangle_{\mathbb K} =\langle g,t\cdot g,\ldots,t^{q-1}\cdot g\rangle_{\mathbb K}.
$$
So one can express the $\hat T$-eigenfunction $g_{\eta_j}$ as a linear combination of  
$g, t\cdot g, \ldots, t^{q-1}\cdot g$. 
Now  part \textit{i)} of Lemma~\ref{T-invariance:lemma:11}
implies: for all $j=1,\ldots,q$,
$$
\mathcal V_{\mathfrak C}(g_{\eta_j})\ge  \min\{\mathcal V_{\mathfrak C}({t^i}\cdot g)\mid i=0,\ldots, q-1\}
=\mathcal V_{\mathfrak C}(g),
$$
 and hence $\mathcal V_{\mathfrak C}(g)=\min\{\mathcal V_{\mathfrak C}(g_{\eta_j})\mid j=1,\ldots,q\}$.
\end{proof}
\subsection{The quasi-valuations}
In \cite{CFL} we have used the valuations $\mathcal V_{\mathfrak C}$ to define a quasi-valuation. Rephrased 
in terms of an embedded toric variety $X_P\hookrightarrow \mathbb P(V)$ the definition in \cite{CFL} reads as:
\begin{definition}\label{def:quasivaluation2}
The \emph{quasi-valuation $\mathcal V:\mathbb{K}[\hat{X_P}]\setminus\{0\}\to\mathbb{Q}^A$ }
associated to the $T$-equivariant Seshadri stratification $(X_\sigma,f_\sigma)_{\sigma\in A}$ and the fixed 
total order $>^t$ on $A$ is the map  defined by:
$$
g\mapsto\nu(g):=\min\{\mathcal V_{\mathfrak{C}}(g)\mid \mathfrak{C}\in \mathcal F_{\max}(A)\}.
$$
\end{definition}

\begin{corollary}
Let  $\nu$ be the quasi-valuation on $ \mathbb K[\hat X_P]$ defined in Definition~\ref{def:quasivaluation}
and let $\mathcal V$ be the quasi-valuation  $\mathbb K[\hat X_P]$ defined in \cite{CFL}.
For all $g\in \mathbb K[\hat X_P]\setminus\{0\}$ one has:
$\mathcal V(g)=\nu(g)$.
\end{corollary}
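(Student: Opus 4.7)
The plan is to observe that this corollary is essentially an immediate consequence of the corresponding equality for each individual maximal chain, combined with the fact that both quasi-valuations are defined as the minimum of the chain-wise valuations over the same set of maximal chains $\mathcal{F}_{\max}(A)$ and with respect to the same total order on $\mathbb{Q}^A$ induced by the linearization ``$>^t$''.

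More precisely, first I would recall Corollary~\ref{max:chain:valuation:equal}, which states that $\mathcal{V}_\mathfrak{C}(g) = \nu_\mathfrak{C}(g)$ for every $g \in \mathbb{K}[\hat X_P]\setminus\{0\}$ and every maximal chain $\mathfrak{C} \in \mathcal{F}_{\max}(A)$. That corollary in turn rests on the two key ingredients already established: Lemma~\ref{T-eigenfunction:equality}, handling the case of $\hat T$-eigenfunctions by relating the successive vanishing multiplicities to the coordinates in the basis $\{\mu_\sigma\}_{\sigma\in\mathfrak{C}}$ of $\hat M_\mathbb{Q}$, and Lemma~\ref{T-invariance:lemma:11}, reducing the general case to the eigenfunction case via a $\hat T$-twisting argument.

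Next, I would apply the definitions side by side: by Definition~\ref{def:quasivaluation},
\[
\nu(g) = \min\nolimits_{>^t}\{\nu_\mathfrak{C}(g) \mid \mathfrak{C} \in \mathcal{F}_{\max}(A)\},
\]
while by Definition~\ref{def:quasivaluation2},
\[
\mathcal{V}(g) = \min\nolimits_{>^t}\{\mathcal{V}_\mathfrak{C}(g) \mid \mathfrak{C} \in \mathcal{F}_{\max}(A)\}.
\]
Since the two sets $\{\nu_\mathfrak{C}(g)\}_{\mathfrak{C}}$ and $\{\mathcal{V}_\mathfrak{C}(g)\}_{\mathfrak{C}}$ coincide term by term by Corollary~\ref{max:chain:valuation:equal}, and since the minimum is taken in both cases with respect to the same lexicographic order on $\mathbb{Q}^A$ induced by the fixed linearization $>^t$ of the partial order on $A$, the two minima agree, yielding $\mathcal{V}(g) = \nu(g)$.

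There is essentially no obstacle here: all of the substantive work has already been done in Corollary~\ref{max:chain:valuation:equal} and the two lemmas feeding into it. The only thing to double-check is that the ambient totally ordered abelian group $\mathbb{Q}^A$ and its order (the lexicographic order induced by $>^t$) are the same in both definitions, which they are by construction. So the proof is a one-line deduction from the chain-wise equality.
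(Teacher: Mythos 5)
Your proof is correct and follows exactly the same route as the paper: it applies Corollary~\ref{max:chain:valuation:equal} to identify $\mathcal V_{\mathfrak C}(g)$ with $\nu_{\mathfrak C}(g)$ for each maximal chain $\mathfrak C$, and then observes that both quasi-valuations are the minimum of these chain-wise valuations with respect to the same order on $\mathbb Q^A$. The extra remarks recalling the two lemmas behind Corollary~\ref{max:chain:valuation:equal} are accurate but not needed for the deduction.
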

\begin{proof}
For $g\in \mathbb K[\hat X_P]\setminus\{0\}$ one has by definition and by Corollary~\ref{max:chain:valuation:equal}:
$$
\mathcal V(g)=\min\{\mathcal V_{\mathfrak{C}}(g)\mid \mathfrak{C}\in \mathcal F_{\max}(A)\}
=\min\{\nu_{\mathfrak{C}}(g)\mid \mathfrak{C}\in \mathcal F_{\max}(A)\}=\nu(g).
$$
\end{proof}

\end{document}